\newtheorem{theorem}{Theorem}[section]
\newtheorem{lemma}[theorem]{Lemma}
\newtheorem{proposition}[theorem]{Proposition}
\newtheorem{corollary}[theorem]{Corollary}
\newtheorem{definition}[theorem]{Definition}
\newtheorem{remark}[theorem]{Remark}
\theoremstyle{definition}
\begin{document}
\title{The coarse Baum--Connes conjecture for certain relative expanders
\thanks{The second author is partially supported by NSFC (No. 11831006, 12171156) and the Science and Technology Commission of Shanghai Municipality (No. 22DZ2229014). The third author is supported by NSF (No. 1700021, 2000082, 2247313), and the Simons Fellows Program.}
		 }
\author{Jintao Deng, Qin Wang, Guoliang Yu}
\date{}	\maketitle

\begin{abstract}
Let $\left( 1\to N_m\to G_m\to Q_m\to 1 \right)_{m\in \mathbb{N}}$ be a sequence of extensions of finite groups such that their coarse disjoint unions have bounded geometry. In this paper, we show that if the coarse disjoint unions of $\left( N_m \right)_{m\in \mathbb{N}} $ and
$\left( Q_m \right)_{m\in \mathbb{N}} $ are coarsely embeddable into Hilbert space, then the coarse Baum--Connes conjecture holds for the coarse disjoint union of $\left( G_m \right)_{m\in \mathbb{N}}$.

As an application, the coarse Baum--Connes conjecture holds for the relative expanders constructed by G. Arzhantseva and R. Tessera, and the special box spaces of free groups discovered by T. Delabie and A. Khukhro, which do not coarsely embed into Hilbert space, yet do not contain a weakly embedded expander. This enlarges the class of metric spaces known to satisfy the coarse Baum--Connes conjecture. In particular, it solves an open problem raised by G. Arzhantseva and R. Tessera on the coarse Baum--Connes conjecture for relative expanders.
\end{abstract}
\tableofcontents
\section{Introduction}

The coarse Baum--Connes conjecture is a geometric analogue of the Baum--Connes conjecture which has important applications to geometry, topology and analysis. More precisely, it states that the Baum--Connes assembly map
$$\mu: \lim_{d\to \infty} K_*(P_d(X))\to K_*(C^*(X))$$
for a metric space $X$ is an isomorphism, where on the left hand side $\lim\limits_{d\to \infty} K_*(P_d(X))$ is the limit of the locally finite $K$-homology group of the Rips complexes of $X$, while on the right hand side $K_*(C^*(X))$ is the $K$-theory group
of the Roe algebra of $X$.  The left hand side of this conjecture is local and therefore computable, while the right hand side is global and is the receptacle of higher indices of elliptic operators.
A positive answer to this conjecture would provide a complete solution to the problem of computing $K$-theoretic indices for elliptic operators on non-compact spaces. In particular, it implies the Novikov conjecture on homotopy
invariance of higher signatures for closed manifolds when $X$ is a finitely generated group (e.g. the fundamental group of a closed manifold) equipped with a word length metric, and the Gromov's zero-in-the-spectrum conjecture and the positive scalar curvature conjecture when $X$ is a Riemannian manifold. See \cite{Yu2006ICM} for a comprehensive survey for the coarse Baum--Connes conjecture, and
\cite{Braga-Chung-Li 2020,CWY13,Fukaya-Oguni 2012,Fukaya-Oguni 2015,Fukaya-Oguni 2016,Fukaya-Oguni 2020,Gong-Wu-Yu,OOY-09,Willett-Yu-Adv1,Willett-Yu-Book,Yu-98} for recent developments.


The notion of coarse embedding into Hilbert space was introduced by M. Gromov \cite[p. 211]{Grom93} in relation to the Novikov conjecture (1965). A metric space $X$ is said to admit a {\em coarse embedding into Hilbert space} if there exist a map
$f: X\to H$ from $X$ to a Hilbert space $H$, and two maps $\rho_1$ and $\rho_2$ from $[0, \infty)$ to $[0, \infty)$ such that
$\lim_{t\to \infty} \rho_1(t)=\infty$ and
$$\rho_1(d(x, x'))\leq \|f(x)-f(x')\|\leq \rho_2(d(x, x'))$$
for all $x,x'\in X$. The third author proved that the coarse Baum--Connes conjecture holds for all metric spaces with bounded geometry which admit a coarse embedding into Hilbert space \cite{Yu2000}.


\par
The main result of this paper is the following theorem.
\par
\begin{theorem}\label{thm:main result}
Let $\left( 1\to N_m\to G_m\to Q_m\to 1 \right)_{m\in \mathbb{N}}$ be a sequence of extensions of finite groups with uniformly finite generating subsets. If the coarse disjoint union of $\left( N_m \right)_{m\in \mathbb{N}} $ with the induced metric from the word metric of
$\left( G_m \right)_{m\in \mathbb{N}} $ and the coarse disjoint union of $\left( Q_m \right)_{m\in \mathbb{N}} $ with the quotient metric are
coarsely embeddable into Hilbert space, then the coarse Baum--Connes conjecture holds for the coarse disjoint union of $\left( G_m \right)_{m\in \mathbb{N}}$.
\end{theorem}
\par
We will say that a sequence of group extensions $\left( 1\to N_m\to G_m\to Q_m\to 1 \right)_{m\in \mathbb{N}}$ is a
{\em ``CE-by-CE'' extension}, or has the {\em ``CE-by-CE'' structure}, if it satisfies the assumptions in the above theorem. Similarly, we may talk about ``A-by-A'', ``A-by-CE'', ``CE-by-A'', and so on for obvious meanings, where ``A'' stands for property A introduced by the third author \cite{Yu2000}, which implies ``CE''.
\par
For a long time, weakly embedded expanders
(see Definition \ref{weakly embedded expander} below) were the only known obstruction for a metric space with bounded geometry to coarsely embed into Hilbert space. In \cite{Arzhantseva-Tessera 2015}, however, G. Arzhantseva and R. Tessera introduced a notion of
{\em relative expander} (see Definition \ref{relative expander} below) to construct the first sequence of finite Cayley graphs which does not coarsely embed into any $L^p$-space for
any $1\leq p< \infty$, nor into any uniformly convex Banach space, and yet does not admit any weakly embedded expander.
 In \cite{Delabie-Khukhro}, T. Delabie and A. Khukhro
construct a certain box space of a free group to answer in the affirmative an open problem asked in \cite{Arzhantseva-Tessera 2015}: does there exist a sequence of finite graphs with bounded degree and large girth which does not coarsely embed into Hilbert space and yet does not contain a weakly embedded expander? We observe that all these examples by G. Arzhantseva and R. Tessera, and T. Delabie and A. Khukhro, are sequences of extensions of finite groups which have the ``A-by-CE'' structure. Consequently, we have the following result, which in particular solves an open problem raised by G. Arzhantseva and R. Tessera in \cite[Section 8, Open Problems]{Arzhantseva-Tessera 2015}.
\par
\begin{corollary}
The coarse Baum--Connes conjecture holds for all relative expanders exhibited by G. Arzhantseva and R. Tessera, and the
special box spaces of free groups discovered by T. Delabie and A. Khukhro, which do not coarsely embed into Hilbert space, yet do not contain
a weakly embedded expander.
\end{corollary}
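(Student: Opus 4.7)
My plan is to deduce the corollary directly from Theorem \ref{main} by exhibiting, for each of the two families of examples, an explicit A-by-CE group extension structure on the underlying sequence of finite groups.

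For the relative expanders of Arzhantseva and Tessera, I would revisit their construction in \cite{Arzhantseva-Tessera 2015}: the finite Cayley graphs in question are realized as a sequence $(G_n)$ of finite groups obtained from diagonal-type quotients that intertwine an amenable ``base'' direction with a property-(T)-like ``twisting'' direction. This construction naturally produces a sequence of short exact sequences $1\to N_n\to G_n\to Q_n\to 1$ with uniformly finite generating sets. I would then check that the kernels $(N_n)$ arise as finite quotients of a fixed amenable (hence property A) finitely generated group, so the sequence has property A, and that the quotients $(Q_n)$ form a box space of an a-T-menable group, hence coarsely embed into Hilbert space.

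For the box spaces of free groups of Delabie and Khukhro, I would use the fact that their filtration $(N_k)$ of the free group $F$ is constructed by interlacing two filtrations, which supplies an intermediate chain $N_k \lhd K_k \lhd F$. The associated short exact sequence
\[
1 \to K_k/N_k \to F/N_k \to F/K_k \to 1
\]
then has quotient sequence $(F/K_k)$ identified with a box space of an a-T-menable quotient of $F$, hence coarsely embedding into Hilbert space, while the kernel sequence $(K_k/N_k)$ has uniformly bounded geometry from which property A for the sequence follows. Theorem \ref{main} then applies in both cases.

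The main obstacle will be verifying property A for the sequence of kernels with the \emph{induced} metric from $G_n$, since this metric can be strictly smaller than the intrinsic word metric on the kernel, and property A is not preserved under arbitrary metric coarsenings. I would resolve this by exploiting the specific structure of the two constructions: in each case one can arrange the generating sets so that the inclusion $N_n \hookrightarrow G_n$ is a coarse embedding with control functions independent of $n$, equivalently, that the intrinsic and induced metrics on $N_n$ are uniformly bi-Lipschitz equivalent. Once this comparison is established, property A on the abstract sequence of kernels transfers to property A in the induced metric, and the hypotheses of Theorem \ref{main} are met, giving the corollary immediately.
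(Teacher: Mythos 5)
Your overall reduction to Theorem \ref{main} via explicit A-by-CE structures is the same strategy as the paper, but there are two concrete problems with how you would verify the hypotheses.

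First, for the Delabie--Khukhro box spaces, you assert that the kernel sequence $(K_k/N_k)$ ``has uniformly bounded geometry from which property A for the sequence follows.'' This inference is false: uniform bounded geometry does not imply Property A (expander sequences have uniformly bounded geometry and fail Property A). The actual reason the kernel sequence has Property A is an \emph{asymptotic faithfulness} argument: because both quotient maps $F\twoheadrightarrow F/K_k$ and $F\twoheadrightarrow F/N_k$ are asymptotically faithful, so is $F/N_k\twoheadrightarrow F/K_k$, and this forces the existence of $r_k\to\infty$ so that any two distinct points of $K_k/N_k$, in the metric induced from $F/N_k$, are at distance at least $r_k$. Hence the coarse disjoint union $\bigsqcup_k K_k/N_k$ has asymptotic dimension $0$, which gives Property A. Without this, your verification of the ``A'' hypothesis in Theorem \ref{main} does not close.

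Second, your proposed resolution of the induced-vs-intrinsic metric issue --- arranging for the inclusion $N_n\hookrightarrow G_n$ to have control functions independent of $n$, i.e.\ uniform bi-Lipschitz equivalence of the two metrics on $N_n$ --- is neither what the paper does nor something you can expect to hold. For the Arzhantseva--Tessera examples the paper instead observes that the kernels $(A/A_n)$ are quotients of a fixed finitely generated amenable group $A$, and invokes the fact that amenability of a group (hence uniform Property A of the sequence) does not depend on the choice of proper metric; no comparison between intrinsic and induced metrics is needed. For Delabie--Khukhro the induced metric on $K_k/N_k$ is precisely the one in which the asymptotic dimension $0$ phenomenon occurs; the intrinsic word metric plays no role, and a uniform bi-Lipschitz comparison would be both unavailable and beside the point. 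You should replace the bi-Lipschitz step with the metric-independence-of-amenability argument for the Arzhantseva--Tessera examples and the asymptotic-faithfulness/asymptotic-dimension-$0$ argument for Delabie--Khukhro.
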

\par

\par
The basic strategy of the proof of Theorem \ref{thm:main result} is to apply the index-theoretic localization technique on finite dimensional Hilbert spaces detailed in the book \cite[Chapter 12]{Willett-Yu-Book}, which is based on a result of Yu in \cite{Yu2000}, to the case of sequences of extensions of finite groups. We use the coarse embedding of the quotients $(Q_m)_{m\in \mathbb{N}}$ into a Hilbert space to
construct the twisted Roe algebra and the twisted localization algebra of $(G_m)_{m\in \mathbb{N}}$, and formulate the twisted coarse Baum--Connes conjecture. The cutting-and-pasting technique enables us to decompose these twisted algebras, so as to reduce the twisted coarse Baum--Connes conjecture for
$(G_m)_{m\in \mathbb{N}}$ to the coarse Baum--Connes conjecture with  coefficients for $(N_m)_{m\in \mathbb{N}}$, which in turn can be proved by  the coarse embeddability of the sequence $(N_m)_{m \in \mathbb{N}}$ into Hilbert space. Finally, the coarse Baum--Connes conjecture for $(G_m)_{m \in \mathbb{N}}$ follows from the twisted coarse Baum--Connes conjecture by the fact that the index maps from the $K$-theory of Roe algebras and localization algebras to the $K$-theory of the twisted Roe algebras and the twisted localization algebras are isomorphic.
\par

\par
The paper is organized as follows. In Section 2, we briefly review all the examples of relative expanders constructed
by G. Arzhantseva and R. Tessera, and a special box space of free group discovered by T. Delabie and A. Khukhro, which do not coarsely embed into Hilbert spaces and yet do not contain a weakly embedded expander. We indicate that all these examples have the "A-by-CE" structure. In Section 3, we recall the concepts of the Roe algebras, localization algebras, and the coarse Baum--Connes conjecture. In Section 4, we first introduce the twisted Roe algebras and twisted localization algebras for the sequence of extension groups with coefficients coming from the coarse embedding of the quotient groups into Hilbert space, then we introduce the index maps which are used to reduce coarse Baum--Connes conjecture to the twisted coarse Baum--Connes conjecture. In Section 5, we prove the twisted coarse Baum--Connes conjecture for the sequence of extensions. In Section 6, we complete the proof of the main result.


\section{A-by-CE group extension structure of relative expanders and box spaces}
In this section, we briefly review the recent discoveries of relative expanders due to G. Arzhantseva and R. Tessera \cite{Arzhantseva-Tessera 2015}, and certain box spaces of free groups due to T. Delabie and A. Khukhro \cite{Delabie-Khukhro}, which do not coarsely embed into Hilbert space yet contain no weakly embedded expanders. We observe that all these examples have the ``A-by-CE'' structure as sequences of finite group extensions.
\par
For a finite connected graph $X$ with $|X|$ vertices and a subset $A\subset X$, denote by $\partial A$ the set of edges between $A$ and
$X\backslash A$. The {\em Cheeger constant} of $X$ is defined as
$$h(X):= \min_{1\leq |A| \leq |X|/2} \frac{|\partial A|}{|A|}.$$
An {\em expander} is a sequence $(X_m)_{m\in \mathbb{N}}$ of finite connected graphs with uniformly bounded degree,
such that $|X_m|\to \infty$, and $h(X_m)\geq c$ uniformly over $m\in \mathbb{N}$ for some constant $c>0$.
\par
\begin{definition}[weakly embedded expander]\label{weakly embedded expander}
Let $(X_m)_{m\in \mathbb{N}}$ be an expander and let $Y$ be a discrete metric space with bounded geometry. A sequence of maps
$f_m: X_m\to Y$ is a {\em weak embedding of $(X_m)_{m\in \mathbb{N}}$ into $Y$} if there exists $D>0$ such that all $f_m$ are $D$-Lipschitz and
$$\lim_{m\to \infty} \; \sup_{x\in X_m} \; \frac{|f_m^{-1}(f_m(x))|}{|X_m|}=0.$$
\end{definition}
\par
Let us recall the concepts of semidirect product and wreath product of groups.
Let $H$ be a group,  let $Q$ be a group of automorphisms of $H$,
and let $K$ be a group such that there is a surjective homomorphism $\phi: K\twoheadrightarrow Q$. The {\em restricted semi-direct product}
$H\rtimes_Q K$ is the product $H\times K$ with the multiplication rule
$$(h_1, k_1)(h_2, k_2)=\big( h_1 \cdot \phi(k_1) h_2, \, k_1k_2 \big).$$
Note that a semi-direct product is a split extension. Let $H$ and $K$ be finitely generated groups,  and let $\phi: K\twoheadrightarrow Q$ be a surjective homomorphism from $K$ to a
countable discrete group $Q$.
The {\em restricted permutational wreath product of $H$ by $K$ through $Q$} is the semi-direct product
$$H\wr_Q K:= \left( \bigoplus_{Q}H \right) \rtimes K,$$
where $\bigoplus_Q H$ is the group of finitely supported functions $\xi: Q\to H$ with the pointwise multiplication, and
$K$ acts on $\bigoplus_Q H$ by permuting the indices by multiplications on the left via $\phi: K\twoheadrightarrow Q$.
\par

A well known obstruction for a metric space to coarsely embed into a Hilbert space is to admit a weakly embedded expander \cite{Grom03}.
A long standing open problem is: does a weakly embedded expander is the only obstruction to coarse embeddability into Hilbert space?
In \cite{Arzhantseva-Tessera 2015}, G. Arzhantseva and R. Tessera gave first examples of sequences of finite Cayley graphs
of uniformly bounded degree which do not coarsely embed into a Hilbert space but do not contain any weakly embedded expander.
\par
\begin{theorem}[\cite{Arzhantseva-Tessera 2015} Theorem 1 or 7.1]
There exist a finitely generated residually finite group $G$ and a box space $(Y_m)_{m\in \mathbb{N}}$ of $G$ which does not coarsely embed into any
$L^p$ space for $p\in [1.\infty)$, neither into any uniformly curved Banach space, and yet does not admit any sequence of weakly embedded expanders.
\end{theorem}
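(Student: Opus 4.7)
The plan is to exhibit $G$ as a finitely generated residually finite group of wreath-product type and to realize $(Y_n)$ as the box space associated to a carefully chosen residual chain. Following the Arzhantseva--Tessera framework, I would take $G = \mathbb{Z}_2 \wr_\Gamma \Lambda$ for finitely generated residually finite groups $\Gamma$ and $\Lambda$ equipped with a surjection $\Lambda \twoheadrightarrow \Gamma$, arranged so that the finite quotients $\Gamma_n$ of $\Gamma$ are Cayley graphs of Ramanujan type while the corresponding $\Lambda_n$ are finite-sheeted covers of $\Gamma_n$ whose box space coarsely embeds into Hilbert space. The box space $(Y_n)$ is then essentially $\mathbb{Z}_2 \wr_{\Gamma_n} \Lambda_n$, viewed as a Cayley graph in the natural generating set described in the preceding subsection.

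Next, I would establish non-embeddability into $L^p$ and into uniformly curved Banach spaces by proving a Poincar\'e-type inequality on $Y_n$. Concretely, one builds a probability measure on pairs $(g, g') \in Y_n \times Y_n$ of uniformly bounded word distance --- for instance, the increments of a one-step random walk that moves in the $\Lambda_n$-direction, flips a lamp, and returns --- and shows that for any $1$-Lipschitz map $f$ into the target Banach space the expected distortion of $f$ against this measure is bounded below independently of $n$. The lower bound is harvested by averaging the spectral gap of the Cayley graph of $\Gamma_n$ over lamp configurations, so that the expander inequality on $\Gamma_n$ gets transported through the wreath structure into a short-distance Poincar\'e inequality on $Y_n$. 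The upgrade from $L^p$ to arbitrary uniformly curved Banach targets proceeds via a renorming/nonlinear cotype argument (in the spirit of Pisier or Mendel--Naor), which reduces the relevant nonlinear inequality on a uniformly curved space to the $L^p$ case already handled.

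To rule out weakly embedded expanders in $(Y_n)$, I would argue by a dichotomy. Suppose $f_n: X_n \to Y_n$ weakly embeds an expander. Composing with the canonical quotient $\pi_n : Y_n \to \Lambda_n$, one obtains a sequence of maps from an expander into a CE box space; since coarse embeddability into Hilbert space forbids weakly embedded expanders, the composition $\pi_n \circ f_n$ must have large fibers, so a definite proportion of $f_n(X_n)$ concentrates in a bounded union of $\pi_n$-fibers. Each fiber of $\pi_n$ is a Hamming cube $\mathbb{Z}_2^{\Gamma_n}$, which bi-Lipschitzly embeds into Hilbert space and hence also absorbs no weakly embedded expander. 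Balancing these two absorptions against the weak-embedding condition $|f_n^{-1}(f_n(x))|/|X_n| \to 0$ yields the required contradiction.

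The main obstacle is the second step: constructing a Poincar\'e inequality strong enough to block every uniformly curved Banach target yet compatible with the absence of an honest weakly embedded expander inside $Y_n$. The delicate point is that the Ramanujan gap of $\Gamma_n$ must appear in $Y_n$ only as a \emph{relative} expansion phenomenon, spread across many $\Lambda_n$-translates of the lamp group, rather than as a literal embedded expander. Arranging the finite-sheeted covers $\Lambda_n \to \Gamma_n$ so that this happens --- and verifying that the resulting pair measure genuinely witnesses nonembeddability --- is the core technical content of the construction.
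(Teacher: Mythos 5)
Your proposal reconstructs the Arzhantseva--Tessera 2018/2019 wreath-product construction (Ramanujan quotients $\Gamma_n$, finite-sheeted covers $\Lambda_n$, lamp group $\mathbb{Z}_2$), which is the technical relative expander $W_n = \mathbb{Z}_2\wr_{G_n}H_n$ recalled in Remark~\ref{relative expander inside groups} of the present paper, rather than the construction the cited 2015 theorem actually uses. In Arzhantseva--Tessera 2015 the group $G$ is a semi-direct or generalized wreath product such as $\mathbb{Z}^2\rtimes_Q\mathbb{F}_3$ or $A\wr_Q\mathbb{F}_U$ with $Q$ an $\mathrm{SL}$-congruence kernel, and the expansion input is \emph{relative property (T)} of a pair in the \emph{ambient infinite group} (Burger for $(\mathrm{SL}(2,\mathbb{Z})\ltimes\mathbb{Z}^2,\mathbb{Z}^2)$, Chifan--Ioana for the generalized wreath products). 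Their Observation~2 then transports this to a relative Poincar\'e inequality uniformly over the finite quotients. Your version instead tries to manufacture the Poincar\'e inequality from the Ramanujan gap of the finite quotients directly via a pair measure. This is a genuinely different route: it localizes the spectral input to the finite graphs instead of deriving it from a single rigidity statement about $G$ itself.

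Two concrete gaps. First, the theorem claims non-embeddability into every $L^p$ ($1\le p<\infty$) and every uniformly curved Banach space, and this is exactly where your choice of engine is costly. The relative (T) route upgrades to Banach targets via the appropriate Banach strengthening of relative property (T) for the fixed pair inside $G$; your Ramanujan-gap route would require you to prove an $L^p$-Poincar\'e inequality and then a uniformly-curved one from scratch for each $n$, and the phrase ``via a renorming/nonlinear cotype argument'' does not close this --- you correctly flag it as the main obstacle, but as written it is a gap, not a reduction. Second, your dichotomy ruling out weakly embedded expanders is a sketch of Observation~1 (``CE-by-CE contains no weakly embedded expander''), but the pivotal step --- that failure of $\pi_n\circ f_n$ to be a weak embedding forces a positive fraction of $f_n(X_n)$ into a \emph{bounded} union of $\pi_n$-fibers, and that the restriction to such a sub-expander of positive density is itself (quantitatively) an expander mapped into the Hamming cube --- is asserted without justification. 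That is precisely the content of Arzhantseva--Tessera's Proposition~2, and it is not automatic: positive-density subsets of expanders need not be expanders, and one must use the Cheeger constant to pass to a large connected induced subgraph first. Citing or reproving Observation~1 would repair this; as it stands the argument has a hole.
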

\par
This result relies on two major observations. The first one says that there are no expanders weakly contained in  "CE-by-CE" group extensions (Proposition 2 in \cite{Arzhantseva-Tessera 2015}).
The second observation is a refined strengthening to an early observation of J. Roe \cite{Roe-03} that relative property (T), as opposite to the Haagerup property, leads to non-embeddability into Hilbert space. This is done by introducing a notion of {\em relative expander} in terms of relative Poincar\'{e} inequality \cite{Arzhantseva-Tessera 2015}.
\par
\begin{definition}[Relative expanders \cite{Arzhantseva-Tessera 2015}, 1.4]\label{relative expander}
Let $\big( G_m \big)_{m\in \mathbb{N}}$  be a sequence of finite groups with generating subsets $S_m$ such that $\sup_m |S_m|<\infty$, and let
$Y_m\subseteq G_m$ be an {unbounded sequence} of subsets of $G_m$, i.e. for any $R>0$ there exists $m$ such that
$Y_m$ is not contained entirely in the ball of radius $R$ around the identity element in $G_m$. Then the sequence of Cayley graphs
$(G_m, S_m)_{m\in \mathbb{N}}$ is said to be a {\em relative expander with respect to $(Y_m)$} if it satisfies the
"relative Poincar\'{e} inequality": there exists $C>0$ such that for every $m\in \mathbb{N}$, for every function
$f: G_m\to \mathcal{H}$  from $G_m$ to a Hilbert space $\mathcal{H}$, and for every $y\in Y_m$, one has
$$\sum_{g\in G_m} \| f(gy)-f(g)\|^2 \leq C \sum_{g\in G_m, s\in S_m} \|f(gs)-f(g)\|^2.$$
\end{definition}
\par
\begin{proposition}(\cite{Arzhantseva-Tessera 2015}, Proposition 3 and Corollary 1.1) \quad
Let $G$ be a finitely generated residually finite group with a finite generating subset $S$. If $G$ has relative property (T) with respect to an {\em infinite} subset $Y\subset G$, then for any filtration $(N_m)$ of $G$, with the quotient maps $\pi_m: G\to G/N_m$, the sequence of Cayley graphs $(G/N_m, \pi_m(S))$ is a relative expander with respect to the unbounded sequence $(\pi_m(Y))_m$. In particular, the box space
$\bigsqcup_n G/N_m$ does not coarsely embed into a Hilbert space.
\end{proposition}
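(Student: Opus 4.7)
The plan is to transfer the Hilbert-space Poincar\'{e} inequality supplied by relative property (T) of $G$ with respect to $Y$ down to each finite quotient $G/N_n$, via the right regular representation of the quotient. Given a function $f: G/N_n \to \mathcal{H}$, I would form the Hilbert space $\mathcal{H}_n := \ell^2(G/N_n) \otimes \mathcal{H}$, let $G$ act on it by the unitary representation $\pi(x) := \rho(\pi_n(x)) \otimes \mathrm{id}_{\mathcal{H}}$ where $\rho$ denotes the right regular representation of $G/N_n$, and associate to $f$ the vector $v_f := \sum_{\bar g \in G/N_n} \delta_{\bar g} \otimes f(\bar g)$. A direct computation yields
\[
\|(\pi(x)-1)v_f\|^2 \;=\; \sum_{\bar g \in G/N_n} \|f(\bar g\, \pi_n(x)) - f(\bar g)\|^2
\]
for every $x \in G$. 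Arzhantseva--Tessera's relative property (T), expressed in its Poincar\'{e} form, supplies a constant $C > 0$ (independent of the unitary representation) with $\|\pi(y)v - v\|^2 \le C \sum_{s \in S}\|\pi(s)v - v\|^2$ for all $y \in Y$ and every vector $v$; applied to $\pi$ and $v_f$, this is exactly the inequality of Definition~\ref{relative expander} for the Cayley graph $(G/N_n, \pi_n(S))$ and the elements of $\pi_n(Y)$.

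Next, I would verify that $(\pi_n(Y))_n$ is unbounded in the sense of Definition~\ref{relative expander}. Residual finiteness combined with the filtration property $\bigcap_n N_n = \{1\}$ and local finiteness of the Cayley graph forces the systoles $r_n := \min\{|z|_S : z \in N_n \setminus \{1\}\}$ to tend to infinity. Given $R>0$, pick $y \in Y$ with $|y|_S > R$, which is possible since $Y$ is infinite. For $n$ so large that $r_n > 2|y|_S$, the triangle inequality gives $|yz|_S \ge |z|_S - |y|_S > |y|_S$ for every $z \in N_n \setminus \{1\}$, hence $|\pi_n(y)|_{G/N_n} = |y|_S > R$. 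Thus $\pi_n(Y)$ exits every ball for $n$ large enough, which combined with the preceding paragraph establishes that $(G/N_n, \pi_n(S))$ is a relative expander with respect to $(\pi_n(Y))_n$.

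The ``in particular'' statement then follows by a standard averaging argument. If hypothetical coarse embeddings $f_n: G/N_n \to \mathcal{H}$ existed with control functions $\rho_1, \rho_2$, the edge Lipschitz bound $\|f_n(\bar g s) - f_n(\bar g)\| \le \rho_2(1)$ would make the right-hand side of the relative Poincar\'{e} inequality at most $C|S|\rho_2(1)^2\, |G/N_n|$, while choosing $y_n \in \pi_n(Y)$ with $|y_n|_{G/N_n} \to \infty$ makes the left-hand side at least $\rho_1(|y_n|_{G/N_n})^2\, |G/N_n|$. Dividing by $|G/N_n|$ and letting $n \to \infty$ contradicts properness of $\rho_1$.

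The hard part, I expect, is not any of the steps above but rather pinning down the correct formulation of relative property (T) for a subset $Y \subset G$ that is not a subgroup: one needs Arzhantseva--Tessera's Poincar\'{e}-inequality formulation (or must verify its equivalence with a more representation-theoretic definition) rather than the naive ``every almost invariant vector is almost $Y$-invariant'' phrasing. Once the Poincar\'{e} form is in hand, the rest of the argument is essentially a one-line computation.
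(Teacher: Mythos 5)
The paper itself does not prove this proposition: it is cited verbatim from Arzhantseva--Tessera's paper (their Proposition~3 and Corollary~1.1), so there is no internal proof to compare against. Your argument is, as far as I can tell, structurally identical to theirs, and the individual steps are correct.

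Specifically: the right-regular-representation device $\mathcal{H}_n = \ell^2(G/N_n)\otimes\mathcal{H}$ with the vector $v_f=\sum_{\bar g}\delta_{\bar g}\otimes f(\bar g)$ does give exactly $\|(\pi(x)-1)v_f\|^2 = \sum_{\bar g}\|f(\bar g\,\pi_n(x))-f(\bar g)\|^2$, so the representation-level Poincar\'e inequality pulls back to the graph-level one for every finite quotient with a constant $C$ uniform in $n$. Your systole argument ($r_n\to\infty$, then $|\pi_n(y)|_{G/N_n}=|y|_S$ for $n$ large) correctly verifies unboundedness of $(\pi_n(Y))_n$, and the averaging argument (divide the Poincar\'e inequality by $|G/N_n|$ and let $|y_n|\to\infty$) correctly rules out a coarse embedding. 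You are also right that the substantive content is the uniform ``Poincar\'e form'' of relative property (T) for a pair $(G,Y)$ with $Y$ a subset rather than a subgroup --- the assertion that there exists a single constant $C$ such that $\|\pi(y)v-v\|^2 \le C\sum_{s\in S}\|\pi(s)v-v\|^2$ holds for \emph{every} unitary representation $\pi$, every vector $v$, and every $y\in Y$. That uniform inequality is not automatic from the na\"ive Kazhdan-style phrasing and is precisely what Arzhantseva--Tessera extract from relative (T) in their Proposition~3; your proposal takes it as given, which is exactly the degree of black-boxing this cited statement intends. Given that input, the rest of your argument closes the loop correctly.
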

\par
In \cite{Arzhantseva-Tessera 2015}, G. Arzhantseva and R. Tessera provide three examples of relative expanders which are box spaces of certain
carefully defined semi-direct product groups. We observe that all these examples are "A-by-CE" sequence of extensions of finite groups.
\par
\mbox{}\noindent{\bf Example 1: a box space of $ \mathbb{Z}^2 \rtimes_Q \mathbb{F}_3 $}.
Consider the semi-direct product
$$G:= \mathbb{Z}^2 \rtimes_Q \mathbb{F}_3, $$
where $Q$ is the kernel of the surjection $\mathrm{SL}(2, \mathbb{Z})\twoheadrightarrow \mathrm{SL}(2, \mathbb{Z}_2)$, which is generated by $3$ elements.
Take a surjective homomorphism $\pi: \mathbb{F}_3\twoheadrightarrow Q$ mapping the standard generating subset $\{a, b, c\}$
of $\mathbb{F}_3$ onto some generating subset $\{\alpha, \beta, \gamma\}$  of $Q$, so that $\mathbb{F}_3$ acts on $\mathbb{Z}^2$ via the standard action of $Q$ on $\mathbb{Z}^2$.
Then the set $S:=\{(1, 0), (0, 1), a, b, c\}$ is a finite generating subset of $G$.
Define a sequence of semi-direct products (i.e. split extensions) of finite groups as follows:
$$G_m:= \Big( \mathbb{Z}_{2^m} \Big)^2 \rtimes_{Q_m} \Big( \mathbb{F}_3/\Gamma_{3m-3}(\mathbb{F}_3) \Big) $$
where, for all $m\geq 1$, $Q_m<\mathrm{SL}(2, \mathbb{Z}_{2^m})$ is the image of $Q<\mathrm{SL}(2, \mathbb{Z})$ under the quotient map
        $$\pi_{2^m}:\; \mathrm{SL}(2, \mathbb{Z})\twoheadrightarrow \mathrm{SL}(2, \mathbb{Z}_{2^m}),$$
and $\{\mathbb{F}_3/\Gamma_{3m-3}(\mathbb{F}_3)\}_{m\in \mathbb{N}}$ is a specific box space of the free group $\mathbb{F}_3$ constructed by G. Arzhantseva, E. Guentner, and J. \v{S}pakula \cite{AGS-2012}, and generalized by A. Khukhro \cite{Khukhro2014}, which coarsely embeds into a Hilbert space.
It is shown in \cite{Arzhantseva-Tessera 2015} that the sequence  of Cayley graphs $(G_m, S_m)_m$ is a box space of $G$ and a relative expander with respect to the unbounded sequence of subsets $\big( \mathbb{Z}_{2^m} \big)^2 \subset G_m$, $m\in \mathbb{N}$. Since $\mathbb{Z}^2$ is amenable, it is obvious that the sequence $(G_m)_{m\in \mathbb{N}}$ has "A-by-CE" structure.

\par
\mbox{}\noindent{\bf Example 2: a box space of $ \mathbb{Z} \wr_Q \mathbb{F}_U $}.
Consider the generalized wreath product
$$G:=\mathbb{Z}\wr_Q \mathbb{F}_U := \left( \bigoplus_{Q} \mathbb{Z} \right) \rtimes \mathbb{F}_U,$$
where
$Q:=\ker \big( \mathrm{SL}(3, \mathbb{Z}) \twoheadrightarrow \mathrm{SL}(3, \mathbb{Z}_2) \big)$, equipped with a finite generating subset $U$. Define a sequence of generalized wreath products of finite groups as follows:
$$G_m:=  \big( \mathbb{Z}/n\mathbb{Z} \big) \wr_{Q_m} \Big( \mathbb{F}_U/\Gamma_{8m-8}(\mathbb{F}_U) \Big)
            := \left( \bigoplus_{Q_m} \big( \mathbb{Z}/m\mathbb{Z} \big) \right)  \rtimes \Big( \mathbb{F}_U/\Gamma_{8m-8}(\mathbb{F}_U) \Big) $$
where, for all $m\geq 1$, $Q_m<\mathrm{SL}(3, \mathbb{Z}_{2^m})$ is the image of $Q<\mathrm{SL}(3, \mathbb{Z})$ under the quotient map
        $$\pi_{2^m}:\; \mathrm{SL}(3, \mathbb{Z})\twoheadrightarrow \mathrm{SL}(3, \mathbb{Z}_{2^m}),$$
and $\{\mathbb{F}_U/\Gamma_{8m-8}(\mathbb{F}_U)\}_{m\in \mathbb{N}}$ is an embeddable box space of G. Arzhantseva, E. Guentner, and J. \v{S}pakula \cite{AGS-2012}, generalized by A. Khukhro \cite{Khukhro2014}.
It is shown in \cite[Theorem 7.3]{Arzhantseva-Tessera 2015} that the sequence  of Cayley graphs $(G_m, S_m)_m$ is a box space of $G$ and a relative expander with respect to an unbounded sequence of subsets of $(G_m)_{m\in \mathbb{N}}$. Since $ \bigoplus_{Q} \mathbb{Z} $ is an amenable group, it is obvious that the sequence $(G_m)_{m\in \mathbb{N}}$ has "A-by-CE" structure.

\mbox{}\noindent{\bf Example 3: a box space of $\mathbb{Z}\wr_Q \mathbb{F}_3 $}.
Consider the generalized wreath product
$$G:=\mathbb{Z}\wr_Q \mathbb{F}_3 := \left( \bigoplus_{Q} \mathbb{Z} \right) \rtimes \mathbb{F}_3,$$
where $Q:=\ker \big( \mathrm{SL}(2, \mathbb{Z}) \twoheadrightarrow \mathrm{SL}(2, \mathbb{Z}_2) \big)$, equipped with a generating subset
$U=\{\alpha, \beta, \gamma\}$ as in Example 1.
Define a sequence of generalized wreath products of finite groups as follows:
$$G_m:=  \big( \mathbb{Z}/m\mathbb{Z} \big) \wr_{Q_m} \Big( \mathbb{F}_3/\Gamma_{3m-3}(\mathbb{F}_3) \Big)
            := \left( \bigoplus_{Q_m} \big(\mathbb{Z}/m\mathbb{Z} \big) \right)  \rtimes \Big( \mathbb{F}_3/\Gamma_{3m-3}(\mathbb{F}_3) \Big), $$
where $Q_m$ are as in Example 1. It is shown in \cite[Section 7.2]{Arzhantseva-Tessera 2015} that
the sequence  of Cayley graphs $(G_m, S_m)_m$ is a box space of $G$ and a relative expander with respect to an unbounded sequence of subsets of $(G_m)_{m\in \mathbb{N}}$. Since $ \bigoplus_{Q} \mathbb{Z} $ is an amenable group, it is obvious that the sequence $(G_m)_{m\in \mathbb{N}}$ has "A-by-CE" structure.

\mbox{}\noindent{\bf Box spaces of free groups \`{a} la T. Delabie and A. Khukhuro}. In \cite{Delabie-Khukhro} T. Delabie and A. Khukhro answer an open question asked by G. Arzhantseva and R. Tessera in
\cite[Section 8: Open Problems]{Arzhantseva-Tessera 2015}: Does there exist a sequence of finite graphs with bounded degree and large girth
that does not coarsely embed into a Hilbert space and yet has no weakly embedded expander (in particular, a box space of the free group $\mathbb{F}_m$)?
\par
\begin{theorem}[\cite{Delabie-Khukhro}]
There exists a filtration of the free group $\mathbb{F}_3$ such that the corresponding box space does not coarsely embed into a Hilbert space,
but does not admit a weakly embedded expander.
\end{theorem}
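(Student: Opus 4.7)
The plan is to realize the desired box space as an \emph{intermediate} quotient between the free group $\mathbb{F}_3$ and a carefully chosen ``A-by-CE'' relative expander of Arzhantseva--Tessera type, imitating their construction but pulling it back to $\mathbb{F}_3$ to acquire large girth. First, I would fix a finitely generated residually finite group $G$ of the form $N \rtimes_Q \mathbb{F}_3$ (as in Example 1 of Section 2.1, with $Q = \ker(\mathrm{SL}(2,\mathbb{Z}) \twoheadrightarrow \mathrm{SL}(2,\mathbb{Z}_2))$ and $N = \mathbb{Z}^2$), together with a surjection $\pi \colon \mathbb{F}_3 \twoheadrightarrow Q$ sending the free basis of $\mathbb{F}_3$ onto a generating set of $Q$. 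The pair $(G, N)$ has relative property $(\mathrm{T})$, and $G$ is a split A-by-CE extension in the sense of Theorem \ref{main}.

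Second, I would build a filtration $(K_n)_{n\in\mathbb{N}}$ of $\mathbb{F}_3$ as follows. Let $\Gamma_k(\mathbb{F}_3)$ denote the iterated ``squares'' subgroups from \cite[Lemma 2.3]{Arzhantseva-Tessera 2015}; each quotient $\mathbb{F}_3/\Gamma_k(\mathbb{F}_3)$ surjects onto every $2$-group of order at most $2^k$ generated by three elements, in particular onto the finite quotients $Q_n = \pi_{2^n}(Q) \subset \mathrm{SL}(2,\mathbb{Z}_{2^n})$ appearing in the Arzhantseva--Tessera construction. Choosing $k = k(n)$ large enough, the preimage of $Q_n$ under $\mathbb{F}_3/\Gamma_{k(n)}(\mathbb{F}_3) \twoheadrightarrow Q_n$ defines a finite-index normal subgroup $K_n \lhd \mathbb{F}_3$. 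By taking $k(n)$ to grow sufficiently fast relative to the diameter of $Q_n$, the Cayley graphs of $\mathbb{F}_3/K_n$ can be made to have girth tending to infinity; intersecting further with the terms of the derived or lower central series of $\mathbb{F}_3$ if necessary ensures $\bigcap_n K_n = \{e\}$ and that $K_n$ is cofinal with a standard filtration of $\mathbb{F}_3$.

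Third, I would check the two conclusions. For non-embeddability, $\mathbb{F}_3/K_n$ surjects onto a quotient $G'_n$ of $G$ that contains $\pi_n(N)$ as an unbounded subset, and the relative property $(\mathrm{T})$ of $(G,N)$ passes to this unbounded sequence of subsets (via Observation 2 of Arzhantseva--Tessera); the resulting relative Poincar\'e inequality precludes coarse embeddability of the box space into Hilbert space. For the absence of weakly embedded expanders, I would verify that each $\mathbb{F}_3/K_n$ is itself an A-by-CE extension: the kernel of $\mathbb{F}_3/K_n \twoheadrightarrow \mathbb{F}_3/\Gamma_{k(n)}(\mathbb{F}_3) \to Q_n$ is a quotient of $N$-like abelian sections (hence uniformly has Property A as subsets of $(\mathbb{F}_3/K_n)$), while the quotient coarsely embeds into Hilbert space by the Arzhantseva--Guentner--\v{S}pakula theorem \cite{AGS-2012} applied to the box space $\{\mathbb{F}_3/\Gamma_{k(n)}(\mathbb{F}_3)\}$; Observation 1 then rules out weakly embedded expanders.

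The main obstacle is the compatibility in step two: the A-by-CE decomposition on the Arzhantseva--Tessera group $G_n$ relies on an abelian kernel $N$ whose relators typically produce very short cycles in any Cayley graph lifting, which is directly in tension with the large-girth requirement for box spaces of free groups. Reconciling these two demands forces one to push the abelian relations deep into the filtration (so that the shortest cycles reflecting the $\mathbb{Z}^2$-structure occur only at scales going to infinity with $n$), while still arranging that the induced metric on the kernel inside $\mathbb{F}_3/K_n$ remains uniformly Property A and the quotient metric stays uniformly coarsely embeddable; the delicate choice of $k(n)$ and of auxiliary intersections with commutator subgroups is where the real work lies.
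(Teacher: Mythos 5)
Your proposal does not follow the Delabie--Khukhro construction and contains a gap that you flag at the end but cannot repair within your framework. The basic difficulty is that there is no map along which to transport relative property $(\mathrm{T})$: the Arzhantseva--Tessera group $G=\mathbb{Z}^2\rtimes_Q\mathbb{F}_3$ is generated by five elements and is \emph{not} a quotient of $\mathbb{F}_3$, so no quotient $\mathbb{F}_3/K_n$ contains an image of the $\mathbb{Z}^2$ factor whose unboundedness the relative Poincar\'e inequality would need. What $\mathbb{F}_3$ naturally surjects onto is $Q$ itself, and the box space of the congruence quotients $Q_n$ is an expander (Selberg), which is the opposite of what is needed. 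Even setting that aside, there is an irreconcilable tension you correctly sense but cannot fix by ``pushing relations deep into the filtration'': a relative Poincar\'e inequality with a constant $C$ uniform in $n$ forces the relations witnessing it (e.g.\ the commutator $[s_1,s_2]=1$ in $\mathbb{Z}^2$) to hold at bounded scale in every $\mathbb{F}_3/K_n$, hence bounded-length cycles; but for any filtration $(K_n)$ of $\mathbb{F}_3$ with $\bigcap_n K_n=\{e\}$, the girth of $\mathbb{F}_3/K_n$ tends to infinity. If instead the scale of the witnessing relations grows with $n$, the Poincar\'e constant degrades and the inequality is no longer uniform, which is exactly the uniformity that makes a relative expander a relative expander.

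Delabie and Khukhro avoid relative $(\mathrm{T})$ altogether. They start from an expander filtration $(N_i)$ of $\mathbb{F}_3$ and from the $q$-homology cover filtration $(\Theta(N_i))$ with $\Theta(N_i)<N_i$, for which $\Box_{\Theta(N_i)}\mathbb{F}_3$ coarsely embeds into Hilbert space (Khukhro), form the two-parameter family of quotients $\mathbb{F}_3/(N_n\cap\Theta(N_k))$, and choose a diagonal $(n_i,k_i)$ drifting away from the expander row slowly enough. Non-embeddability is detected by exhibiting a \emph{generalized} expander in the sense of Tessera's characterization, a weaker certificate than a relative expander that, crucially, is compatible with girth tending to infinity. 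The absence of weakly embedded expanders follows from the A-by-CE structure of the extensions $1\to\Theta(N_{k_i})/(N_{n_i}\cap\Theta(N_{k_i}))\to\mathbb{F}_3/(N_{n_i}\cap\Theta(N_{k_i}))\to\mathbb{F}_3/\Theta(N_{k_i})\to 1$, where Property A of the kernel sequence comes from \emph{asymptotic faithfulness} of the quotient maps (the kernels form a coarse disjoint union of asymptotic dimension $0$), not from any abelian or semidirect-product structure. So both ingredients of the actual proof, the non-embeddability certificate and the Property A of the kernels, come from mechanisms quite different from the ones you propose.
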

\par
The overall structure of the proof is as follows. They first construct a particular sequence of nested finite index normal subgroups $\{N_i\}$ of $\mathbb{F}_3$ with trial intersection so that the corresponding box space is an expander, and consider the sequence of $q$-homology covers, where
$q$ is a certain prime number,  of the quotients $\{ \mathbb{F}_3/N_i\}$: this gives rise to another sequence of subgroups $\Theta(N_i) < N_i$ of $\mathbb{F}_3$ such that the corresponding box space $\Box_{\Theta(N_i)} \mathbb{F}_3$ coarsely embeds into a Hilbert space, following a previous work of A. Khukhro \cite{Khukhro2014}.
They find an ingenuous way to choose a subsequence of quotients
$\Big\{ \mathbb{F}_3/\big( N_{n_i} \cap \Theta(N_{k_i}) \Big\}$
which lie on some path that moves sufficiently slowly away the horizontal expander sequence in the "triangle" of intersections.
They show \cite[Corollary 4.4]{Delabie-Khukhro} that there exist increasing sequences $k_i$ and $n_i$ such that the box space
$\Box_{N_{n_i}\cap \Theta(N_{k_i})} \mathbb{F}_3$
contains a generalized expander and therefore does not coarsely embed into Hilbert space, by a characterization of R. Tessera \cite{Tessera}.
\par
On the other hand,  T. Delabie and A. Khukhro have essentially proved the following general fact \cite[Proposition 2.4]{Delabie-Khukhro}: Let $G$ be a finitely generated residually finite group with two filtrations $\{N_i\}$ and $\{M_i\}$ such that $M_i<N_i$ for all $i\in \mathbb{N}$.
Consider the sequence of group extensions
$$\Big( 1\longrightarrow N_i/M_i \longrightarrow G/M_i \longrightarrow G/N_i \longrightarrow 1 \Big)_{i\in \mathbb{N}} ,$$
where $G/M_i$ and $G/N_i$ are considered with the metric induced by the restriction of the respective box space metrics, and $N_i/M_i$ is considered with the metric induced by viewing $N_i/M_i$ as a subspace of $G/M_i$. Then the quotient maps
$$G\twoheadrightarrow G/M_n; \quad\quad G\twoheadrightarrow G/N_n$$
are asymptotically faithful, so that the quotient maps
$$\pi_n: G/M_n \twoheadrightarrow G/N_n$$
is asymptotically faithful as well. It follows that there exists a sequence $r_n\to \infty$ such that any two points of $N_n/M_n$ are at distance at least $r_n$ from each other. In other words, the asymptotic dimension of the coarse disjoint union $\bigsqcup_i N_i/M_i$ is $0$, which implies property A. Thus, if $\{G/N_i\}_i$ coarsely embeds into a Hilbert space, then the sequence $\{G/M_i\}_i$ has "A-by-CE" structure, so that it does not contain weakly embedded expanders. Applied to the magic triangle in \cite{Delabie-Khukhro}, by our main Theorem \ref{thm:main result} in this paper,
we actually have the following
\par
\begin{corollary}
For {\em all} increasing sequences $k_i$ and $n_i$ of $\mathbb{N}$, the corresponding sequence of extensions of finite groups in the
Delabie--Khukhro magic triangle:
$$\Big( 1\longrightarrow \Theta(N_{k_i})/\big( N_{n_i} \cap \Theta(N_{k_i}) \big)
            \longrightarrow  \mathbb{F}_3/\big( N_{n_i} \cap \Theta(N_{k_i}) \big)
         \longrightarrow \mathbb{F}_3/\Theta(N_{k_i}) \longrightarrow 1 \Big)_{i\in \mathbb{N}} $$
has "A-by-CE" structure. Therefore, the coarse Baum--Connes conjecture holds for the box space
$$\Box_{N_{n_i}\cap \Theta(N_{k_i})} \mathbb{F}_3  \, := \,  \bigsqcup_{i\in \mathbb{N}} \mathbb{F}_3/\big( N_{n_i} \cap \Theta(N_{k_i}) \big) $$
for {\em all} increasing sequences $k_i$ and $n_i$ of $\mathbb{N}$. In particular, the coarse Baum-Connes conjecture holds for the specific box space $\Big\{ \mathbb{F}_3/\big( N_{n_i} \cap \Theta(N_{k_i}) \Big\}_i$ of $\mathbb{F}_3$ by T. Delabie and A. Khukhro in \cite{Delabie-Khukhro}, which does not coarsely embed into Hilbert space yet does not contain a weakly embedded expander.
\end{corollary}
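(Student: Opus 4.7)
The plan is to verify the two conditions of Theorem \ref{main} for the sequence of extensions in the magic triangle, namely that the kernels carry an induced metric with property A and that the quotients coarsely embed into Hilbert space, and then invoke the main theorem to conclude the coarse Baum-Connes conjecture for the box space.

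For the quotient side, given any increasing sequence $k_i$, I would use the result of A. Khukhro \cite{Khukhro2014} that is recalled in the discussion preceding the corollary: the $q$-homology cover construction $\Theta(\cdot)$ is set up precisely so that the box space $\Box_{\Theta(N_i)} \mathbb{F}_3 = \bigsqcup_i \mathbb{F}_3/\Theta(N_i)$ coarsely embeds into Hilbert space. Passing to any subsequence indexed by $k_i$ preserves coarse embeddability, so $(\mathbb{F}_3/\Theta(N_{k_i}))_{i\in\mathbb{N}}$, equipped with the quotient metric coming from $\mathbb{F}_3$, coarsely embeds into Hilbert space. This takes care of the CE hypothesis.

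For the kernel side, I would apply the general observation of Delabie-Khukhro \cite[Proposition 2.4]{Delabie-Khukhro} recalled in the excerpt: for two filtrations $M_i < N_i$ of a finitely generated residually finite group $G$, the induced quotient map $\pi_i: G/M_i \twoheadrightarrow G/N_i$ is asymptotically faithful, and consequently any two distinct points of the kernel $N_i/M_i$ (viewed inside $G/M_i$ with its box-space metric) must be at distance at least $r_i$ for some sequence $r_i\to\infty$. Specializing to $G=\mathbb{F}_3$, $N_i=\Theta(N_{k_i})$, and $M_i = N_{n_i}\cap \Theta(N_{k_i})$ (which is indeed a filtration since both $(N_{n_i})$ and $(\Theta(N_{k_i}))$ are filtrations with trivial intersection, and $M_i \subset N_i$ by construction), the induced metric on $\Theta(N_{k_i})/(N_{n_i}\cap \Theta(N_{k_i}))$ coming from $\mathbb{F}_3/(N_{n_i}\cap \Theta(N_{k_i}))$ makes the coarse disjoint union of these kernels into a space of asymptotic dimension zero. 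Since asymptotic dimension zero implies property A, the sequence of kernels satisfies the property A hypothesis uniformly.

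Combining the two steps, the sequence of extensions
\[
1 \longrightarrow \Theta(N_{k_i})/\bigl(N_{n_i}\cap\Theta(N_{k_i})\bigr) \longrightarrow \mathbb{F}_3/\bigl(N_{n_i}\cap\Theta(N_{k_i})\bigr) \longrightarrow \mathbb{F}_3/\Theta(N_{k_i}) \longrightarrow 1
\]
has the A-by-CE structure, so Theorem \ref{main} applies and yields the coarse Baum-Connes conjecture for the box space $\Box_{N_{n_i}\cap\Theta(N_{k_i})}\mathbb{F}_3$. The only delicate point in this plan, and the place where I would check details carefully, is making sure the metrics match the hypotheses of Theorem \ref{main}: one must verify that the word metrics on the finite quotients $\mathbb{F}_3/(N_{n_i}\cap \Theta(N_{k_i}))$ induced by a fixed finite generating set of $\mathbb{F}_3$ give exactly the subspace metric on the kernels and the quotient metric on $\mathbb{F}_3/\Theta(N_{k_i})$ used in the Delabie-Khukhro asymptotic faithfulness statement. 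Since these are the standard box-space metrics, this is routine but is the only substantive point beyond quoting the results assembled above.
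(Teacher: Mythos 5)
Your proposal is correct and follows essentially the same route as the paper: you invoke Khukhro's coarse embeddability of the box space $\Box_{\Theta(N_i)}\mathbb{F}_3$ for the quotient side, Delabie-Khukhro's asymptotic faithfulness (Proposition 2.4) to get asymptotic dimension zero and hence property A for the kernel side, and then apply Theorem \ref{main}. The concluding remark about checking that the box-space metrics match the hypotheses is a sensible caution but does not add or change anything relative to the paper's argument.
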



\section{The coarse Baum--Connes conjecture}
In this section, we shall recall the concepts of Roe algebras, localization algebras and the coarse Baum--Connes conjecture.

\subsection{The Roe algebras and the coarse assembly map}


Recall that a metric space is proper if every closed bounded subset is compact. Let $Z$ be a proper metric space. An ample $Z$-module is a separable Hilbert space $H_Z$ equipped with a faithful and non-degenerate $*$-representation of $C_0(Z)$ whose range contains no nonzero compact operators, where $C_0(Z)$ is the algebra of all complex-valued continuous functions on $Z$ vanishing at infinity.

\begin{definition}
Let $Z$ be a proper metric space and $H_{Z}$ an ample $Z$-module.
\begin{enumerate}[(1)]
	\item The support of a bounded linear operator $T: H_{Z} \to H_{Z}$ is defined to be the set of points $(x, y) \in Z\times Z$ for which there exist $f, g \in C_0(Z)$ with $f(x)\neq 0$ and $g(y) \neq 0$ such that $fTg \neq 0$. The support of $T$ is denoted by $\mbox{Supp}(T)$.
	\item For a bounded linear operator $T: H_{Z} \to H_{Z}$, the $Z$-propagation of $T$ is defined by
	$${\rm Prop}_{Z}(T):=\sup\left\{d(x, y): (x, y)\in \mbox{Supp}(T)\right\}.$$
	An operator $T$ is said to have finite $Z$-propagation (or finite propagation) if ${\rm Prop}_{Z}(T)<\infty$.
	\item An operator $T$ is locally compact if the operators $fT$ and $Tf$ are compact operators on $H_{Z}$ for all $f \in C_0(Z)$.
\end{enumerate}
\end{definition}

In the case where $Z$ is a countable discrete metric space, we can choose a specific ample $Z$-module as follows. Let $H_0$ be a separable infinite-dimensional Hilbert space. Then the Hilbert space $\ell^2(Z)\otimes H_0$ is an ample $Z$-module via the representation given by the formula
$$f\cdot (\xi \otimes v)=(f\xi) \otimes v$$
for all $f \in C_0(Z)$, $\xi\in \ell^2(Z)$ and $v \in H_0$.

Since $\ell^2(Z)\otimes H_0=\bigoplus_{x \in Z}\mathbb{C}\cdot\delta_x\otimes H_0$, we can express each bounded operator $T \in B(\ell^2(Z)\otimes H_0)$ as a $Z$-by-$Z$ matrix,
$$T=\left(T_{x, y}\right)_{x,y\in Z}$$
where $T_{x,y}$ is a bounded linear operator on $H_0$. We have that
$${\rm Prop}_{Z}(T)=\sup\left\{d(x,y):T_{x,y}\neq 0\right\}.$$
If $T$ is locally compact, then $T_{x,y}$ is a compact operator on $H_0$ for all $x, y \in Z$.

\begin{definition}\cite{Roe-93}
Let $Z$ be a proper metric space and $H_{Z}$ an ample $Z$-module.
\begin{enumerate}[(1)]
    \item The algebraic Roe algebra, denoted by $\mathbb{C}[Z;H_{Z}]$, is the $*$-algebra of all locally compact, finite propagation operators on $H_{Z}$.
    \item The Roe algebra, denoted by $C^*(Z; H_{Z})$, is defined to be the completion of $\mathbb{C}[Z; H_{Z}]$ under the operator norm on $H_{Z}$.
\end{enumerate}
\end{definition}
Let $Z_1$ and $Z_2$ be proper metric spaces. A map $f:Z_1\rightarrow Z_2$ is said to be a coarse embedding if there exist two non-decreasing functions $\rho_1,\rho_2:[0,\infty)\rightarrow [0,\infty)$ such that
\begin{enumerate}[(1)]
	\item $\lim\limits_{t\to \infty}\rho_i(t)=\infty$ for $i=1,2$;
	\item $\rho_1(d(x,y))\leq d(f(x),f(y))\leq \rho_2(d(x,y))$, for all $x,y\in Z_1$.
\end{enumerate}
A metric space $Z_1$ is said to be coarsely equivalent to another metric space $Z_2$, if there exist a coarse embedding $f:Z_1\rightarrow Z_2$ such that $Z_2$ is equal to the $C$-neighborhood of the image $f(Z_1)$ for some $C>0$.


Let $f: Z_1\rightarrow Z_2$ be a coarse embedding. Let $\{U_i\}_{i\in I}$ be a Borel cover of $Z_2$ with the following properties:
\begin{enumerate}[(1)]
	\item  $\{U_i\}_{i\in I}$ is mutually disjoint;
	\item  each $U_i$ has non-empty interior;
	\item  for any compact $K\subseteq Z_{2}$, the set $\left\{i\in I|U_i\cap K=\emptyset\right\}$ is finite;
	\item  the diameter of $U_i$ is uniformly bounded over $i\in I$.
\end{enumerate}
    Note that the $*$-representation of $C_0(Z_2)$ on $H_{Z_2}$ extends to a $*$-representation of the algebra of all bounded Borel functions on $Z_2$. Since the range of $C_0(Z_2)$ contains no non-zero compact operators,  $\chi_{U_i}H_{Z_2}$ is an infinite dimensional subspace of $H_{Z_2}$ for each $i$, where $\chi_{U_i}$ is the projection corresponding to the characteristic function on $U_i$.
Hence, for each $i\in I$, there always exists an isometry $$V_i:\chi_{f^{-1}(U_i)}H_{\Gamma_{1}} \rightarrow \chi_{U_i}H_{Z_2}.$$
Consequently, we obtain an isometry $$V=\mathop\oplus\limits_{i\in I}V_i: H_{Z_1}=\oplus_i\chi_{f^{-1}(U_i)}H_{Z_1} \rightarrow H_{Z_2}=\oplus_i\chi_{U_i}H_{Z_2},$$
and a $*$-homomorphism
$$
\mbox{Ad}V: \mathbb{C}[Z_1;H_{Z_1}] \to \mathbb{C}[Z_2;H_{Z_2}]
$$
given by $T\mapsto VTV^*$.

When $f:Z\rightarrow Z$ is the identity map, the  isometry $V$ can be chosen as a unitary. It follows that  the definition of $\mathbb{C}[Z; H_Z]$ does not depend on the choice of $H_{Z}$ up to $*$-isomorphisms. Thus, we also denote by $\mathbb{C}[Z]$ and $C^*(Z)$ the algebraic Roe algebra and Roe algebra of a proper metric space $Z$, respectively.

Accordingly, we obtain a homomorphism
$$\mbox{Ad}V_*: K_*(C^*(Z_1))\to K_*(C^*(Z_2))$$
on $K$-theory induced by $\mbox{Ad}V$, where $V:H_{Z_1} \to H_{Z_2}$ is induced by the coarse embedding $f: Z_1 \to Z_2$. The map induced by $\mbox{Ad}V$ on $K$-theory does not depend on the choice of $V$. As above, if $f$ is a coarse equivalence, the isometry $V$ can be chosen as a unitary.

\begin{remark}\label{Rem_map_indu_by coar_map}
Let $Z_1$ and $Z_2$ be countable discrete metric spaces and $f: Z_1 \rightarrow Z_2$ an injective coarse map. Then we have an explicit construction of the homomorphism between the algebraic Roe algebras:
$$\mathbb{C}[Z_1] \rightarrow \mathbb{C}[Z_2]
$$
by
\[
  \left(\mbox{Ad}V(T)\right)_{y,y'} =
  \begin{cases}
        T_{x,x'}, & \text{\mbox{if} $y=f(x)$, $y'=f(y)$;} \\
        0, & \text{\mbox{otherwise}.}
  \end{cases}
\]

\end{remark}

Let us briefly recall the definition of analytic $K$-homology groups. More details can also be found in \cite{Willett-Yu-Book}.
Let $Z$ be a proper metric space. The $K$-homology $K_0(Z)$ and $K_1(Z)$ are groups generated by certain cycles modulo a certain equivalence relation.
\begin{enumerate}[(1)]
    \item A cycle for $K_0(Z)$ is a pair $(H_Z, F)$ where $H_Z$ is a $Z$-module and $F$ is a bounded linear operator on $H_Z$ such that $f(F^*F-1)$, $f(FF^*-1)$ and $fF-Ff$ are compact operators for all $f \in C_0(Z)$;
    \item A cycle for $K_1(Z)$ is a pair $(H_Z, F)$ where $H_Z$ is a $Z$-module and $F$ is a bounded self-adjoint operator on $H_Z$ such that $f(F^2-1)$ and $fF-Ff$ are compact operators for all $f \in C_0(Z)$.
\end{enumerate}

Note that the $Z$-modules can be chosen to be ample in the above description of cycles.

Next, we shall recall the assembly map
 $$\mu: K_i(Z) \to K_i(C^*(Z)).$$
An open cover $\{U_j\}_{j\in J}$ of $Z$ is locally finite if every point $x\in Z$ is contained in only finitely many elements of the cover $\{U_j\}_{j\in J}$.
Let $\{U_j\}_{j\in J}$ be a locally finite and uniformly bounded open cover of $X$. Let $\{\phi_j\}_{j\in J}$ be a partition of unity subordinate to the open cover $\{U_j\}_{j\in J}$. Let $(H_Z,F)$ be a cycle for $K_0(Z)$ such that $H_Z$ is an ample $Z$-module. Define
  $$F'=\sum_j \phi _j ^{\frac{1}{2}}F\phi _j^{\frac{1}{2}},$$
where the infinite sum converges in the strong operator topology. Note that the cycle
$(H_Z,F')$ is equivalent to $(H_Z,F)$ via the homotopy $(H_Z,(1-t)F+tF')$, where $t\in[0,1]$. Note that $F'$ has finite propagation, so that $F'$ is a multiplier of $C^*(Z)$. It is obvious that $F'^2-1 \in C^*(Z)$. Thus, $F'$
is invertible modulo $C^*(Z)$. Hence $F'$ gives rise to an element, denoted by $\partial{[F']}$, in $K_0(C^*(Z))$, where $$\partial:K_1(M(C^*(Z))/C^*(Z))\rightarrow K_0(C^*(Z))$$ is the boundary map in $K$-theory, and $M(C^*(X))$ is the multiplier algebra of $C^*(Z)$. We define {\em the coarse assembly map} $\mu$ from $K_0(Z)$ to $K_0(C^*(Z))$ by
 $$\mu([H_Z,F])=\partial{[F']}.$$
Similarly, we can define the coarse assembly map from $K_1(Z)$ to $K_1(C^*(Z))$.

\subsection{The coarse Baum--Connes conjecture for a discrete metric space with bounded geometry}

Recall that a proper, discrete metric space $X$ is said to have bounded geometry if for any $r>0$ there exists $N>0$ such that any ball of radius $r$ in $X$ contains
at most $N$ elements.
\begin{definition}
Let $X$ be a metric space with bounded geometry. For each $d\geq 0$, the Rips complex $P_d(X)$ at scale $d$ is defined to be the simplicial complex where the set of vertices is $X$, and a finite subset $\{x_0, x_1, \cdots, x_n\} \subseteq X$ spans a simplex if and only if $d(x_i, x_j) \leq d$ for all $0\leq i, j \leq n.$
\end{definition}
The Rips complex $P_d(X)$ is equipped with the spherical metric that is the maximal metric whose restriction to each simplex $\left\{\sum_{i=0}^n t_i x_i: t_i\geq 0, \sum_it_i=1\right\}$ is the metric obtained by identifying the simplex with $S_+^n$ via the map:
$$\sum_{i=0}^n t_i x_i \rightarrow \left(\frac{t_0}{\sqrt{\sum_{i=0}^n t_i^2}}, \cdots, \frac{t_n}{\sqrt{\sum_{i=0}^n t_i^2}}\right),$$
where $S_+^n=\left\{(x_1,\cdots,x_n) \in \mathbb{R}^{n+1}: x_i \geq 0, \sum_{i=0}^n x_i^2=1 \right\}$ endowed with the standard Riemannian metric. The distance between a pair of points in different connected components is defined to be infinity.
	
For each $r\leq s$, note that $P_{r}(X)\subseteq P_{s}(X)$. Then the canonical inclusion map $P_{r}(X) \xhookrightarrow{i_{sr}} P_{s}(X)$ induces a $*$-homomorphism from $C^*(P_{r}(X))$ to $C^*(P_{s}(X))$. Furthermore, for $d\leq r \leq s$, the inclusion map $i_{sd}$ and the composition $i_{sr}\circ i_{rd}$ induce the same homomorphism from $K_*(C^*(P_{d}(X)))$ to $K_*(C^*(P_{s}(X)))$.

\noindent \textbf{The coarse Baum--Connes conjecture.} For any discrete metric space $X$ with bounded geometry, the coarse assembly map
 $$\mu: \lim_{d\to \infty}K_*(P_d(X)) \to  \lim\limits_{d\to \infty}K_*(C^*(P_d(X))) \cong K_*(C^*(X))$$
 is an isomorphism.

Now, we shall recall the definition of localization algebras \cite{Yu-97} and the relation between its $K$-theory and the $K$-homology groups.

\begin{definition}
Let $Z$ be a proper metric space.
\begin{enumerate}[(1)]
 \item The algebraic localization algebra, denoted by $\mathbb{C}_L[Z]$, is defined to be the $*$-algebra of all bounded and uniformly norm-continuous functions $g: [1,\infty)\rightarrow \mathbb{C}[Z]$ such that ${\rm Prop}_{Z}(g(t))\rightarrow 0 \text{ as } t\rightarrow \infty$.
 \item The localization algebra $C^*_L(Z)$ is the norm closure of $\mathbb{C}_L[Z]$ under the norm $$\|f\|=\sup\limits_{t\in[0,\infty)}\|f(t)\|.$$
\end{enumerate}
\end{definition}

Naturally, the evaluation-at-one map
$$e:C_{L}^*(Z) \to C^*(Z)$$
defined by
$$e(g)=g(1)$$
for $g\in C_{L}^*(Z)$ is a $*$-homomorphism.

Next, we recall some basic properties of localization algebras. A Borel map $f:Z_1\rightarrow Z_2$ is said to be Lipschitz, if there exists a positive constant $C$ such that $d(f(x),f(y))\leq Cd(x,y)$ for all $x,y\in Z_1$. Following the construction in \cite{Yu-97}, a Lipschitz map $f:Z_1\rightarrow Z_2$ induces a $*$-homomorphism $\mbox{Ad}(V_f):C_{L}^*(Z_1) \rightarrow C_{L}^*(Z_2)$ as follows.

Let $\{\epsilon_n\}_{n\in\mathbb{N}}$ be a sequence of positive numbers with $\lim\limits_{n\rightarrow \infty}\epsilon_n=0$. For each $k$, there exists an isometry $V_k:H_{Z_1} \rightarrow H_{Z_2}$ such that $\mbox{Supp}(V_k)\subset \{(x,y)\in Z_1 \times Z_2: d(y,f(x))\leq \epsilon_k\}$.

Define a family of isometries
$$(V_f(t))_{t\in[0,\infty)}:H_{Z_1} \oplus H_{Z_1} \rightarrow H_{Z_2}\oplus H_{Z_2}$$
by
$$V_f(t)=R(t-k+1)(V_k\oplus V_{k+1})R^*(t-k+1),$$
for $t\in [k-1,k)$, where $$R(t)=\left(\begin{array}{cc}
cos(\pi t/2) & sin(\pi t/2)\\
-sin(\pi t/2) & cos(\pi t/2)
\end{array}\right).$$
Then $V_f(t)$ induces a homomorphism on unitizations $$\mbox{Ad}(V_f(t)):C_L^*(Z_1)^+\rightarrow C_L^*(Z_2)^+\otimes M_2(\mathbb{C})$$
by $$\mbox{Ad}(V_f(t))(u(t)+cI)=V_f(t)(u(t)\oplus 0)V_f^*(t)+cI$$
for all $u(t)\in C_L^*(Z_1^+)$ and $c\in \mathbb{C}$.

\begin{definition}
	Let $Z_1$ and $Z_2$ be two proper metric spaces and $f,g$ two Lipschitz maps from $Z_1$ to $Z_2$.  A continuous homotopy $F(t,x):[0,1]\times Z_1 \rightarrow Z_2$ between $f$ and $g$ is said to be strongly Lipschitz if
	\begin{itemize}
		\item[(1)] $F(t,x)$ is coarse map from $Z_1$ to $Z_2$ for each $t$;
		\item[(2)] there exists a positive constant $C$ such that $d(F(t,x),F(t,y))\leq Cd(x,y)$ for all $x,y\in Z_1$ and $t\in [0,1]$;
	    \item[(3)] for any $\epsilon >0$, there exists $\delta >0$ such that $d(F(t_1,x),F(t_2,x))<\epsilon$ for all $x\in Z_1$ and $|t
		_1-t_2|<\delta$;
		\item[(4)] $F(0,x)=f(x) \text{ and } F(1,x)=g(x)$ for all $x\in Z_1$.
	\end{itemize}
\end{definition}

\begin{definition}A metric space $Z_1$ is said to be strongly Lipschitz homotopy equivalent to $Z_2$, if there exist two Lipschitz maps $f:Z_1\rightarrow Z_2$ and $g:Z_2\rightarrow Z_1$ such that $f\circ g$ and $g\circ f$ are strongly Lipschitz homotopy equivalent to the identity maps $id_{Z_1}$ and $id_{Z_2}$, respectively.
\end{definition}

The $K$-theory of localization algebras is invariant under strongly Lipschitz homotopy equivalence (see \cite{Yu-97}). The following Mayer--Vietoris sequence was introduced by the third author, and more details can be found in \cite{Yu-97}.

\begin{proposition}
	Let $Z$ be a simplical complex endowed with the spherical metric, and $Z_1, Z_2 \subset Z$ its simplical subcomplexes endowed with the subspace metric. Then we have the following six-term exact sequence:
	$$\xymatrix{
	K_0(C_{L}^*(Z_1\cap Z_2)) \ar[r] & K_0(C_{L}^*(Z_1))\oplus K_0(C_{L}^*(Z_2)) \ar[r] & K_0(C_{L}^*(Z_1\cup Z_2)) \ar[d]\\
	K_1(C_{L}^*(Z_1\cup Z_2)) \ar[u] & K_1(C_{L}^*(X_1))\oplus K_1(C_{L}^*(Z_2))
\ar[l] & K_1(C_{L}^*(Z_1\cap Z_2)) \ar[l]
}$$
\end{proposition}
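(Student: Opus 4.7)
The plan is to apply a standard Mayer--Vietoris strategy in $C^*$-algebra $K$-theory to decompose $C^*_L(X_1 \cup X_2)$ according to the cover $\{X_1, X_2\}$. I will construct closed two-sided ideals $I_1, I_2 \subseteq C^*_L(X_1 \cup X_2)$ such that the natural inclusion $C^*_L(X_i) \hookrightarrow I_i$ (induced by embedding an ample $X_i$-module into the ambient ample module) is a $K$-theory isomorphism, $I_1 + I_2 = C^*_L(X_1 \cup X_2)$, and the analogue $I_1 \cap I_2 = I(X_1 \cap X_2)$ is $K$-theoretically $C^*_L(X_1 \cap X_2)$. The six-term sequence then arises by applying the standard $K$-theory six-term exact sequence to the short exact sequence
$$0 \longrightarrow I_1 \cap I_2 \xrightarrow{\,f \mapsto (f,f)\,} I_1 \oplus I_2 \xrightarrow{\,(f,g) \mapsto f - g\,} I_1 + I_2 \longrightarrow 0.$$

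For each $i$ I would define
$$I_i := \bigl\{\, f \in C^*_L(X_1 \cup X_2) : \forall\, \varepsilon > 0,\ \exists\, T > 0 \text{ such that } \mathrm{Supp}(f(t)) \subseteq N_\varepsilon(X_i) \times N_\varepsilon(X_i) \text{ for all } t \geq T \,\bigr\},$$
where $N_\varepsilon(X_i)$ denotes the open $\varepsilon$-neighborhood of $X_i$ in $X$. These are visibly closed $*$-subalgebras, and a direct check using the decay of propagation confirms ideal-ness. The crucial step is to show that any $f \in \mathbb{C}_L[X_1 \cup X_2]$ splits as $f_1 + f_2$ with $f_i \in I_i$. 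Since $\mathrm{propagation}(f(t)) \to 0$ as $t \to \infty$, one can choose $\varepsilon(t) \to 0$ decaying more slowly than the propagation, select a Lipschitz partition of unity $\{\varphi_1^{(t)}, \varphi_2^{(t)}\}$ subordinate to $\{N_{\varepsilon(t)}(X_1), N_{\varepsilon(t)}(X_2)\}$, and set $f_i(t) := \sqrt{\varphi_i^{(t)}}\, f(t)\, \sqrt{\varphi_i^{(t)}}$. Passing to norm closures then yields $I_1 + I_2 = C^*_L(X_1 \cup X_2)$, while the identity $I_1 \cap I_2 = I(X_1 \cap X_2)$ follows from direct manipulation of the support conditions.

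Next, for each $Y \in \{X_1, X_2, X_1 \cap X_2\}$, I would verify that the inclusion $C^*_L(Y) \hookrightarrow I(Y)$ induces an isomorphism on $K$-theory. Because $Y$ is a simplicial subcomplex of a simplicial complex equipped with the spherical metric, the $\varepsilon$-neighborhood $N_\varepsilon(Y)$ admits a strong Lipschitz deformation retraction onto $Y$ (built simplex-by-simplex from the linear structure of $X$). Combining this with the strong Lipschitz homotopy invariance of $K_*(C^*_L(\cdot))$ established in \cite{Yu-97} and the functorial construction of $\mathrm{Ad}(V_f)$ on localization algebras recalled in the excerpt produces the $K$-theoretic inverse. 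Combined with the decomposition above, the short exact sequence then delivers the required Mayer--Vietoris six-term sequence.

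The main obstacle is the decomposition step, which requires simultaneously controlling $t$-continuity of $f$, the norm of the commutator errors introduced by compression, and the decaying propagation condition inherent to $C^*_L$. The key estimate is that if the Lipschitz constants of $\varphi_i^{(t)}$ are chosen of order $\mathrm{propagation}(f(t))^{-1/2}$ or smaller, then $\bigl\|\bigl[\sqrt{\varphi_i^{(t)}},\, f(t)\bigr]\bigr\| \to 0$ as $t \to \infty$, so the compressions $f_i$ are still uniformly norm-continuous in $t$, still have propagation tending to zero, and lie in $I_i$. Once this decomposition is in place, the remainder of the proof is a routine invocation of the $K$-theory six-term exact sequence applied to the short exact sequence above.
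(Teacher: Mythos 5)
The paper does not prove this proposition itself; it cites Yu's localization-algebra paper \cite{Yu-97} and states the result. Your strategy is essentially the one used there: introduce, for a subcomplex $Y\subseteq X_1\cup X_2$, the ideal $C^*_L(Y;X_1\cup X_2)$ of functions whose supports are eventually confined to arbitrarily small neighborhoods of $Y$, show that these ideals realize the pushout/pullback decomposition for the cover $\{X_1,X_2\}$, identify their $K$-theory with that of $C^*_L(Y)$ via strong Lipschitz homotopy invariance, and apply the six-term sequence to $0\to I_1\cap I_2\to I_1\oplus I_2\to I_1+I_2\to 0$. So you have found the same proof.

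Two technical points are worth sharpening. First, your $I_i$ as written --- the set of $f$ such that for all $\varepsilon>0$ the supports $\mathrm{Supp}(f(t))$ lie in $N_\varepsilon(X_i)^2$ for $t$ large --- is not obviously norm-closed, since a norm-small perturbation of $f(t)$ can have support far from $X_i$; the standard fix is to define $I_i$ as the norm closure of the set of $f\in\mathbb{C}_L[X_1\cup X_2]$ whose supports are eventually confined to shrinking neighborhoods of $X_i$, and closure must be built into the definition for the ideal-theoretic short exact sequence to apply. Second, the symmetric compression $f_i=\sqrt{\varphi_i^{(t)}}\,f(t)\,\sqrt{\varphi_i^{(t)}}$ does not satisfy $f_1+f_2=f$; the commutator defect you identify does vanish entrywise as $t\to\infty$ but not in the $\sup_t$-norm, so this does not immediately exhibit $f\in I_1+I_2$. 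The cleaner route is the one-sided splitting $f=\varphi_1^{(t)}f(t)+\varphi_2^{(t)}f(t)$: each summand has propagation no larger than $f(t)$, is supported near $X_i\times X_i$ once the propagation is small compared to $\varepsilon(t)$, and remains uniformly norm-continuous in $t$ provided $\varphi_i^{(t)}$ is chosen to vary continuously. Finally, the identity $I_1\cap I_2=I(X_1\cap X_2)$ really does require the simplicial structure (in a general metric space, being near both $X_1$ and $X_2$ need not force proximity to $X_1\cap X_2$); you note it follows from ``direct manipulation'' but this is where the piecewise-spherical geometry of the complex enters, as in \cite{Yu-97}.
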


We then recall a local index map from the $K$-homology group $K_*(Z)$ to the $K$-theory group $K_*(C_L^*(Z))$. For every positive integer $n$, let $\{U_{n,i}\}_i$ be a locally finite open cover for $Z$ with $\mbox{diameter}(U_{n,i})\leq \frac{1}{n}$ for all $i$. Let $\{\phi_{n,i}\}_i$ be the partition of unity subordinate to the open cover $\{U_{n,i}\}_i$. For any $[H_Z, F] \in K_0(Z)$, we define a family of operators $(F(t))_{t\in [0,\infty)}$ by
$$F(t)=\sum_i\left((n-t)\phi_{n,i}^{\frac{1}{2}}F\phi_{n,i}^{\frac{1}{2}}+(t-n+1)\phi_{n+1,i}^{\frac{1}{2}}F\phi_{n+1,i}^{\frac{1}{2}})\right),$$
for all $t\in[n-1,n]$, where the infinite sum converges under the strong operator topology. Since $\mbox{diameter}(U_{n,i}) \rightarrow 0$ as $n\rightarrow \infty$, we have that propagation$(F(t))\rightarrow 0$ as $t\rightarrow 0$. Moreover, we have that the path $(F(t))_{t \in [0,\infty)}$ is a multiplier of $C_L^*(Z)$ and it is a unitary modulo $C_L^*(Z)$. Hence we can define a local index map
$${\rm Ind}_L:K_0(Z)\rightarrow K_0(C_L^*(Z))$$
by
$${\rm Ind}_L([H_Z,F])=\partial([F(t)]),$$
where $\partial:K_1(M(C_{L}^*(Z))/C_L^*(Z))\rightarrow K_0(C_{L}^*(Z))$ is the boundary map in $K$-theory, and $M(C_{L}^*(Z))$ is the multiplier algebra of $C_{L}^*(Z)$.
Similarly we can define the local index map
$${\rm Ind}_L: K_1(Z)\rightarrow K_1(C_{L}^*(Z)).$$
The following result establishes the relation between the $K$-homology groups and the $K$-theory of localization algebras.
\begin{theorem}{\cite{Yu-97}}
For any finite-dimensional simplical complex $Z$ endowed with the spherical metric, the local index map
$${\rm Ind}_L:K_*(Z)\rightarrow K_*(C_{L}^*(Z))$$ is an isomorphism.
\end{theorem}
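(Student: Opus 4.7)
The plan is to prove the theorem by induction on the dimension of the simplicial complex $X$, using the Mayer--Vietoris sequences on both sides together with strongly Lipschitz homotopy invariance. First I would verify two structural properties of the local index map $\mathrm{ind}_L$. The construction is functorial: a Lipschitz map $f\colon X\to Y$ induces a commutative square relating $\mathrm{ind}_L$ for $X$ and $Y$, because the isometries $V_f(t)$ used to define the induced homomorphism $\mathrm{Ad}(V_f)\colon C_L^*(X)\to C_L^*(Y)$ can be chosen compatibly with the partitions of unity used to form $F(t)$. Combined with the strongly Lipschitz homotopy invariance of $K_*(C_L^*(X))$ recalled in the excerpt, this shows $\mathrm{ind}_L$ is invariant under strongly Lipschitz homotopy equivalence. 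Likewise, a straightforward check shows that $\mathrm{ind}_L$ is natural with respect to the Mayer--Vietoris decomposition $X=X_1\cup X_2$ of a simplicial subcomplex pair, so it yields a morphism from the classical Mayer--Vietoris sequence in $K$-homology to the Mayer--Vietoris sequence of localization algebras displayed in the excerpt.

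Next I would handle the base case. When $X$ is a single point, $C_L^*(X)$ is (up to isomorphism) the algebra of uniformly norm-continuous bounded paths in $\mathcal{K}(H_0)$, which is homotopy equivalent to $\mathcal{K}(H_0)$ via evaluation at zero; this gives $K_*(C_L^*(\mathrm{pt}))\cong K_*(\mathcal{K})\cong K_*(\mathbb{C})$, matching $K_*(\mathrm{pt})$, and one verifies by inspection that $\mathrm{ind}_L$ realizes this isomorphism. Since every closed simplex $\Delta^n$ is strongly Lipschitz homotopy equivalent to a point (via the straight-line contraction, which is Lipschitz in the spherical metric), the theorem holds on any single simplex by the homotopy invariance established in the previous step; and by the same argument it holds on any finite disjoint union of contractible pieces.

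For the inductive step, suppose $X$ has dimension $n$ and the theorem has been proved for all complexes of dimension less than $n$, and (by a secondary induction) for all subcomplexes of $X$ with fewer top-dimensional simplices. Write $X=X_1\cup X_2$ where $X_1$ is a small open thickening of one top-dimensional simplex $\sigma$ and $X_2$ is the union of the remaining top-dimensional simplices together with a neighborhood of $\partial\sigma$. Then $X_1$ is contractible (strongly Lipschitz homotopy equivalent to a point), $X_1\cap X_2$ deformation retracts onto $\partial\sigma$ (a sphere, of dimension $n-1$), and $X_2$ has one fewer top-dimensional simplex than $X$. The inductive hypothesis supplies isomorphisms for $X_1$, $X_2$, and $X_1\cap X_2$; comparing the $K$-homology Mayer--Vietoris sequence with the localization Mayer--Vietoris sequence of the excerpt via the natural transformation $\mathrm{ind}_L$ and invoking the Five Lemma then yields the isomorphism for $X$.

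The main obstacle I anticipate is the careful verification that $\mathrm{ind}_L$ is genuinely a natural transformation between Mayer--Vietoris sequences, including commutativity with the connecting maps. The connecting maps on both sides are defined via boundary constructions in $K$-theory applied to cut-off versions of representative operators, and matching these up requires choosing the locally finite open covers $\{U_{n,i}\}$ and partitions of unity $\{\phi_{n,i}\}$ used to build $F(t)$ so that they restrict appropriately to $X_1$, $X_2$, and $X_1\cap X_2$. Once this bookkeeping is carried out, the proof reduces to the Five Lemma, and the induction terminates because $X$ is finite-dimensional and has finitely many simplices in each dimension.
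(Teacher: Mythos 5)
Your strategy is essentially Yu's original argument from \cite{Yu-97} (which this paper cites without reproducing): establish strongly Lipschitz homotopy invariance and naturality of $\mathrm{ind}_L$, verify the base case of a point, and run a double induction via Mayer--Vietoris and the Five Lemma. One technical slip worth fixing: the Mayer--Vietoris sequence recalled in the paper applies to \emph{simplicial subcomplexes} $X_1, X_2$ with the subspace metric, which are automatically closed, so an ``open thickening'' of a top-dimensional simplex is not an admissible choice. The decomposition should instead take $X_1 = \sigma$ (a single closed $n$-simplex) and $X_2$ the union of the remaining simplices together with the full $(n-1)$-skeleton; then $X_1 \cup X_2 = X$, $X_1 \cap X_2 = \partial\sigma$ is a genuine simplicial subcomplex of dimension $n-1$, $X_1$ is strongly Lipschitz contractible, and $X_2$ has one fewer $n$-simplex, so both inductive hypotheses apply exactly as you intend. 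With that replacement the argument goes through as written.
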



If $X$ is a discrete metric space with bounded geometry, we have the following commutative diagram:
\begin{equation*}
\xymatrix{
	\ \ &\lim\limits_{d\to \infty}K_*(C_{L}^*(P_d(X))) \ar[d]^{e_*}\\	
	\lim\limits_{d \to \infty}K_*(P_d(X)) \ar[ur]^{{\rm Ind}_L}_{\cong} \ar[r]^{\mu} & \lim\limits_{d \to \infty}K_*(C^*(P_d(X))).
}
\end{equation*}
Therefore, the coarse Baum--Connes conjecture is a consequence of the result that the map
$$e_*: \lim\limits_{d\rightarrow \infty}K_*(C_{L}^*(P_d(X))) \rightarrow \lim\limits_{d\rightarrow \infty}K_*(C_{}^*(P_d(X))) $$
induced by the evaluation-at-one map on $K$-theory is an isomorphism.

\subsection{The coarse Baum--Connes conjecture for a sequence of metric spaces}
In this subsection, we shall first formulate the coarse Baum--Connes conjecture for a sequence of discrete metric spaces with bounded geometry, and then discuss the connection between the coarse Baum-Connes conjecture of a sequence of finite metric spaces and that of their coarse disjoint union.

Let $\{(X_m, d_m)\}_{m \in \mathbb{N}}$ be a sequence of discrete metric spaces with uniform bounded geometry in the sense that for each $R>0$, the number of the elements in the set $B_{X_m}(x,R)=\left\{y \in X_m: d(x,y)\leq R\right\}$ is at most $M_R$ for some $M_R>0$ which does not depend on $m$. Note that a sequence of finitely generated groups $\left(G_m\right)_m$ has uniform bounded geometry if the numbers of generating subsets for all groups are uniformly bounded.

For each $d>0$, and each $m \in \mathbb{N}$, we choose a countable dense subset $X^m_d \subseteq P_d(X_m)$ such that $X^m_d \subseteq X^{m}_{d'}$ for each $m$ if $d<d'$. Fix an infinite-dimensional separable Hilbert space $H$, and denote by $K$ the algebra of compact operators on $H$.
\begin{definition}\label{def-alg-uniform-Roe}
For each $d>0$, the algebraic Roe algebra $\mathbb{C}_u[(P_d(X_m))_{m \in \mathbb{N}}]$ is the collection of tuples $T=\left(T^{m}\right)_{m\in \mathbb{N}}$
satisfying
\begin{enumerate}[(1)]
    \item each $T^{m}: X^m_d \times X^m_d \to K$ by $(x,y)\mapsto T^m_{x,y}$ is a bounded function;
    \item there exists $r>0$ such that for each $m$, $T^{m}_{x,y}=0$ for all $x, y \in X^m_d$ with $d(x,y)\geq r$;
    \item for each $m$ and each bounded subset $B\subseteq P_d(G_m)$, the set
    $$\left\{(x, y) \in (B \times B) \cap (X^m_d\times X^m_d):  T^{m}_{x,y}\neq 0\right\}$$
    is finite.
\end{enumerate}
\end{definition}

We  view $\mathbb{C}_u[(P_d(X_m))_{m \in \mathbb{N}}]$ as a $*$-subalgebra of $\prod_m C_u^*(P_d(X_m))$. Let
$$E=\bigoplus_m \ell^2(X^m_d) \otimes H.$$
The algebraic Roe algebra $\mathbb{C}_u[(P_d(X_m))_{m \in \mathbb{N}}]$ admits a $*$-representation on $E$ by the restriction of the $*$-representation of $\prod_m C^*(P_d(X_m))$.

The Roe algebra for the sequence $\left(P_d(G_m)\right)_{m \in \mathbb{N}}$, denoted by $C_u^*((P_d(X_m))_{m \in \mathbb{N}})$, is the completion of $\mathbb{C}_u[(P_d(X_m))_{m \in \mathbb{N}}]$ under the operator norm on $E$.

We can also define the localization algebra for a sequence of metric spaces.
\begin{definition}
The algebraic localization algebra, $\mathbb{C}_{u,L}[(P_d(X_m))_{m \in \mathbb{N}}]$, is the $*$-algebra of all bounded and uniformly continuous functions
$$f: [1,\infty) \to \mathbb{C}_u[(P_d(X_m))_{m \in \mathbb{N}}]$$
such that $f(t)$ is of the form $f(t)=(f^{m}(t))_{m \in \mathbb{N}}$ for all $t \in [1,\infty)$, where the path of the tuples $(f^{m}(t))_{m \in \mathbb{N}}$ satisfies that
$$
\sup_{m \in \mathbb{N}} {\rm Prop}_{P_d(G_m)}(f^m(t))\to 0, \mbox{~~as~~} \to \infty.
$$

The localization algebra
$C_{u,L}^*((P_d(X_m))_{m \in \mathbb{N}})$ is defined to be the completion of $\mathbb{C}_{u,L}[(P_d(X_m))_{m \in \mathbb{N}}]$
under the norm
$$\|f\|=\sup_{t\in [1, \infty)}\|f(t)\|_{C_u^*((P_d(X_m))_{m \in \mathbb{N}})}$$
for all $f \in \mathbb{C}_{u,L}[(P_d(X_m))_{m \in \mathbb{N}}]$.
\end{definition}
Naturally, we have a $*$-homomorphism
$$e:C_{u,L}^*((P_d(X_m))_{m \in \mathbb{N}})\to C_u^*((P_d(X_m))_{m \in \mathbb{N}})$$
defined by $e((f_m(t))_{m \in \mathbb{N}})=(f_m(1))_{m \in \mathbb{N}}$ for all $(f_m(t))_{m \in \mathbb{N}} \in C_{u,L}^*((P_d(X_m))_{m \in \mathbb{N}})$. Moreover, this homomorphism induces a map
$$e_*: K_*(C^*_{u,L}((P_d(X_m))_{m \in \mathbb{N}})) \to K_*(C_u^*((P_d(X_m))_{m \in \mathbb{N}}))$$
at the $K$-theory level.

\noindent \textbf{The coarse Baum--Connes conjecture for a sequence of metric spaces.} For any  sequence of discrete metric spaces $\left(X_m,d_m\right)_{m \in \mathbb{N}}$ with uniform bounded geometry, the map
 $$e_*: \lim\limits_{d \to \infty} K_*(C^*_{u,L}((P_d(X_m))_{m \in \mathbb{N}})) \to \lim\limits_{d \to \infty} K_*(C_u^*((P_d(X_m))_{m \in \mathbb{N}}))$$
induced by the evaluation-at-one map on $K$-theory is an isomorphism.

Let $\left(X_m,d_m\right)_{m \in \mathbb{N}}$ be a sequence of metric spaces with uniform bounded geometry. The sequence $(X_m,d_m)_{m \in \mathbb{N}}$ is said to be coarsely embeddable into Hilbert space if there exist two non-decreasing functions $\rho_-,\rho_+:[0,\infty) \to [0,\infty)$, and for each $m \in \mathbb{N}$ there exists a map $f_m$ from $X_m$ to a Hilbert space $H_m$ such that
\begin{itemize}
    \item $\lim\limits_{t\to \infty} \rho_{\pm}(t)=\infty$
    \item $\rho_-(d(x, y)) \leq \|f_m(x)-f_m(y)\|_{H_m} \leq \rho_+(d(x,y))$
for all $x, y \in X_m$.
\end{itemize}
Note that a sequence $\left( X_m\right)_{m \in \mathbb{N}}$ is coarsely embeddable into Hilbert space if and only if the metric space of separated disjoint union of the sequence $\left(X_m\right)_{m \in \mathbb{N}}$ is coarsely embeddable into Hilbert space. By the third author's result in \cite{Yu2000}, the coarse Baum--Connes conjecture for the sequence $\left(X_m\right)_{m \in \mathbb{N}}$ is true when the sequence $\left(X_m\right)_{m \in \mathbb{N}}$ can be coarsely embedded into Hilbert space.

Let $(1\to N_m\to G_m\to Q_m \to 1)_{m \in \mathbb{N}}$ be a sequence of short exact sequence of finite groups.
For each $m$, let $S_m$, $S'_m$ and $S''_m$ be the finite symmetric generating subsets of $N_m$, $G_m$ and $Q_m$, respectively, such that
\begin{enumerate}
    \item[(1)] $S_m\subseteq S'_m$ and $\pi_m(S'_m)=S''_m$, where $\pi_m:G_m \to Q_m$ is the quotient map.
    \item[(2)] $\sup_m|S_m|<\infty$, and $\sup_m|S'_m|<\infty.$
\end{enumerate}
The coarse disjoint union $(X, d_X)$ of $(G_m)_{m \in \mathbb{N}}$ is the disjoint union $\bigsqcup_m G_m$ as a set endowed with  a metric $d_X$ satisfying the following conditions:
\begin{enumerate}[(1)]
    \item for each $m$, $d_X(x, y)=d_{m}(x,y)$ for all $x, y\in G_m$;
    \item $\mbox{dist}(G_m,G_{m'})\to \infty$, as $m+m'\to \infty$ for all $m\neq m'$.
\end{enumerate}
It is easy to verify that any two coarse disjoint unions are coarsely equivalent.

The separated disjoint union $(Y, d_Y)$ is the disjoint union equipped with the metric $d_Y$ satisfying the following conditions:
\begin{enumerate}[(1)]
    \item for each $m$, $d_Y(x, y)=d_m(x,y)$ for all $x, y \in G_m$;
    \item $\mbox{dist}(G_m, G_{m'})=\infty$ for all $m \neq m'$.
\end{enumerate}
Note that the coarse disjoint union of a sequence $(G_m)_{m \in \mathbb{N}}$ is coarsely embeddable into Hilbert space if and only the separated union of $(G_m)_{m \in \mathbb{N}}$ is coarsely embeddable.

Now, we discuss the connection between the coarse Baum--Connes conjecture for the coarse disjoint union of a sequence of finite metric spaces and that of their separated disjoint union. The proof of the following result can be found in \cite{Willett-Yu-Book}.
\begin{theorem}\label{theorem-CBC-for-co-disjoint-union-and-s-union}
Let $\left( G_m\right)_{m \in \mathbb{N}}$ be a sequence of finite groups endowed with word length metrics such that the metric spaces $X$ and $Y$ defined above have bounded geometry.
If the coarse Baum--Connes conjecture holds for the separated disjoint union $(Y, d_Y)$, then the coarse Baum--Connes conjecture holds for the coarse disjoint union $(X, d_X)$.
\end{theorem}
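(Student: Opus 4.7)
The plan is to identify the coarse Baum--Connes assembly maps for $X$ and $Y$ after cutting each space into a finite ``head'' and an ``eventually separated tail'', and to argue that at every coarse scale the two tails are identical while the heads contribute nothing in the direct limit over $d$. The underlying geometric observation is that $X$ and $Y$ share the same underlying set and agree on each component $G_m$; they differ only in the between-component distances, which are finite but tend to infinity in $X$, while being infinite in $Y$.

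Fix $d > 0$. By the defining property $\mathrm{dist}(G_m, G_{m'}) \to \infty$ as $m + m' \to \infty$ with $m \neq m'$, there exists $N = N(d)$ such that $d(G_m, G_{m'}) > d$ for all distinct $m, m' > N$. Put $X_{>N} = \bigsqcup_{m > N} G_m$ with the metric restricted from $X$ and define $Y_{>N}$ analogously; the complement $X_{\leq N} = \bigsqcup_{m \leq N} G_m$ is a finite set. By the choice of $N$, no $d$-Rips simplex in $X_{>N}$ can bridge distinct components, so
\[
P_d(X_{>N}) \;=\; \bigsqcup_{m > N} P_d(G_m) \;=\; P_d(Y_{>N})
\]
as simplicial complexes, and every operator of propagation at most $d$ in the Roe algebra of $X_{>N}$ is automatically block-diagonal with respect to the $G_m$'s, matching an element of the Roe algebra of $Y_{>N}$.

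A standard cutting-and-pasting argument, using the Mayer--Vietoris sequence for Roe algebras and the fact that removing a finite subspace does not alter the $K$-theory in the relevant colimit, shows that the inclusions $X_{>N} \hookrightarrow X$ and $Y_{>N} \hookrightarrow Y$ induce isomorphisms on both $K$-homology of Rips complexes and $K$-theory of Roe algebras, after passing to $\lim_{d \to \infty}$ with $N = N(d)$. Naturality of the assembly map then yields the commutative square
\[
\xymatrix{
\lim\limits_{d \to \infty} K_*(P_d(X)) \ar[r]^{\mu_X} \ar[d]_{\cong} & \lim\limits_{d \to \infty} K_*(C^*(P_d(X))) \ar[d]^{\cong} \\
\lim\limits_{d \to \infty} K_*(P_d(Y)) \ar[r]^{\mu_Y} & \lim\limits_{d \to \infty} K_*(C^*(P_d(Y)))
}
\]
and the assumption that $\mu_Y$ is an isomorphism then forces $\mu_X$ to be one as well.

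The main technical obstacle lies in making the Roe algebra identification $K_*(C^*(P_d(X_{>N}))) \cong K_*(C^*(P_d(Y_{>N})))$ rigorous: although the algebraic Roe subalgebras coincide at propagation at most $d$, their $C^*$-completions use different metric controls (finite versus infinite between-component distances), so one must coordinate carefully how $d$ and $N = N(d)$ tend to infinity. A filtration by propagation scale, together with the uniform bounded geometry assumption used to control column sums uniformly, should suffice to match the completions at the level of $K$-theory without circularity.
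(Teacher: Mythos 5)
Your approach contains a genuine gap at its central step, and the gap cannot be repaired by the fix you sketch at the end.

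The right-hand vertical arrow in your square — the claimed isomorphism $\lim_d K_*(C^*(P_d(X))) \cong \lim_d K_*(C^*(P_d(Y)))$ — is unjustified, and is in fact false in general. After removing the finite head, the Rips complexes $P_d(X_{>N(d)})$ and $P_d(Y_{>N(d)})$ do agree as simplicial complexes, but the Roe algebras built on them are different $C^*$-algebras. The Roe algebra $C^*(P_d(X_{>N}))$ is the closure of \emph{all} locally compact finite-propagation operators on $P_d(X_{>N})$; since the components $G_m$, $G_{m'}$ sit at \emph{finite} distance in the coarse disjoint union $X$, there are legitimate finite-propagation operators (of propagation much larger than $d$) that connect distinct blocks, and after completion this produces operators (e.g.\ ghost projections) that have no counterpart in $C^*(P_d(Y_{>N}))$, where every finite-propagation operator is automatically block-diagonal. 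Your observation that operators of propagation $\leq d$ agree in the two algebras is correct but irrelevant: the Roe algebra is not bounded by the Rips parameter, and there is no way to ``coordinate $d$ and $N(d)$'' to make the completions coincide. Indeed, if $(G_m)$ is an expander, the obstruction to coarse embeddability lives precisely in the difference between $C^*(X)$ and $C^*(Y)$, so collapsing this distinction would make the theorem vacuous.

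What actually works, and what the paper does, is to exploit the exact algebraic relations
\[
C^*(P_d(X)) = K(H_d) + C^*(P_d(Y)), \qquad C^*(P_d(Y)) \cap K(H_d) = \bigoplus_m C^*(P_d(G_m)),
\]
which give an isomorphism \emph{of quotients}
\[
C^*(P_d(X)) / K(H_d) \;\cong\; C^*(P_d(Y)) / \bigoplus_m C^*(P_d(G_m)),
\]
not of the algebras themselves. The paper then runs two diagram chases: one using finiteness of each $G_m$ (so $P_d(G_m)$ is strongly Lipschitz homotopy equivalent to a point for large $d$, making the assembly map an isomorphism on $\bigoplus_m K_*(C_L^*(P_d(G_m))) \to \bigoplus_m K_*(C^*(P_d(G_m)))$) to transfer the hypothesis on $Y$ to the quotient; and a second, using triviality of assembly on the finite subcomplex $\Delta_d$ and on the compacts $K(H_d)$, plus the quotient isomorphism above, to conclude via the Five Lemma that the assembly map for $X$ is an isomorphism. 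Your head/tail decomposition is a reasonable first instinct, but it attempts to compare the two Roe algebras directly rather than comparing them modulo their respective ideals, and that direct comparison is precisely what fails.
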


To prove the coarse Baum--Connes conjecture for a coarse disjoint union, it suffices to prove the coarse Baum--Connes conjecture for the family of metric spaces.
Theorem \ref{thm:main result} follows from the following result.
\begin{theorem}\label{theorem-CBC-for-sequence}
Let $(1 \to N_m \to G_m \to Q_m \to 1)_{m \in \mathbb{N}}$ be a sequence of exact sequences of finite groups such that the sequences $(N_m)_m$, $(G_m)_m$ and $(Q_m)_m$ have uniform bounded geometry. If $(N_m )_m$ and $(Q_m )_m$ are coarsely embeddable into Hilbert space, then the coarse Baum--Connes conjecture holds for the sequence $(G_m)_{m \in \mathbb{N}}$, that is, the map
 $$e_*: \lim\limits_{d \to \infty} K_*(C^*_{u,L}((P_d(G_m))_{m \in \mathbb{N}})) \to \lim\limits_{d \to \infty} K_*(C_u^*((P_d(G_m))_{m \in \mathbb{N}}))$$
 induced by the evaluation-at-one map on $K$-theory is an isomorphism.
\end{theorem}
In the rest of this paper, we will focus on proving the above result.

\section{Twisted Roe algebras and localization algebras}

In this section, we shall first recall some properties of Bott--Dirac operator on finite-dimensional Euclidean spaces, and then use the coarse embeddability of the the sequence $(Q_m)_{m \in \mathbb{N}}$ to formulate the twisted coarse Baum--Connes conjecture for the sequence $(G_m)_{m \in \mathbb{N}}$.

\subsection{Preliminaries}\label{sect:prelimilary on differential geometry}
Let $V$ be an even-dimensional Euclidean space. Following the constructions of Willett--Yu \cite{Willett-Yu-Book}, we shall recall the concepts of the Hilbert space $\mathcal{L}^2_V$ and some unbounded, essentially self-adjoint operator $F$ on $\mathcal{L}^2_V$.
This operator plays a crucial role in the definition of twisted Roe algebras which encode the geometry of the metric spaces and reduce the coarse Baum--Connes conjecture to the twisted coarse Baum--Connes conjecture. Using the techniques developed in \cite{Willett-Yu-Book}, we can bypass the difficulties caused by the obstruction that the maximal twisted Roe algebras may not be  isomorphic to the reduced twisted Roe algebras \cite{Yu2000}.

Denote by ${\rm Cliff}_{\mathbb{C}}(V)$ the (complexified) Clifford algebra, which is the universal unital complex algebra containing $V$ as a subspace and subject to the multiplicative relations
$$
x\cdot x=\|x\|^2
$$
for all $x \in V$, where $\|x\|$ is the norm of $x$ in the Euclidean space. In this paper, ${\rm Cliff}_{\mathbb{C}}(V)$ is viewed as a graded $C^*$-algebra as well as a graded Hilbert space, denoted by $H_V$, where both gradings are given by the grading on the Clifford algebra.

Let $\mathcal{L}^2_V$ be the Hilbert space of square integrable functions from $V$ to $H_V$ and let $\mathcal{S}_V$ be the subspace of all Schwartz functions from $V$ to $H_V$.

Now, fix an orthonormal basis $\{e_1,e_2,\cdots, e_d\}$ of $E$, and let $x_1,x_2,\cdots, x_d: V \to \mathbb{R}$ be the corresponding coordinates. The Bott operator
$$
C:\mathcal{L}^2_V\to \mathcal{L}^2_V
$$
is defined by
$$
(C\xi)(v)=\sum_{i=1}^d (x_ie_i)\cdot \xi(v)
$$
for all $\xi \in \mathcal{L}^2_V$ and $v=\sum_{i=1}^d x_ie_i \in V$. The Dirac operator
$$
D:\mathcal{L}^2_V\to \mathcal{L}^2_V
$$
is defined by
$$
D\xi(u)=\sum_{i=1}^d \widehat{e}_i \frac{\partial \xi}{\partial x_i}(u)
$$
for all $\xi \in \mathcal{L}^2_V$. It is a well-known fact that the Bott operator and the Dirac operator are unbounded and self-adjoint operators with domain $\mathcal{S}_V$. The Bott--Dirac operator is the unbounded operator
$$B=D+C:\mathcal{L}^2_V \to \mathcal{L}^2_V$$
with domain $\mathcal{S}_V$. Note that $B$ is essentially self-adjoint. For $x \in V$, let $c_x$ be the operator on $\mathcal{L}^2_V$ defined by the Clifford multiplication by the vector $x$. Note that $c_x$ is a bounded, self-adjoint operator on $\mathcal{L}^2_V$. To study the twisted coarse Baum--Connes conjecture, we need the following operator $F$ on $\mathcal{L}^2_V$.

\begin{definition}\label{defn:Operator F}
Let $s \in [1,\infty)$ and $x\in V$.
\begin{enumerate}
\item[(1)] The Bott--Dirac operator associated with $(s, x)$ is the unbounded operator
$$
B_{s,x}=s^{-1}D+C-c_x.
$$
\item[(2)] Define the bounded operator on $\mathcal{L}^2_V$
$$F_{s,x}=B_{s,x}(1+B^2_{s, x})^{-1/2}.$$
\end{enumerate}

\end{definition}

For $R>0$ and $x \in V$, let $\chi_{R, x}$ be the indicator function on the closed $R$-ball of radius $R$ centered at $x$.
Note that the Hilbert space $\mathcal{L}^2_V$ is an ample $V$-module. Each function $f \in C_0(V)$ can be viewed as a function from $V$ to the Clifford algebra ${\rm Cliff}_{\mathbb{C}}(V)$. The $*$-representation of $C_0(V)$ on $\mathcal{L}^2_{V}$ is defined by
$$f(g)(v)=f(v)\cdot g(v)$$
for all $f\in C_0(V)$, $g \in \mathcal{L}^2_{V}$ and $v\in V$. The following properties will be useful later, and the proof can be found in \cite{Willett-Yu-Book}.
\begin{proposition}\label{prop:property of F}
For each $\epsilon>0$, there exists an odd function $\Psi: \mathbb{R}\to [-1,1]$ satisfying $\Psi(t)\to \pm 1$ as $t \to \pm \infty$ with the following properties:
\begin{enumerate}[(1)]
    \item for all $s$ and $x$, $\|F_{s,x}-\Psi(B_{s,x})\|<\epsilon$;
    \item there exits $R_0>0$ such that
          $${\rm Prop}_V(\Psi(B_{s,x}))\leq s^{-1}R_0$$
          for all $s \in [1, \infty)$ and $x \in V$, where the $V$-propagation of $T$ is defined by viewing the Hilbert space $\mathcal{L}^2_V$ as a $V$-module by the standard way;
    \item for all $s \in [1, \infty)$ and $x \in V$, the operator $\Psi(B_{s,x})^2-1$ is compact;
    \item for all $s \in [1, \infty)$ and $x, y \in V$, the operator $\Psi(B_{s, x})-\Psi(B_{s,y})$ is compact;
    \item there exists a constant such that for all $s \in [0, \infty)$ and $x, y \in V$,
          $$
          \|\Psi(B_{s,x})-\Psi(B_{s,y})\|\leq c|x-y|;
          $$
    \item for all $x \in V$, the function
          $$[1, \infty)\to B(\mathcal{L}^2_V), \quad s \mapsto \Psi(B_{s, x})$$
          is strongly-$*$ continuous;
    \item For any $b>0$, the family of functions
    $$
    [1,b]\to B(\mathcal{L}^2_{V}),~~~s\mapsto \Psi(B_{s,x})^2-1
    $$
    is norm equicontinuous in $x$ and $s$;
    \item for any $r>0$, denote by $E_r=\{(x,y)\in V\times V:  |x-y|\leq r\}$. Then for any $b>0$, the family of functions
    $$
    [1,b]\to B(\mathcal{L}^2_{V}), s \mapsto \Psi(B_{s,x})-\Psi(B_{s,y})
    $$
    is norm equicontinuous on $(x,y)\in E_r$ and $s\in [1,b]$;
    \item there exists $R_1>0$ such that for all $R>R_1$, $s\in [1,\infty)$ and $x\in V$, we have that
     $$\|(\Psi(B_{s,x})^2-1)(1-\chi_{x, R})\|<\epsilon;$$
    \item for all $r>0$, there exists $R_1>0$ such that for all $R\geq R_1$ and all $s \in [2d, \infty)$ and $x, y \in V$ with $|x-y|\leq r$, we have that
    $$\|(\Psi(B_{s,x})-\Psi(B_{s,y}))(1-\chi_{x, R})\|<\epsilon.$$
\end{enumerate}
\end{proposition}
The operators $F_{s,x}$ will be used to define the index maps later which help to reduce the Baum--Connes conjecture to a twisted Baum--Connes conjecture.

\subsection{Twisted Roe algebras and twisted localization algebras}

In this subsection, we shall first define the twisted Roe algebra using the geometry of the sequence $(Q_m)_{m\in \mathbb{N}}$. Then we prove the twisted version of coarse Baum--Connes conjecture for the sequence $(G_m)_{m\in \mathbb{N}}$.

Let $(\varphi_m: Q_m \to V_m )_{m\in \mathbb{N}}$ be a sequence of coarse embeddings into Euclidean spaces, where each $V_m$ is an Euclidean space of even dimension $d_m$. We first lift each coarse embedding $\varphi_m: Q_m \to V_m$ to a map on $G_m$ along the quotient map $\pi_m: G_m\to Q_m$, and then extend this map to the Rips complex $P_d(G_m)$, still denoted by $\varphi_m: P_d(G_m)\to V_m$, mapping $\sum_{i}t_ix_i$ to $\sum_{i}t_i\varphi_m(\pi_{m}(x_i))$ for each $d>0$.

Recall that we fixed a countable dense subset $X^m_d \subseteq P_d(G_m)$ for each $m$ and each $d$ in the definition of Roe algebra for the sequence $(G_m)_{m\in \mathbb{N}}$. Fix an infinite-dimensional separable Hilbert space $H$. Consider the tensor product Hilbert space
$$H_{d,V_m}=\ell^2(X^m_d)\otimes H \otimes \mathcal{L}^2_{V_m}.$$
Denote by $K(H_{d,V_m})$ and $B(H_{d,V_m})$ the algebra of compact operators and the algebra of all bounded linear operators, respectively.
Note that $H_{d,V_m}$ is both an ample $P_d(G_m)$-module and an ample $V_m$-module for each $m$. Therefore, for each bounded operator $T$ on the geometric module $H_{d,V_m}$, we can consider the $P_d(G_m)$-propagation and the $V_m$-propagation for $T$.

For every bounded linear operator $T$ on $\mathcal{L}^2_{V_m}$, we can define an operator $T^V$ on the Hilbert space $H_{d,V_m}$ as follows.
\begin{definition}
For each $m\in \mathbb{N}$ and each operator $T\in B(\mathcal{L}^2_{V_m})$, we define a bounded operator $T^{V_m}$ on $H_{d,V_m}$ by
$$
T^{V_m}:\delta_x\otimes \xi\otimes u\mapsto \delta_x\otimes \xi\otimes V_{\varphi_m(x)}TV^*_{\varphi_m(x)}u,
$$
for all $x \in X^m_d$, $\xi\in H$ and $u \in \mathcal{L}^2_{V_m}$.
\end{definition}

For each tuple $(T_m)_{m \in \mathbb{N}}\in \prod_m B(\mathcal{L}^2_{V_m})$, by the definition, we know that the operator $T^{V}$ is a block-diagonal operator and the entries of the operator is given by:
\begin{equation*}
T^{V}_{m,x,y}= \left\{
        \begin{array}{ll}
            {\rm Id}\otimes V_{\varphi_m(x)}T_mV^*_{\varphi_m(x)}, & \quad y=x \\
            0 & \mbox{otherwise}.
        \end{array}
    \right.
\end{equation*}
For each $m \in \mathbb{N}$, let $\chi_{V_m,R}$ be the characteristic function on the closed ball of radius $R$ in $V_m$ centered at the origin. The operators $\chi^V_{V_m,R}$ are used to encode the coarse embeddings $(\varphi_m)_{m \in \mathbb{N}}$ when defining the twisted Roe algebras.

On the ample $P_d(G_m)$-module, we can define the Roe algebra $C^*(P_d(G_m);H_{d,V_m})$ for each $m \in \mathbb{N}$ and $d>0$. Since each $P_d(G_m)$ is a bounded metric space, by the local compactness, each element $T$ in $\mathbb{C}[P_d(G_m);H_{d,V_m}]$ can be approximated by an $X^m_d$$\times$$X^m_d$-matrix $(T_{x,y})_{x,y\in X^m_d}$, where each entry $T_{x,y}$ is a compact operator on $H\otimes \mathcal{L}^2_{V_m}$ and the set $\{(x,y)\vert T_{x,y}\neq 0\}$ is finite.

For each $d>0$, let $\prod_{m} C_b([0,\infty), C^*(P_d(G_m);H_{d,V_m}))$ be the product $C^*$-algebra of all bounded norm-continuous from $[1,\infty)$ to the Roe algebra $C^*(P_d(G_m);H_{d,V_{m}})$ equipped with the supreme norm. Each element of this $C^*$-algebra can be expressed as a tuple $(T^m_{s})_{m \in \mathbb{N}, s \in [1,\infty)}$
where for each $m \in \mathbb{N}$, $s \mapsto T^m_{s}$ is a function from $[1,\infty)$ to $C^*(P_d(G_m);H_{d,V_{m}})$.

Now, let us introduce the twisted Roe algebra following \cite[Section 12.6]{Willett-Yu-Book}.
\begin{definition}\label{Def:TwistedRoeAlgebra}

Let $d>0$. The algebraic twisted Roe algebra $\mathbb{A}((P_d(G_m); V_m)_{m \in \mathbb{N}})$ is defined to be the $*$-subalgebra of the product $\prod_{m} C_b([0,\infty), C^*(P_d(G_m);H_{d,V_m})))$ consisting of elements $(T^m_{s})_{m \in \mathbb{N}, s \in [1,\infty)}$ satisfying the following properties:

\begin{enumerate}[(1)]
    \item $\sup\limits_{m \in\mathbb{N}, s \in [1,\infty)} {\rm Prop}_{P_d(G_m)}(T^m_{s})<\infty$;
    \item $\lim\limits_{s \to \infty}\sup\limits_{m \in \mathbb{N}} {\rm Prop}_{V_m}(T^m_{s})=0$;
    \item $\lim\limits_{R \to \infty}\sup\limits_{m \in \mathbb{N}, s \in[1,\infty)} \|\chi^{V}_{V_m,R} T^m_{s}-T^m_{s}\|=\lim\limits_{R \to \infty}\sup\limits_{m \in \mathbb{N}, s \in[1,\infty)} \|T^m_{s}\chi^{V}_{V_m,R}-T^m_{s}\|=0$.
\end{enumerate}
The twisted Roe algebra $A((P_d(G_m);V_m)_{m \in \mathbb{N}})$ of the sequence $(P_d(G_m))_{m \in \mathbb{N}}$ is defined to be the closure of $\mathbb{A}((P_d(G_m);V_m)_{m \in \mathbb{N}})$ in $\prod_{m} C_b([0,\infty), C^*(P_d(G_m);H_{d,V_m}))$.
\end{definition}

\begin{remark}\label{rmk:projection convergence}
We remark here that though our definition of the twisted Roe algebras seems to be slightly different from the one in \cite[Definition 12.3.4]{Willett-Yu-Book}, they are in fact the same in our situation. Let $\{p_i\}_{i \in I}$ be the net of finite-rank projections on $\mathcal{L}^2_{V_m}$. For any operator $T=(T_{x,y})_{x,y\in X^m_d} \in \mathbb{C}[P_d(G_m);H_{d,V_m}]$, we assume that the set $\{(x,y)\in X^m_d \times X^m_d: T_{x,y}\neq 0\}$ is finite, i.e., $T$ is a finite matrix.
Since $p_i$ converges to identity under the strong-$*$ topology, we have that ${\rm Id}_{H}\otimes V_{\varphi_m(x)}p_i V^*_{\varphi_m(x)}$ converges to ${\rm Id}_{H \otimes \mathcal{L}^2_{V_m}}$ for each $x$. Since $T_{x,y}$ is a compact operator, we have that both $({\rm Id}_{H}\otimes V_{\varphi_m(x)}p_i V^*_{\varphi_m(x)})T_{x,y}$ and $ T_{x,y}({\rm Id}_{H}\otimes V_{\varphi_m(x)}p_i V^*_{\varphi_m(x)})$ converge to $T_{x,y}$ under the norm topology. In addition, $T$ is a finite matrix. It follows that
$$
\lim\limits_{i \to \infty}\|T-Tp_i^V\|=\lim\limits_{i \to \infty}\|T-p_i^VT\|=0.
$$
This recovers Condition (iv) of Definition $12.6.2$ in \cite{Willett-Yu-Book}.
\end{remark}

Essentially, the twisted Roe algebras can be viewed as a localized algebra in the $(V_m)_{m \in \mathbb{N}}$-direction by the condition $(2)$ in Definition \ref{Def:TwistedRoeAlgebra}. Thus, their $K$-theory have similar properties as the localization algebras.

Let $A=(A_m)_m$ be a sequence of closed subsets where each $A_m$ is a closed subset of $V_m$. We can define
$$
A((P_d(G_m);A_m)_{m \in \mathbb{N}})=(1\otimes \chi_A)A((P_d(G_m); V_m)_{m \in \mathbb{N}})(1\otimes \chi_A),
$$
where $1\otimes \chi_A=({\rm Id}_{\ell^2(X^m_d)\otimes H}\otimes \chi_{A_m})_{m \in \mathbb{N}}$ is a sequence of projections. For given two sequences of closed subsets $A=(A_m)_m,B=(B_m)_m$, denote by $A\cap B= (A_m\cap B_m)_{m \in \mathbb{N}}$ and $A\cup B= (A_m\cup B_m)_{m \in \mathbb{N}}$.
Similar to the $K$-theory of the localization algebras, following the proof of \cite[Proposition 3.11]{Yu-97} we have the following six-term exact sequence:
\begin{proposition}\label{prop:SixTermESTwisted}
Let $A=(A_m)_m,B=(B_m)_m$ be two sequences of closed subsets where $A_m, B_m\subseteq V_m$ are closed subsets for all $m \in \mathbb{N}$. Then we have the the following six-term exact sequence:
\begin{equation*}
 \begin{tikzcd}
       K_0(I_{A\cap B})\ar[r] & K_0(I_{A})\oplus K_0(I_{ B})\ar[r]& K_0(I_{A\cup B})\ar[d]\\
       K_1(I_{A\cup B})\ar[u]& K_1(I_{A})\oplus K_1(I_{B})\ar[l]& K_1(I_{A\cap B})\ar[l].
 \end{tikzcd}
\end{equation*}
where we set for brevity that $I_A=A((P_d(G_m); A_m)_{m \in \mathbb{N}})$, $I_B=A((P_d(G_m); B_m)_{m \in \mathbb{N}})$, $I_{A\cap B}=A((P_d(G_m); A_m\cap B_m)_{m \in \mathbb{N}})$ and $I_{A \cup B}=A((P_d(G_m); A_m\cup B_m)_{m \in \mathbb{N}})$
\end{proposition}

Now let us introduce the twisted localization algebras.
\begin{definition}
Let $d>0$. The algebraic twisted localization algebra $\mathbb{A}_L((P_d(G_m); V_m)_{m \in \mathbb{N}})$ is defined to be the $*$-subalgebra of uniformly bounded and uniformly norm-continuous functions
$$T(t)=(T^m_{s}(t))_{m \in \mathbb{N}}:[1,\infty) \to \mathbb{A}((P_d(G_m);H_{d, V_m})_{m \in \mathbb{N}})$$
satisfying that
$$\lim\limits_{t\to \infty}\sup\limits_{m \in\mathbb{N}, s \in [1,\infty)} {\rm Prop}_{P_d(G_m)}(T^m_{s}(t))=0.$$

The twisted localization algebra $A_L((P_d(G_m);V_m)_{m \in\mathbb{N}})$ of the sequence $(P_d(G_m))_{m\in \mathbb{N}}$ is defined to be the closure of $\mathbb{A}_L((P_d(G_m); V_m)_{m \in \mathbb{N}})$ under the norm
$$
\|T\|=\sup_{t \in [1,\infty)} \|T(t)\|_{A((P_d(G_m);V_m)_{m \in\mathbb{N}})}.
$$
\end{definition}

As each element of the twisted Roe algebras is a path in Roe algebras and each element in the twisted localization algebras is a path in localization algebras, we can consider the evaluation maps. For each fixed $s_0 \in [1,\infty)$, we define
$$
\imath^{s_0}:A((P_d(G_m); V_m)_{m\in \mathbb{N}})\to C^*_u((P_d(G_m);H_{d,V_m})_{m\in \mathbb{N}}),~~~(T^m_s)_{m \in \mathbb{N}, s \in [1,\infty)}\mapsto (T^m_{s_0})_{m \in \mathbb{N}},
$$
and
$$
\imath_L^{s_0}:A_L((P_d(G_m); V_m)_{m\in \mathbb{N}})\to C^*_{u,L}((P_d(G_m);H_{d,V_m})_{m\in \mathbb{N}}),~~~(T^m_s(t))_{m \in \mathbb{N}, s,t \in [1,\infty)}\mapsto (T^m_{s_0}(t))_{m \in \mathbb{N},t\in [1,\infty)}.
$$
Denote by $\imath^{s_0}_*$ and $\imath^{s_0}_{L,*}$ the maps induced by $\imath^{s_0}$ and $\imath^{s_0}_{L}$ on $K$-theory, respectively.

Let us now recall some properties of the twisted localization algebras. Let $A=(A_m)_m$ be a sequence of closed subsets where each $A_m$ is a closed subset of $V_m$. We define
$$
A_L((P_d(G_m); A_m)_{m \in \mathbb{N}})=(1\otimes \chi_A)A((P_d(G_m); V_m)_{m \in \mathbb{N}})(1\otimes \chi_A).
$$
We also have the following six-term exact sequence for localization algebras:
\begin{proposition}\label{prop:SixTermESLocalization}
Let $A=(A_m)_m,B=(B_m)_m$ be two sequences of closed subsets where $A_m, B_m\subseteq V_m$ are closed subset for each $m\in  \mathbb{N}$. Then we have the following six-term exact sequence:
\begin{equation*}
   \begin{tikzcd}
       K_0(L_{A\cap B})\ar[r] & K_0(L_{A})\oplus K_0(L_{ B})\ar[r]& K_0(L_{A\cup B})\ar[d]\\
       K_1(L_{A\cup B})\ar[u]& K_1(L_{A})\oplus K_1(L_{B})\ar[l]& K_1(L_{A\cap B})\ar[l],
   \end{tikzcd}
\end{equation*}
 where we set for brevity that $L_A=A_L((P_d(G_m); A_m)_{m \in \mathbb{N}})$, $L_B=A_L((P_d(G_m); B_m)_{m \in \mathbb{N}})$, $L_{A\cap B}=A_L((P_d(G_m); A_m\cap B_m)_{m \in \mathbb{N}})$ and $L_{A \cup B}=A_L((P_d(G_m); A_m\cup B_m)_{m \in \mathbb{N}})$.
\end{proposition}

Now, let us introduce the twisted coarse Baum--Connes conjecture. There is a natural evaluation map
$$
e: A_L((P_d(G_m))_m; V_m)_{m \in \mathbb{N}})\to A((P_d(G_m); V_m)_{m \in \mathbb{N}})
$$
defined by
$$
e((T^m_{s}(t))_{m \in\mathbb{N}, s, t\in [1,\infty)})=(T^m_{s}(1))_{m \in\mathbb{N}, s\in [1,\infty)}
$$
for all $(T^m_{s}(t))_{m \in\mathbb{N}, s, t\in [1,\infty)} \in A_L((P_d(G_m); V_m)_{m \in \mathbb{N}})$. As a result, there is a homomorphism
$$
e_*:\lim\limits_{d \to \infty}K_*( A_L((P_d(G_m); V_m)_{m \in \mathbb{N}})\to \lim\limits_{d \to \infty}K_*(A((P_d(G_m); V_m)_{m \in \mathbb{N}}))
$$
on $K$-theory induced by the evaluation map $e$.
The \emph{twisted Baum--Connes conjecture} claims that the above map $e_*$ is an isomorphism. We shall prove the twisted coarse Baum--Connes conjecture holds for $(G_m)_{m \in \mathbb{N}}$ under the extra condition that the sequence $(N_m)_{m \in \mathbb{N}}$ is coarsely embeddable into Hilbert space.


\subsection{The index maps}\label{Sect: Index map}

In this section, we shall first construct index maps
$$
{\rm Ind}_F: K_*(C^*((P_d(G_m));H_{d,m})_{m \in \mathbb{N}})) \to K_*(A((P_d(G_m); V_m)_{m \in \mathbb{N}})),
$$
and
$$
{\rm Ind^L}_F: K_*(C_{u,L}^*((P_d(G_m);H_{d, m})_{m \in \mathbb{N}})) \to K_*(A_L((P_d(G_m); V_m)_{m \in \mathbb{N}})).
$$

Recall that for each $m \in \mathbb{N}$ and each $d>0$, we define $H_{d,m}=\ell^2(X^m_d)\otimes H$. Let $C^*_u((P_d(G_m); H_{d,
m})_{m \in\mathbb{N}})$ and $C^*_{u,L}((P_d(G_m); H_{d,
m})_{m \in\mathbb{N}})$ be the Roe algebra and localization algebra for the sequence $(P_d(G_m))_{m\in \mathbb{N}}$ with respect to the sequence of ample modules $(H_{d, V_m})_{m \in \mathbb{N}}$.

We have the commutative diagram:
\begin{equation*}
\begin{tikzcd}
    K_*(C^*_{u,L}((P_d(G_m);H_{d,m})_{m \in \mathbb{N}}))\ar[d, "{\rm Ind}^L_F"]\ar[r,"e_*"]& K_*(C^*_{u}((P_d(G_m);H_{d, m})_m))\ar[d,"{\rm Ind}_F"]\\
    K_*(A_L((P_d(G_m); V_m)_{m \in \mathbb{N}}))\ar[r,"e_*"]\ar[d,"\imath^{s}_{L,*}"]& K_*(A((P_d(G_m); V_m)_{m \in \mathbb{N}}))\ar[d, "\imath^s_*"]\\
    K_*(C_{u,L}((P_d(G_m);H_{d,V_m})_{m \in \mathbb{N}}))\ar[r,"e_*"]&K_*(C_{u}((P_d(G_m);H_{d,V_m})_{m \in \mathbb{N}})).
\end{tikzcd}
\end{equation*}
Then we show that the composition maps $\imath_*^s\circ {\rm Ind}_F$ and $\imath_{L,*}^s\circ {\rm Ind}^L_F$ are isomorphism for any $s>0$ where $\imath^s_*$ and $\imath^s_{L,*}$ are maps on $K$-theory induced by the evaluations. As a consequence of diagram chasing, the coarse Baum--Connes conjecture is reduced to the twisted coarse Baum--Connes conjecture. We shall follow the framework introduced by Willett and Yu in \cite[Chapter 12]{Willett-Yu-Book}.

Now, let us construct the index maps.

\begin{definition}\label{def:F^m_s}
For each $s \in [1, \infty)$, let
$F^m_{s, 0}=B^m_{s,0}(1+(B^m_{s,0})^2)^{-1/2}: \mathcal{L}^2_{V_m}\to \mathcal{L}_{V_m}^2$, where $B^m_{s,0}$ is the Bott--Dirac operator on $\mathcal{L}^2_{V_m}$ defined in Section \ref{sect:prelimilary on differential geometry}.
Let $F^m_s$ be the bounded operator on $H_{d,V_m}$
defined by
$$
F^m_{s}=F^m_{s+2d_m,0}=f(B^m_{s+2d_m,0}),
$$
where $f(x)=x/\sqrt{1+x^2}$ is the function defined on $\mathbb{R}$.
Let $F^m$ be the operator on $L^2([1,\infty)\otimes H_{d, V_m})$ defined by
$F^m\xi(s)=F^m_s(\xi(s))$ for all $\xi \in L^2([1,\infty)\otimes H_{d,V_m})$. Denote $F=(F^m)_m$.
For the function $\Psi$ in Proposition \ref{prop:property of F}, we denote by $F^{\Psi}=(F^{\Psi, m})_{m\in \mathbb{N}}$, where $F^{\Psi, m}=\Psi(B^m_{s+2d_m,0})$.
\end{definition}
\begin{remark}\label{rmk:propagations of (F)}
Note that for each $m\in \mathbb{N}$ and $s \in [1,\infty)$, the $V_m$-propagation of $F_s^m$ is zero, while the $V_m$-propagation of $F_s^{\Psi, m}$ is $R_0/(s+2d_m)$ for some fixed $R_0$ Condition $(2)$ in Proposition \ref{prop:property of F}.
\end{remark}
The following result plays a crucial role to detect the continuity of a one-parameter family of operators from associated matrix entries.

\begin{proposition}[\cite{Gong-Wang-Yu Crelle 2008}]\label{prop:Gong-Wang-Yu operator norm estimate}
Let $X$ be a metric space with bounded geometry and $H_X$ an ample $X$-module. For each $R>0$, there exists $N>0$ such that each operator $T=(T_{x,y})_{x, y\in X}\in \mathbb{C}[X, H_X]$ with ${\rm Prop}_X(T)\leq R$ satisfies
$$
\|T\|\leq N\cdot \sup_{x,y \in X} \|T_{x,y}\|.
$$
\end{proposition}

For each $m \in \mathbb{N}$, each $d>0$ and each subset $[a,b]\subseteq [1, \infty)$, let
$$[a,b] \to C^*(P_d(G_m);H_{d,V_m}),~~~~s \mapsto T_s=(T_{s,x,y})_{x,y\in X^m_d}$$
be a uniformly bounded function with $\sup_{s \in [a,b]} {\rm Prop}_{P_d(G_m)}(T_s)<\infty$. As an immediate consequence of Proposition \ref{prop:Gong-Wang-Yu operator norm estimate}, $(T_s)$ is norm continuous if and only if the family of maps $$(s \mapsto T_{s,x,y})_{x,y
\in X^m_d}$$
is norm equicontinuous.

\begin{lemma}\label{lem: F as self-adjoint, odd in multiplier}
The operator $F=(F^m)_{m \in \mathbb{N}}$ is a self-adjoint, norm one, odd operator in
the multiplier algebra of $A((P_d(G_m); V_m)_{m\in \mathbb{N}})$.
\end{lemma}

\begin{proof}
Since each operator $F_s^m$ is self-adjoint, norm one and odd, so is $F=(F^m)_{m\in\mathbb{N}}$. For any $\epsilon>0$, let $\Psi: \mathbb{R} \to [-1,1]$ be the function in Proposition \ref{prop:property of F} such that
$$
\|F^m-F^{\Psi,m}\|\leq\epsilon.
$$
It suffices to show that
$$
TF^{\Psi}=(T^m_{s}F^{\Psi,m}_{s})_m\in A((P_d(G_m);V_m)_{m\in \mathbb{N}}),
$$
for all $(T^m_{s})_{m\in \mathbb{N}}\in A((P_d(G_m);V_m)_{m\in \mathbb{N}})$.

To show that $TF^{\Psi}\in A((P_d(G_m);V_m)_m)$, we need to show first that $s \mapsto T^m_sF^{\Psi}_s$ is norm continuous for each $m \in \mathbb{N}$. Let $b>0$ be any positive number. By Remark \ref{rmk:projection convergence} and the compactness of the subset $[1.b]$, for each $\epsilon>0$, $s\in [1,b]$, and $m\in \mathbb{N}$, there exists a finite rank projection $p$ on $\mathcal{L}^2_{V_m}$ such that
$$
\|T^m_s-T_s^mp^V\|< \epsilon.
$$
Since $\|F_s^{\Psi, m}\|\leq 1$, we have that
$$
\|T^m_sF^{\Psi,m}-T_s^mp^VF^{\Psi,m}\|<\epsilon.
$$
Thus it suffice to show that $s \mapsto T^m_sp^VF^{\Psi,m}_s$ is norm continuous on $[1,b]$ for each $m \in \mathbb{N}$. By Proposition \ref{prop:Gong-Wang-Yu operator norm estimate}, we need to show that each entry function $s \mapsto T^m_{s,x,y}V_{f(y)}p^V\Psi(B_{s+2d_m,0})V^*_{f(y)}$ is uniformly continuous on each compact subset $[1,b]\subseteq [1,\infty)$. This follows from the part $(6)$ of Proposition \ref{prop:property of F}. Therefore, we have that $s \mapsto T^m_sF^m_s$ is a norm continuous function on $[1, \infty)$.

Next, we shall verify the Condition $(1)$ on the $P_d(G_m)$-propagation of $TF$ in Definition \ref{Def:TwistedRoeAlgebra}. For each $m$, the $P_d(G_m)$-propagation of $F^{\Psi,m}_{s}$ is zero. Therefore, we have that
$$
\sup_{s \in [1,\infty)} \sup_m {\rm Prop}_{P_d(G_m)}\left( T^m_sF^{\Psi,m}_s\right)<\infty.
$$
By Remark \ref{rmk:propagations of (F)}, we have that
$$
\lim\limits_{s \to \infty} \sup_m {\rm Prop}_{V_m}\left( F^{\Psi,m}_s\right)=0.
$$
It follows that
$$
\lim\limits_{s \to \infty} \sup_m {\rm Prop}_{V_m}\left( T^m_sF^{\Psi,m}_s\right)=0.
$$
Condition $(3)$ of Definition \ref{Def:TwistedRoeAlgebra} follows from the fact that the $V_m$-propagation of $F^{\Psi,m}_s$ is uniformly bounded at $s$ and $m$. This finishes the proof.
\end{proof}

For each $m\in \mathbb{N}$ and each $d>0$, recall that $H_{d,m}=\ell^2(X^m_d)\otimes H$ and $H_{d,V_m}=\ell^2(X^m_d)\otimes H\otimes \mathcal{L}^2_{V_m}$. The Roe algebra $C^*_m(P_d(G_m))$ can be represented on the Hilbert space $L^2[1, \infty)\otimes H_{d,V_m}$ by
$$C^*(P_d(G_m); H_{d,m})\to B(L^2[1, \infty)\otimes H_{d,V_m}),~~~~T\mapsto {\rm Id}_{L^2[1,\infty)}\otimes T\otimes {\rm Id}_{\mathcal{L}^2_{V_m}}.$$
Accordingly, the Roe algebra $C^*_u((P_d(G_m); H_{d,m})_{m \in \mathbb{N}})$ of the sequence $(P_d(G_m))_{m \in \mathbb{N}}$ with respect to the sequence of modules $(H_{d,m})_{m \in \mathbb{N}}$ can be viewed as a $C^*$-subalgebra of $B\left(L^2([1, \infty) \otimes (\oplus_{m}H_{d,V_m})\right)$. The following results states that $C^*_u((P_d(G_m); H_{d,m})_{m \in \mathbb{N}})$ is actually a subalgebra of the multiplier algebra of $A((P_d(G_m);V_m)_{m \in \mathbb{N}})\subseteq B\left(L^2[1, \infty) \otimes (\oplus_{m} H_{d,V_m})\right)$.

\begin{lemma}\label{lem:twisted Roe subalgebra}
The Roe algebra $C^*_u((P_d(G_m); H_{d,m})_{m \in \mathbb{N}})$ is a $C^*$-subalgebra of the multiplier algebra of $A(P_d(G_m);V_m)_{m \in \mathbb{N}})$.
\end{lemma}
\begin{proof}
Let $(S^m)_{m \in \mathbb{N}}\in C^*_u((P_d(G_m); H_{d,m})_{m \in \mathbb{N}})$ and $(T^m_s)_{m\in \mathbb{N}} \in A(P_d(G_m);V_m)_{m \in \mathbb{N}})$. We shall show that $(S^mT^m_s)_m \in A(P_d(G_m);V_m)_{m \in \mathbb{N}})$. For each $m\in \mathbb{N}$, it is clear that the map $s \mapsto S^mT^m_s$ is norm continuous.

Since $\sup_{m} {\rm Prop}_{P_d(G_m)} S^m< \infty$, we have that
$$\sup_{m\in \mathbb{N},s \in [1,\infty)} {\rm Prop}_{P_d(G_m)} S^mT^m_s<\infty.$$
Therefore, Condition $(1)$ of Definition \ref{Def:TwistedRoeAlgebra} is verified. Condition $(2)$ of Definition \ref{Def:TwistedRoeAlgebra} follows from the fact that ${\rm Prop}_{V_m}(S^m)=0$ for all $m$.

Next, we shall show that $(S^mT^m_s)_m$ satisfies Condition $(3)$ in Definition \ref{Def:TwistedRoeAlgebra}.
Recall that $\chi_{V_m,R}$ and $\chi_{\varphi_m(x),R}$ are the characteristic functions of the closed ball of radius $R$ centered at $0$ and $\varphi_m(x)$ in $V_m$, respectively. On the one hand, note that
$$\|S^mT^m_s\chi_{V_m,R}^V -S^m\|\leq \|S^m\|\cdot \|T^m_s\chi_{V_m,R}^V -T^m_s\|.$$
It follow that
$$\lim\limits_{R \to \infty}\sup_{s \in [1,\infty)} \|S^mT^m_s\chi_{V_m,R}^V -S^mT^m_s\|=0.$$
We can view each element $S^m$ as a matrix $S^m=(S^m_{x,y}\otimes {\rm Id}_{\mathcal{L}^2_{V_m}})_{x,y\in X^m_d}$. Assume that
$$R\geq \sup_{m\in \mathbb{N}}\sup_{d(x,y)\leq {\rm Prop}_{P_d(G_m)}(S^m)} |\varphi_m(x)-\varphi_m(y)|.$$
Let $R_1=R- \sup_{m\in \mathbb{N}}\sup_{d(x,y)\leq {\rm Prop}_{P_d(G_m)}(S^m)} |\varphi_m(x)-\varphi_m(y)|.$
It follows from the matrix multiplication that $$
\chi_{V_m,R}^VS^mT_s=\chi_{V_m,R}^VS^m\chi_{V_m,R_1}^VT_s
$$
for all $R>0$, $m\in \mathbb{N}$ and $s \in [1,\infty)$. Therefore, by Condition $(3)$ in Definition \ref{Def:TwistedRoeAlgebra}, we have that
$$\lim\limits_{R \to \infty}\sup_{m,s} \|\chi_{V_m,R}^VS^mT^m_s -S^mT^m_s\|\leq \lim\limits_{R \to \infty}\sup_{m,s} \left(\|\chi_{V_m,R}^VS^m(\chi_{R_1}^VT^m_s-T^m_s)\|+\|S^m(\chi_{V_m,R_1}^VT^m_s -T^m_s)\|\right)=0.$$
This finishes the proof.
\end{proof}

\begin{lemma}\label{lem:T and F commutate}
For any $T=(T^m)_{m \in \mathbb{N}}\in C_u^*((P_d(G_m);H_{d,m})_{m \in \mathbb{N}})$ and $s \in [1, \infty)$, the function $s \mapsto [T, F_s]$
is in $A((P_d(G_m); V_m)_{m \in \mathbb{N}})$.
\end{lemma}

\begin{proof}
Let $T=(T^m)_{m\in \mathbb{N}}\in C^*_u((P_d(G_m);H_{d,m})_{m \in \mathbb{N}})$.
For any $\epsilon >0$ and $\Psi$ in Proposition \ref{prop:property of F}, we have that
$$
\|[T^m,F^m_s]-[T^m,F^{\Psi,m}_s]\|=\|[T^m, F^m_s-F_s^{\Psi, m}]\|\leq 2\epsilon \cdot \|T^m\|
$$
for all $m$ and $s$. It suffices to show that
$$
s \mapsto ([T^m, F^{\Psi,m}_{s}])_{m \in\mathbb{N}} \in A((P_d(G_m); V_m)_{m\in \mathbb{N}}).
$$
We have that
$$[T^m,F^{\Psi,m}_{s}]_{x,y}=T^m_{x,y}\otimes (F^{\Psi,m}_{\varphi_m(x)}-F^{\Psi,m}_{\varphi_m(y)}).$$

It follows from the part $(8)$ of Proposition \ref{prop:property of F} that for each $m$, the map $s \mapsto [T, F_s^{\Psi, m}]$ is bounded and norm continuous. Since $F^{\Psi,m}$ has $P_d(G_m)$-propagation zero and uniformly bounded $V_m$-propagation in $m \in \mathbb{N}$, the map $s \mapsto [T,F^{\Psi,m}_s]$ satisfies Condition $(1)$ and $(2)$. For Condition $(3)$ of Definition \ref{Def:TwistedRoeAlgebra}, we compute matrix coefficients
$$
((\chi_R^m)^V[T, F^{\Psi,m}_s])_{xy}=T_{xy}\otimes \chi_{\varphi_m(x),R}(F^{\Psi,m}_{s,\varphi_m(y)}-F^{\Psi, m}_{s, \varphi_m(x)}).
$$
By the part $(10)$ of Proposition \ref{prop:property of F}, we have that
$$
\lim\limits_{R \to \infty} \sup_{s\in [1, \infty)}\|[T^m,F^{\Psi,m}_s]-\chi_R^V[T^m,F^{\Psi,m}_s]\|=0.
$$
Similarly, we obtain that
$$
\lim\limits_{R \to \infty} \sup_{s\in [1, \infty)}\|[T^m,F^{\Psi,m}_s]-[T^m,F^{\Psi,m}_s]\chi_R^V\|=0.
$$
This finishes the proof.

\end{proof}

\begin{lemma}\label{lem:projections in multiplier}
Let $p=(p^m)_{m \in \mathbb{N}} \in C_u^*((P_d(G_m);H_{d, m})_{m\in \mathbb{N}})$ be a projection. Then $(pFp)^2-p$ is in the corner $pA((P_d(G_m); V_m)_{m \in\mathbb{N}})p$.
\end{lemma}
\begin{proof}
For each $m$, and each $\epsilon >0$, let $\Psi$ be the function in Proposition \ref{prop:property of F}. Since $((F^{\Psi,m}_s)^2-{\rm Id}_{H_{d,V_m}})p^m$ is close to $((F^{m}_s)^2-{\rm Id}_{H_{d,V_m}})p^m$, and $F$ and $p$ are multipliers of $A((P_d(G_m);V_m)_{m \in \mathbb{N}})$, it suffices to show that the maps $(s \mapsto (F_s^m)^2p^m-p^m)_{m \in \mathbb{N}}$ is in $A((P_d(G_m);V_m)_{m \in \mathbb{N}})$ by Lemma \ref{lem:T and F commutate}. To prove this, we consider the sequence of maps $(s \mapsto (F_s^m)^2q^m-q^m)_{m \in \mathbb{N}}$, where each $q^m$ is an approximation to $p^m$ with finite $P_d(G_m)$-propagation. It follows from the part $(7)$ of Proposition \ref{prop:property of F} that $((F^{\Psi,m}_s)^2-{\rm Id}_{H_{d,V_m}})q^m$ is norm continuous. It is routine to check Condition $(1)$-$(3)$ in Definition \ref{Def:TwistedRoeAlgebra} for these maps using Proposition \ref{prop:property of F} and finiteness of the propagation of $q^m$.
\end{proof}

Now, we are ready to define the index maps
$$
{\rm Ind}_F: K_*(C_u^*((P_d(G_m);H_{d,m})_{m \in \mathbb{N}})) \to K_*(A((P_d(G_m); V_m)_{m \in \mathbb{N}}))
$$
and
$$
{\rm Ind}^L_F: K_*(K_*(C_{u,L}^*((P_d(G_m);H_{d,m})_{m \in \mathbb{N}}))\to K_*(A((P_d(G_m). V_m)_{m \in \mathbb{N}})).
$$
\textbf{The construction of index map in $K$-theory:} Let $\mathcal{H}=\mathcal{H}^+\oplus \mathcal{H}^-$ be a graded Hilbert space with grading operator $U$ in the sense that $U$ is a self-adjoint unitary on $\mathcal{H}$ such that $\mathcal{H}^+$ and $\mathcal{H}^-$ are the eigenspaces of $\pm 1$, respectively. Let $A\subseteq B(\mathcal{H})$ be a $C^*$-subalgebra. Let $F\in B(\mathcal{H})$ be an odd operator of the form
\begin{equation*}
F=\begin{pmatrix}
0 & V \\
W & 0
\end{pmatrix}
\end{equation*}
for some operators $V: \mathcal{H}^-\to \mathcal{H}^+$ and $W: \mathcal{H}^+\to \mathcal{H}^-$. Suppose that $F$ satisfies
\begin{enumerate}[(1)]
    \item $F$ is in the multiplier algebra of $A$;
    \item $F^2-1\in A$.
\end{enumerate}
Then we can define the index class ${\rm Ind}[F]\in K_0(A)$ of $F$ to be
\begin{equation*}
    {\rm Ind}[F]=\begin{bmatrix}
(1-VW)^2 & V(1-WV) \\
W(2-VW)(1-VW) & WV(2-WV)
\end{bmatrix}
-
\begin{bmatrix}
0 & 0\\
0 & 1
\end{bmatrix}.
\end{equation*}

Let $F=(F^m)_{m \in\mathbb{N}}$ be the operator defined in Definition \ref{def:F^m_s}. For each projection $p=(p_m)_{m\in \mathbb{N}}\in C^*_u((P_d(G_m);H_{d,m})_{m \in \mathbb{N}})$, it follows from Lemma \ref{lem: F as self-adjoint, odd in multiplier}--\ref{lem:projections in multiplier} that
\begin{enumerate}[(1)]
    \item $pFp$ is an odd, self-adjoint operator in the multiplier algebra of $pA((P_d(G_m);V_m)_{m \in \mathbb{N}})p$;
    \item $(pFp)^2-p\in pA((P_d(G_m);V_m)_{m \in \mathbb{N}})p$.
\end{enumerate}
Following the above standard construction of index maps, we obtain an index class
$$
{\rm Ind}(pFp)\in K_0(pA((P_d(G_m);V_m)_{m \in \mathbb{N}})p).
$$
It is routine to verify that the map
$$
{\rm Ind}: K_0(C^*_u((P_d(G_m);H_{d,m})_{m \in \mathbb{N}}))\to K_0(pA((P_d(G_m);V_m)_{m \in \mathbb{N}})p),~~[p]\mapsto {\rm Ind}(pFp)
$$
is a homomorphism.
Composing with the map
$$
\imath_*:K_0(pA((P_d(G_m);V_m)_{m\in \mathbb{N}})p)\hookrightarrow K_0(A((P_d(G_m);V_m)_{m \in \mathbb{N}}))
$$
induced by the inclusion $\imath$ of the $C^*$-subalgebra of $pA((P_d(G_m);V_m)_{m \in \mathbb{N}})p$ into $A((P_d(G_m);V_m)_{m \in \mathbb{N}})$, we obtain a homomorphism
$$
{\rm Ind}_F:K_0(C^*_u((P_d(G_m);H_{d,m})_{m \in \mathbb{N}}))\to K_0(A((P_d(G_m);V_m)_{m \in \mathbb{N}})),~~[p]\mapsto \imath_*\circ {\rm Ind}(pFp).
$$
This map is called the index map.

Passing to suspension and applying the above construction pointwise, we can also define the index maps on the level of $K_1$-groups of twisted Roe algebras and the $K$-theory of the twisted localization algebras. Now we are ready to prove the main result in this section.
\begin{proposition}\label{prop:index map iso}
Let $d>0$. For any fixed $s\in [1,\infty)$, the compositions
$$
K_*(C^*_u(P_d(G_m);H_{d,m})_{m\in \mathbb{N}})) \xrightarrow{{\rm Ind}_F} K_*(A((P_d(G_m); V_m)_{m\in \mathbb{N}}))
\xrightarrow{\imath^s_*}K_*(C^*_u((P_d(G_m);H_{d,V_m})_{m\in \mathbb{N}}))
$$
and
$$
K_*(C^*_{u,L}((P_d(G_m);H_{d,m})_{m\in \mathbb{N}})) \xrightarrow{{\rm Ind}^L_F} K_*(A_L((P_d(G_m);V_m)_{m\in \mathbb{N}}))
\xrightarrow{\imath^s_{L,*}}K_*(C_{u,L}(P_d(G_m);H_{d,V_m})_{m\in \mathbb{N}})
$$
are isomorphisms.
\end{proposition}

\begin{proof}
We will focus on the case of twisted Roe algebras and the localized one follows from the same arguments.

For simplicity of notations, we still denote the compositions $\imath^s_*\circ {\rm Ind}_F$ and $\imath^s_{L,*}\circ {\rm Ind}^L_F$ as ${\rm Ind}_F$ and ${\rm Ind}^L_F$, respectively. For each $m\in \mathbb{N}$, define a map $\kappa_m:V_m\to V_m$ by
\begin{equation*}
  \kappa_m(x)=\begin{cases}
        \frac{x}{\|x\|}(\|x\|-1)  & \|x\|\geq 1\\
        0                                     &\|x\|<1.
  \end{cases}
\end{equation*}
For all $m, n \in \mathbb{N}$, we consider the following maps:
$$
F^{m,\infty}_{s}: H_{d,V_m}\to H_{d,V_m},\quad \delta_x\otimes \xi\otimes u\mapsto \delta_x\otimes \xi\otimes F^{m}_{s+2d_m,0}(u),
$$
$$
F^{m,n}_{s}: H_{d,V_m}\to H_{d,V_m},\quad \delta_x\otimes \xi\otimes u\mapsto \delta_x\otimes \xi\otimes F^{m}_{s+2d_m,\kappa_m^n(\varphi_m(x))}(u),
$$
and
$$
F^{m,0}_{s}: H_{d,V_m}\to H_{d,V_m},\quad \delta_x\otimes \xi\otimes u\mapsto \delta_x\otimes \xi\otimes F^{m}_{s+2d_m,\varphi_m(x)}(u).
$$
For simplicity, we denote
$$
F^{(n)}=(F^{m,n}_{s})_{m \in\mathbb{N}},~~~F^{(\infty)}=(F^{m,\infty}_{s+2d_m})_{m \in\mathbb{N}},~~~\mbox{and}~~~F^{(0)}=(F^{m,0}_{s+2d_m})_{m \in\mathbb{N}}.
$$
Note that all the operators $F^{(n)}$ for $n \in \mathbb{N}\cup \{0,\infty\}$ are multipliers of $C^*_u(P_d(G_m);H_{d, V_m})$.

%
By Lemma \ref{lem: F as self-adjoint, odd in multiplier}--\ref{lem:projections in multiplier}, for each $n\in\mathbb{N}\cup\{0,\infty\}$, and each projection $p=(p_m)_{m \in\mathbb{N}}\in C^*_u((P_d(G_m);H_{d,m})_{m \in\mathbb{N}})$, we have that
\begin{itemize}
    \item $pF^{(n)}p$ is an odd, self-adjoint operator in the multiplier algebra of $pA((P_d(G_m);V_m)_{m \in \mathbb{N}})p$;
    \item $(pF^{(n)}p)^2-p\in pA((P_d(G_m);V_m)_{m \in \mathbb{N}})p$.
\end{itemize}
Following the construction of index maps, we obtain an index map for each $n\in \mathbb{N}\cup\{0,\infty\}$. Composing the obtained map with the map $\imath^s_*$ induced by the evaluation map, we get a homomorphism
$$
{\rm Ind}_{F^{(n)}}:K_*(C^*_u((P_d(G_m);H_{d,m})_{m \in\mathbb{N}}))\to
K_*(C^*_u((P_d(G_m);H_{d,V_m})_{m \in\mathbb{N}})).
$$
for each $n\in\mathbb{N}\cup\{0,\infty\}$. To prove ${\rm Ind}_{F^{(0)}}={\rm Ind}_{F}$ is an isomorphism, we will first show that ${\rm Ind}_{F^{(\infty)}}={\rm Ind}_{F^{(0)}}$, then we show that ${\rm Ind}_{F^{(\infty)}}$ is an isomorphism.

Now, let us show that ${\rm Ind}_{F^{(\infty)}}={\rm Ind}_{F^{(0)}}$. It suffices to show that ${\rm Ind}_{F^{(\infty)}}[p]={\rm Ind}_{F^{(0)}}[p]$ for each projection $p=(p_m)_{m \in\mathbb{N}}\in C^*_u((P_d(G_m);H_{d,V_m})_{m \in\mathbb{N}})$.

For each $m\in \mathbb{N}$, denote by $(\mathcal{L}^2_{V_m})^{\oplus \infty}$ the direct sum of infinity many copies of $\mathcal{L}^2_{V_m}$.
Define
$$H^{\infty}_{d,V_m}=\ell^2(X^m_d)\otimes H\otimes (\mathcal{L}^2_{V_m})^{\oplus \infty}.$$
Let $C^*_u((P_d(G_m);H^{\infty}_{d,V_m})_{m \in \mathbb{N}})$ be the Roe algebra of $(P_d(G_m))_{m \in \mathbb{N}}$ with respect to the modules $(H^{\infty}_{d,V_m})_{m \in \mathbb{N}}$. Define an inclusion map
$$\imath: C^*_u((P_d(G_m);H_{d,V_m})_{m \in \mathbb{N}})\to C^*_u((P_d(G_m);H^{\infty}_{d,V_m})_{m \in \mathbb{N}}),~~~(T_m)_{m\in \mathbb{N}}\mapsto ({\rm diag}(T_m,0,0\cdots))_{m\in \mathbb{N}}.$$
It is obvious that the map $\imath$ is induced by the sequence of isometries covering the identity maps of the sequence $(P_d(G_m))_{m \in \mathbb{N}}$, thus it induces an isomorphism on $K$-theory.

For each $n$ and each projection $p\in C^*_u((P_d(G_m);H_{d,m})_{m \in\mathbb{N}})$, the $K$-theory class ${\rm Ind}_{F^{(n)}}[p]$ can be represented by a difference of projections
$${\rm Ind}_{F^{(n)}}[p]=[(p^{(n)}_m)_{m\in\mathbb{N}}]-[(q^{(n)}_m)_{m\in \mathbb{N}}],$$
where $(p^{(n)}_m)_{m\in \mathbb{N}}\in M_2(pC^*_u(P_d(G_m);H_{d,V_m})_{m \in \mathbb{N}})p)$.
By the construction of the index maps, each projection $(q^{(n)}_m)_{m \in\mathbb{N}}$ consists a sequence of scalar matrix thus does not depends on the $F^{(n)}$.

Now, we consider the projections
$$
{\rm diag}((p^{(0)}_m)_m,(p^{(1)}_m)_m ,,(p^{(2)}_m)_m,\cdots), \mbox{~~and~~}\quad {\rm diag}((p^{(\infty)}_m)_m,(p^{(\infty)}_m)_m ,,(p^{(\infty)}_m)_m,\cdots)
$$
in the multiplier algebra of $M_2(C^*_u((P_d(G_m);H_{d,V_m}^{\infty})_{m \in \mathbb{N}}))$. For every element $x \in V_m$, there exists a suitably large $n$ such that $\kappa^n_m(x)=0$. As a result, the difference of the above two element is an element in $M_2(C^*_u((P_d(G_m);H_{d,V_m}^{\infty})_{m \in \mathbb{N}}))$. Therefore,
$$
a=\left[{\rm diag}((p^{(0)}_m)_m,(p^{(1)}_m)_m ,(p^{(2)}_m)_m,\cdots)\right]-\left[{\rm diag}((p^{(\infty)}_m)_m,(p^{(\infty)}_m)_m, (p^{(\infty)}_m)_m,\cdots)\right]
$$
defines an element in $K_*(C^*_u((P_d(G_m);H^{\infty}_{d,V_m})_{m \in \mathbb{N}}))$.
On the other hand, we can consider the following $K$-theory class
$$
b=\left[{\rm diag}((p^{(0)}_m)_m,0,0,\cdots)\right]-\left[{\rm diag}((p^{(\infty)}_m)_m,0,0,\cdots)\right],
$$
in $K_0(C^*_u(P_d(G_m);H^{\infty}_{d,V_m})_{m \in \mathbb{N}})$. We claim that $a+b=a$, whence $b=0$. Consider the path $(F^{(n),r})_{r \in [0,1]}$ of operators on $H_{d,V_m}$ defined by
$$
F_s^{m,n,r}: \delta_x\otimes \xi\otimes u \mapsto \delta_x\otimes \xi\otimes F_{s+2d_m,(1-r)\kappa^n_m(\varphi_m(x))+r\kappa^{n+1}_m(\varphi_m(x))}u.
$$
By Proposition \ref{prop:property of F}, we have that
$$
\|F_s^{m,n,r}-F_s^{m,n,r'}\|\leq 3|r-r'|
$$
for all $r,r'\in [0,1]$ and $n,m\in \mathbb{N}$ and $s \in [1,\infty)$. Thus $({\rm Ind}_{F^{(n),r}}[p])=[(p^{(n),r}_m)_{m}]-[(q_m)_m]$ where $(p^{(n),r}_m)_m$ is a uniformly continuous path connecting $(p^{(n)})_m$ and $(p^{(n+1)})_m$, and $(q_m)_m$ is the sequence of scalr metrices in the definition of index maps. As a result, we have that
$$
a=\left[{\rm diag}((p^{(1)}_m)_m,(p^{(2)}_m)_m ,(p^{(3)}_m)_m,\cdots)\right]-\left[{\rm diag}((p^{(\infty)}_m)_m,(p^{(\infty)}_m)_m, (p^{(\infty)}_m)_m,\cdots)\right].
$$
In addition, we have that
$$
a=\left[{\rm diag}(0,(p^{(1)}_m)_m,(p^{(2)}_m)_m ,(p^{(3)}_m)_m,\cdots)\right]-\left[{\rm diag}(0,(p^{(\infty)}_m)_m,(p^{(\infty)}_m)_m, (p^{(\infty)}_m)_m,\cdots)\right].
$$
Thus, we have that $a+b=a$. It follows that $b=0$. Therefore, ${\rm Ind}_{F^{(0)}}[p]={\rm Ind}_{F^{(\infty)}}[p]$.


Next, we will conclude the proof by showing that ${\rm Ind}_{F^{(\infty)}}={\rm Id}$. For each $m$, let $p^m_0$ be the projection onto the one-dimensional kernel of $F^m_{s,0}=f((s+2d_m)^{-1}D+C)$, where $f=x(1+x^2)^{-1/2}$.
For all $m \in \mathbb{N}$ and $r\in [1,\infty)$, consider the path
$$
E^m_r=  f(r((s+2d_m)^{-1})D+C))
$$
When $r=\infty$, each $E^m_{\infty}$ is a constant path of operators which decomposes with respect to the grading as
\begin{equation*}
E^m_{\infty}=\begin{bmatrix}
0 & 1\\
1-p_0^m & 0
\end{bmatrix}.
\end{equation*}
It is well-known that the spectrum of $(B^m_{s+2d_m,0})^2$ is the set $\{2n/\sqrt{s+2d_m}:n\in \mathbb{Z}, n\geq 0\}$ for each $m$ and $s$. As a result, the tuple of operators $(B^m_{s+2d_m,0})_{m \in \mathbb{N}} $ might have no spectrum gap. Thus $((E^m_r)_{m\in \mathbb{N}})_{r\in [0,1]}$  is not necessary a continuous path in $r$, despite each path $(E^{m}_r)_{r\in [0,1]}$ is continuous for $m \in \mathbb{N}$.
Denote by $E_r=((E^m_r)_{r \in [1, \infty)})_{m \in \mathbb{N}}$. Following the previous construction of index maps, we obtain an index map
$$
{\rm Ind}_{E_r}: K_*(C^*_u((P_d(G_m);H_{d,m})_{m \in\mathbb{N}}))\to
K_*(C^*_u((P_d(G_m);H_{d,V_m})_{m \in\mathbb{N}})).
$$
for each $r \in [1,\infty)\cup\{\infty\}$.
By the definition of index maps, we have that
$${\rm Ind}_{E_{\infty}}[(p^m)_{m\in \mathbb{N}}]=[(p^m\otimes p^m_0)]$$
for all $[(p^m)_{m\in \mathbb{N}}] \in K_*(C_u^*(P_d(G_m);H_{d,m})_{m \in \mathbb{N}})$. Consider the isometry
$$V^m: \ell^2(X^m_d)\otimes H\to \ell^2(X^m_d)\otimes H\otimes \mathcal{L}^2_{V_m},~~~~\delta_x\otimes v \mapsto \delta_x\otimes v\otimes v^m_0$$
where $v^m_0\in \mathcal{L}^2_{V_m}$ is the unit vector in the kernel of the operator $B^m_{s+2d_m,0}$. It is obvious that the sequence of isometries $(V^m_{m \in \mathbb{N}})$ covers the identity maps on the sequence $(P_d(G_m))_{m \in \mathbb{N}}$. Moreover, the map ${\rm Ind}_{E_{\infty}}$ is induced by the isometries $(V^m)_{m \in \mathbb{N}}$ on $K$-theory, therefore ${\rm Ind}_{E_{\infty}}={\rm Id}$.

Next, we shall show that ${\rm Ind}_{E_1}={\rm Ind}_{E_{\infty}}$ using the same arguments in the previous claim. It suffices to prove that ${\rm Ind}_{E_1}([p])={\rm Ind}_{E_{\infty}}([p])$ for all projection $p\in C^*_u((P_d(G_m);H_{d,m})_{m \in\mathbb{N}})$. For each projection  $p\in C^*_u((P_d(G_m);H_{d,m})_{m \in \mathbb{N}})$ and each $r \in [1,\infty)$, we can express
$${\rm Ind}_{E_r}[p]=[(p^m_r)_m]-[(q^m)_{m \in \mathbb{N}}].$$
Again, since each $q^m$ is a scalar projections, we only need to show that
$$[(p^m_1)_m]=[(p^m_{\infty})_m]$$
in $K_0(C^*_u((P_d(G_m);H_{d,V_m})_{m \in \mathbb{N}}))$. Recall that we considered the map
$$\imath: C^*_u((P_d(G_m);H_{d,V_m})_{m \in \mathbb{N}})\to C^*_u((P_d(G_m);H^{\infty}_{d,V_m})_{m \in \mathbb{N}}),~~~(T_m)_{m\in \mathbb{N}}\mapsto ({\rm diag}(T_m,0,0\cdots))_{m\in \mathbb{N}}.$$
Since this map induces an isomorphism on $K$-theory, we need to show that $$\imath_*([(p^m_1)_m])=\imath_*([(p^m_{\infty})_m])$$

Note that for each $m\in \mathbb{N}$, $(E^m_r)_{r \in [1,\infty)}$ is a norm continuous path. By the definition of index maps, each path of projections $(p_{r}^m)_{r \in [1,\infty)}$ is uniformly continuous in $r$ in $M_2(C^*(P_d(G_m);H_{d,V_m})$. Therefore, there exist finitely many constant $1=r_1<r_2<r_2<\cdots<r_{k_m}$ for each $m\in \mathbb{N}$ such that
\begin{equation}\label{inequality}
\|p^{m}_{r_i}-p^m_{r_{i+1}}\|<1/2
\end{equation}
for all $i=1,2\cdots, k_m-1$. For brevity, we set $P^m_i=p^m_{r_i}$ for all $i=1,2\cdots, k_m$, $P^m_{i}=p^m_{r_{k_{m}}}$ for all $i\geq k_m$, and $P^m_{\infty}=p^m_{\infty}$ for all $m \in \mathbb{N}$. Consider the projections
$$
{\rm diag}((P^m_1)_m,(P^m_2)_m,\cdots), ~~~~~~\mbox{and}~~~~ {\rm diag}((P^m_{\infty})_m,(P_{\infty}^m)_m,\cdots).
$$
By definitions, their difference is in the Roe algebra  $C^*_u(P_d(G_m);H^{\infty}_{d,V_m})_{m \in\mathbb{N}}$.
Thus we obtain the following $K$-theory class in  $K_*(C^*_u(P_d(G_m);H^{\infty}_{d,V_m})_{m \in\mathbb{N}})$:
$$
c=\left[{\rm diag}((P^m_1)_m,(P^m_2)_m,\cdots)\right]-\left[{\rm diag}((P^m_{\infty})_m,(P_{\infty}^m)_m,\cdots)\right].
$$
Denote
$$
d=\left[{\rm diag}((P^m_1)_m,0,0,\cdots)\right]-\left[{\rm diag}((P^m_{\infty})_m,0,0,\cdots)\right].
$$
To prove $\imath_*([(p^m_1)_m])=\imath_*([(p^m_{\infty})_m])$, it suffices to show that $d=0$.
By the inequality (\ref{inequality}) above, we have that
$$
\|{\rm diag}(P^m_1,P^m_2,\cdots)-{\rm diag}(P^m_2,P^m_3,\cdots)\|< 1/2
$$
for all $m\in\mathbb{N}$. As a result we have that
$$
\|{\rm diag}((P^m_1)_{m \in \mathbb{N}},(P^m_2)_{m \in \mathbb{N}},\cdots)-{\rm diag}((P^m_2)_{m \in \mathbb{N}},(P^m_3)_{m \in \mathbb{N}},\cdots)\|< 1/2
$$
Recall that in a $C^*$-algebra $\mathcal{A}$, for any two projections $p, q\in \mathcal{A}$, if $\|p-q\|<1/2$, then $[p]=[q]\in K_0(\mathcal{A})$. Accordingly, we have that
\begin{align*}
  c= & \left[{\rm diag}((P^m_1)_{m},(P^m_2)_{m},\cdots)\right]-\left[{\rm diag}((P^m_{\infty})_m,(P_{\infty}^m)_m,\cdots)\right]\\
= & \left[{\rm diag}((P^m_2)_{m},(P^m_3)_{m},\cdots)\right]-\left[{\rm diag}((P^m_{\infty})_m,(P_{\infty}^m)_m,\cdots)\right]\\
= & \left[{\rm diag}(0,(P^m_2)_{m},(P^m_3)_{m},\cdots)\right]-\left[{\rm diag}(0,(P^m_{\infty})_m,(P_{\infty}^m)_m,\cdots)\right]+\left[{\rm diag}(P^m_1)_m,0,\cdots)]+[{\rm diag}(P^m_{\infty})_m,0,\cdots)\right]\\
= & c+d
\end{align*}
Therefore, we have that $d=0$. Thus, ${\rm Ind}_{E_1}={\rm Ind}_{E_{\infty}}$. This finishes the proof.
\end{proof}

\section{The twisted coarse Baum--Connes conjecture}\label{Sec: twisted Baum-Connes}

In this section, we shall first study the $K$-theory of the twisted Roe algebras and localization algebras. Then we prove the twisted coarse Baum--Connes conjecture using the cutting-and-pasting method.

The following is the main result of this section:
\begin{proposition}\label{prop:twistedCBC}
Let $(1 \to N_m\to G_m \to Q_m\to 1)_{m \in \mathbb{N}}$ be a sequence of short exact sequences of finite groups such that $(N_m)_m$, $(G_m)_m$ and $(Q_m)_m$ have bounded geometry. Assume that the sequences $(N_m)_m$ and $(Q_m)_{m\in\mathbb{N}}$ are uniformly coarsely embeddable into Hilbert space. If $(\varphi_m: Q_m\to V_m)_{m \in\mathbb{N}}$ is the sequence of coarse embeddings where each $V_m$ is an Euclidean space of  even-dimension, then the map
$$
e_*:\lim\limits_{d \to \infty}K_*( A_L(P_d(G_m); V_m)_{m \in \mathbb{N}})\to \lim\limits_{d \to \infty} K_*(A(P_d(G_m); V_m)_{m \in \mathbb{N}})
$$
induced by the evaluation map $e$ on $K$-theory is an isomorphism.
\end{proposition}

To prove this result, we need to discuss the $K$-theory of those twisted algebras associated with closed subsets of $V_m$. Let $B=(B_m)_{m \in \mathbb{N}}$ be a sequence of subsets such that each $B_m$ is a closed subset of $V_m$. Recall that
$$
A((P_d(G_m); B_m)_{m \in \mathbb{N}})=(1\otimes \chi_B)A((P_d(G_m); V_m)_{m \in \mathbb{N}})(1\otimes \chi_B)
$$
and
$$
A_L((P_d(G_m); B_m)_{m \in \mathbb{N}})=(1\otimes \chi_B)A((P_d(G_m); V_m)_{m \in \mathbb{N}})(1\otimes \chi_B).
$$

In addition, we assume that for each $m\in \mathbb{N}$ there exists a constant $r>0$ and a finite subset $\{x^i_m\in G_m: 1\leq i \leq k_m\}$ such that
\begin{equation}\label{condition:decomp}
B_m\subseteq\bigsqcup^{k_m}_{i=1}B_r(\varphi_m(x^i_mN_m), V_m),
\end{equation}
where $B_r(\varphi_m(x^i_mN_m), V_m)=\{v\in V_m:\|v-\varphi_m(x^i_mN_m)\|\leq r\}$.

\begin{proposition}\label{prop:local-iso}
Let $B=(B_m)_{m \in \mathbb{N}}$ be a sequence of closed subsets satisfying the above condition (\ref{condition:decomp}). If the sequence $(N_m)_{m \in \mathbb{N}}$ is coarsely embeddable into Hilbert space, then the map
$$
e_*:\lim\limits_{d \to \infty}K_*(A_L((P_d(G_m);B_m)_{m \in\mathbb{N}}) \to \lim\limits_{d \to \infty}K_*(A((P_d(G_m);B_m)_{m \in \mathbb{N}}))
$$
induced by the evaluation map on $K$-theory is an isomorphism.

\end{proposition}
We shall prove this proposition later. Now let us use it to prove Proposition \ref{prop:twistedCBC}.

\begin{proof}[Proof of Proposition \ref{prop:twistedCBC}]
For each $r>0$ and for each $m$, let $B_{m,r}=\cup_{x \in Q_m}B_r(\varphi_m(x),V_m)\subseteq V_m$,
where $(\varphi_m:Q_m \rightarrow V_m)_{m \in\mathbb{N}}$ is the sequence of coarse embeddings and
$$B_r(\varphi_m(x),V_m)=\{h\in V_m:\|h-\varphi_m(x)\|\leq r\}.$$
For any fixed $r>0$, denote by $(B_{m,r})_{m\in\mathbb{N}}$ the sequence of closed subsets. By the condition $(3)$ of Definition \ref{Def:TwistedRoeAlgebra}, we have
$$A((P_d(G_m); V_m)_{m\in\mathbb{N}})=\lim\limits_{r\to \infty}A((P_d(G_m); B_{m,r})_{m\in \mathbb{N}}),$$
and
$$
A_{L}^*((P_d(G_m); V_m)_{m\in \mathbb{N}})=\lim\limits_{r\to \infty}A_{L}^*((P_d(G_m); B_{m,r})_{m\in \mathbb{N}}).
$$
So, it suffices to show that, for any $r>0$ the map
$$e_{*}:\lim\limits_{d \to \infty}A_L((P_d(G_m); B_{m,r})_{m \in \mathbb{N}})=\lim\limits_{d\to \infty}A((P_d(G_m); B_{m,r})_{m \in \mathbb{N}})$$
induced by the evaluation-at-one map on $K$-theory is an isomorphism.

Since the sequence $(Q_m)_{m \in \mathbb{N}}$ has bounded geometry property and $(\varphi_m:Q_m \to V_m)_{m \in \mathbb{N}}$ is the coarse embedding, there exists a positive integer $n_r>0$ such that
\begin{enumerate}[(1)]
    \item each $Q_m$ is a disjoint union $Q_m=\bigsqcup_{i=1}^{n_r} \Lambda_{m,i}$;
    \item for each $i$ and each $m$, $d(\varphi_m(g),\varphi_m(g')))>2r$ for different elements $g$ and $g'$ in $\Lambda_{m,i}$.
\end{enumerate}

Set $B_{m,r,i}=\bigsqcup_{g\in \Lambda_{m,i}} B_r(\varphi_m(g); V_m)$. We obtain a sequence of closed sets $\left(B_{m,r,i}\right)_{m\in \mathbb{N}}$ for each $r$ and each $i$. By Proposition \ref{prop:local-iso}, we have that the map
$$e_{*}:\lim\limits_{d\to \infty}K_*(A_{L}^*((P_d(G_m); B_{m,r,i})_{m, i})) \rightarrow \lim\limits_{d\to \infty}K_*(A((P_d(G_m); B_{m,r,i})_{m, i}))$$
induced by the evaluation map on $K$-theory is an isomorphism for each $r>0$ and $1\leq i\leq n_r$. By the Mayer--Vietoris sequences in Proposition \ref{prop:SixTermESTwisted}, Proposition \ref{prop:SixTermESLocalization} and the five lemma, the map
$$e_{*}:\lim\limits_{d \to \infty}A_L((P_d(G_m);B_{m,r})_{m\in \mathbb{N}})=\lim\limits_{d\to \infty}A^*((P_d(G_m);B_{m,r})_{m\in \mathbb{N}})$$
induced by the evaluation-at-zero map on $K$-theory is an isomorphism.

Passing to infinity, we have that the map
$$e_{*}:\lim\limits_{d\to \infty}K_*(A_{L}^*((P_d(G_m); V_m)_{m \in \mathbb{N}})) \to \lim\limits_{d\to \infty}K_*(A^*((P_d(G_m); V_m)_{m \in \mathbb{N}}))$$
induced by the evaluation-at-one map on $K$-theory is an isomorphism. This finishes the proof of Proposition \ref{prop:twistedCBC}.
\end{proof}

\noindent{\textbf{Sketch of the proof of Proposition \ref{prop:local-iso}:}} We conclude this section by giving a sketch of the proof of Proposition \ref{prop:local-iso} using the coarse embeddability of the sequence $(N_m)_{m \in\mathbb{N}}$.
To prove Proposition \ref{prop:local-iso}, we analyze the $K$-theory of the $C^*$-algebras $A((P_d(G_m); B_m)_{m \in \mathbb{N}})$ and $A_L((P_d(G_m); B_m)_{m \in \mathbb{N}})$. We shall first connect these two $C^*$-algebra with some twisted Roe algebras and localization algebras of some sequence $(gN_m:g \in G_m)_{g,m}$ using the coarse embeddability of $(N_m)_{m}$. Proposition \ref{prop:local-iso} is then a result of some twisted Baum--Connes conjecture for some sequence of cosets $(gN_m)_{g,m}$ which can be proved using the cutting-and-pasting method.

Let $r>0$ be a fixed positive constant in Proposition \ref{prop:local-iso}. Recall   that in previous section we chose a sequence of closed subsets $(B_m)_{m \in \mathbb{N}}$ such that each $B_m$ is a disjoint union
$$B_m\subseteq\bigsqcup^{k_m}_{i=1}B_r(\varphi_m(x^i_mN_m), V_m),
$$
where each $\{x^i_m\in G_m: 1\leq i \leq k_m\}$ is a subset of $G_m$. Moreover, we assume that $B_{m}=\bigsqcup_{i=1}^{k_m} B_{m.i}$ where each $B_{m,i}\subseteq B_r(\varphi_m(x_m^iN_m), V_m)$ is a nonempty closed subset for all $1\leq i\leq k_m$ and $m \in \mathbb{N}$.

Denote by $X^R_{m,i}=B_R(x_m^iN_m, G_m)$ and $X^{d,R}_{m,i}=P_d(X^R_{m,i})\cap X^m_d$, where $X_d^m$ is the countable dense subset of $P_d(G_m)$ in the definition of Roe algebras of the space $P_d(G_m)$. Note that each $X^{d,R}_{m,i}$ is a countable dense subset of $P_d(X^R_{m,i})$. Consider the ample module
$$H_{d,B_{m,i}}=\ell^2(X^{d,R}_{m,i})\otimes H\otimes \mathcal{L}^2_{B_{m,i}},$$
where $\mathcal{L}^2_{B_{m,i}}$ is the Hilbert space of square-integrable functions from
$B_{m,i}$ to the complxified Clifford algebra ${\rm Cliff}_{\mathbb{C}}(V_m)$. Note that this Hilbert space is an ample $P_d(X^R_{m,i})$-module and an ample $B_{m,i}$-module, respectively. Denote by $C^*(P_d(X^R_{m,i});H_{d, B_{m,i}})$ the algebra of the $P_d(X^R_{m,i})$ with respect to the module $H_{d,B_{m,i}}$.  In fact, the Hilbert space $H_{d,B_{m,i}}$ is a subspace of $H_{d,V_m}$ by $H_{d,B_{m,i}}=(\chi_{P_d(X^R_{m,i})} \otimes {\rm Id}_H\otimes \chi_{B_{m,i}}) H_{d,V_m}$.

For simplicity, we denote by $I$ the set of all indices $(m,i)$, $I=\{(m,i):m \in \mathbb{N}, 1\leq i\leq k_m\}$, and we will use both the Greek letter $\lambda$ and $(m,i)$ to denote an element in $I$ in the rest of this section.

We define the $C^*$-algebra $A(P_d(X^R_{\lambda});B_{\lambda})_{\lambda\in I})$ is the completion of the $*$-subalgebra of $B(\oplus_{\lambda\in I} H_{d,B_{\lambda}})$ consisting of all element $(T^{\lambda}_s)_{\lambda\in I, s\in [1,\infty)}=(T^{\lambda}_{s,x,y})_{\lambda\in I,s\in [1,\infty),x,y \in X^{d,R}_{\lambda}}$ satisfying
\begin{itemize}
    \item $\sup\limits_{\lambda\in I, s\in [1,\infty)} {\rm Prop}_{P_d(X^R_{\lambda})} (T_s^{\lambda})< \infty$;
    \item $\lim\limits_{s \to \infty}\sup_{\lambda\in I} {\rm Prop}_{B_{\lambda}} (T^{\lambda}_s)=0$.
\end{itemize}
Note that $A(P_d(X^R_{\lambda});B_{\lambda})_{\lambda\in I})$ is a $C^*$-subalgebra of the twisted Roe algebra $A((P_d(G_m);H_{d,V_m})_{m \in\mathbb{N}})$. Indeed, for each element $(T^{m,i}_s)_{(m,i)\in I}\in A((P_d(X^R_{m,i});B_{m,i})_{(m,i)\in I})=A((P_d(X^R_{\lambda});B_{\lambda})_{\lambda\in I})$, define
$$T^m_s=\oplus_{i=1}^{k_m} T^{m,i}_s$$
for all $m\in \mathbb{N}$. Then $(T^m_s)_{m \in\mathbb{N}}$ is an element in $A((P_d(G_m);V_m)_{m \in\mathbb{N}})$.

The localization algebra $A_L(P_d(X^R_{\lambda});B_{\lambda})_{\lambda\in I})$ is the completion of all uniformly continuous and uniformly bounded function $T:[1,\infty)\to A(P_d(X^R_{\lambda});B_{\lambda})_{\lambda\in I})$ with $\lim\limits_{t\to \infty} {\rm Prop}_{P_d(X^R_{\lambda})}(T(t))=0$ under the norm
$$\|(T(t))_{t\in[1,\infty)}\|=\sup_{t\in [1 \infty)}\|T(t)\|.$$
We also have that the localization algebra $A_L(P_d(X^R_{\lambda});B_{\lambda})_{\lambda\in I})$ is a $C^*$-subalgebra of the twisted localization algebra $A_L((P_d(G_m);H_{d,V_m})_{m \in\mathbb{N}})$.

Comparing with Definition \ref{Def:TwistedRoeAlgebra}, the above definition of the Roe algebra $A(P_d(X^R_{\lambda});B_{\lambda})_{\lambda\in I})$ satisfies Condition $(3)$ in Definition \ref{Def:TwistedRoeAlgebra} since $B_{m,i} \subseteq B_r(\varphi_m(x_m), V_m)$ for all $(m,i)\in I$. The following result is a consequence of Condition $(3)$ in Definition \ref{Def:TwistedRoeAlgebra}.
\begin{lemma}
We have that
$$
A((P_d(G_m); V_m)_{m \in \mathbb{N}})\cong \lim\limits_{R \to \infty} A(P_d(X^R_{\lambda});B_{\lambda})_{\lambda\in I}),
$$
and
$$
A_L((P_d(G_m); V_m)_{m \in \mathbb{N}})\cong \lim\limits_{R \to \infty} A_L(P_d(X^R_{\lambda});B_{\lambda})_{\lambda\in I}).
$$
\end{lemma}

It is easy to verify that the following diagram is commutative:
\begin{equation*}
\begin{tikzcd}
A_L(P_d(X^R_{\lambda});B_{\lambda})_{\lambda\in I})\arrow[r,"e"] \arrow[d]&A(P_d(X^R_{\lambda});B_{\lambda})_{\lambda\in I})\arrow[d]\\
A_L((P_d(G_m); V_m)_{m \in \mathbb{N}})\arrow[r,"e"]& A((P_d(G_m); V_m)_{m \in \mathbb{N}})\\
\end{tikzcd}
\end{equation*}
where the horizontal maps are defined by evaluate the path at one. Moreover, we have
$$
\lim\limits_{d \to \infty}\lim\limits_{R\to \infty} A((P_d(X^R_{\lambda});B_{\lambda})_{\lambda\in I})=\lim\limits_{R\to \infty}\lim\limits_{d \to \infty} A((P_d(X^R_{\lambda});B_{\lambda})_{\lambda\in I}),
$$
and
$$
\lim\limits_{d \to \infty}\lim\limits_{R\to \infty} A_L((P_d(X^R_{\lambda});B_{\lambda})_{\lambda\in I})=\lim\limits_{R\to \infty}\lim\limits_{d \to \infty} A_L((P_d(X^R_{\lambda});B_{\lambda})_{\lambda\in I}).
$$
Therefore, to prove Proposition \ref{prop:local-iso}, it suffice to show the following result:
\begin{proposition}\label{prop:local coefficients}
Let $R>0$. If the sequence $(N_m)_{m \in \mathbb{N}}$ is coarsely embeddable into Hilbert space, then the map
$$
e_*:\lim\limits_{d \to \infty}K_*(A_L(P_d(X^R_{\lambda});B_{\lambda})_{\lambda\in I}) \to \lim\limits_{d \to \infty}K_*(A(P_d(X^R_{\lambda});B_{\lambda})_{\lambda\in I})
$$
induced by the evaluation map is an isomorphism.
\end{proposition}

In the rest of this section, we shall prove the above proposition by two steps: we first formulate a twisted version of coarse Baum--Connes conjecture for the sequence $(X^{R}_{m,i})_{m,i}$ using the coarse embedding of the sequence $(N_m)_{m \in \mathbb{N}}$ and prove it by the cutting-and-pasting method, then we reduce the above result to a twisted coarse Baum--Connes conjecture for the sequence $(X^{R}_{m,i})_{m,i}$ by constructing index maps following Section \ref{Sect: Index map}.

Note that for each fixed $R>0$, each $gN_m$ is an $R$-net in the $R$-neighborhood, and the inclusion of $gN_m$ into $B_R(gN_m, G_m)$ is an isometry. It follows that that the sequence $\{x^i_m N_m\}_{(m,i)\in I}$ is coarsely equivalent to the sequence $(B_R(x^i_mN_m, G_m))_{(m,i)\in I}$. As a result, the sequence $\left(X^R_{m,i}=B_R(x^i_mN_m, G_m)\right)_{(m,i)\in I}$ is coarsely embeddable into Hilbert space if the sequence $(N_m)_{m \in \mathbb{N}}$ is coarsely embeddable.

 Since each $x^i_mN_m$ is an $R$-net in $B_R(x^i_mN_m, G_m)$ for each $(m,i)\in I$, we can define a coarsely equivalent map $f_{m,i}: B_R(x^i_mN_m, G_m) \to x^i_mN_m$ such that $d(x,f_{m,i}(x))\leq R$ for all $x \in B_R(x^i_mN_m, G_m)$.  Composing with the isometry $x^i_mN_m\to N_m$ mapping $x^i_mg$ to $g$ for each $g \in N_m$ gives us a map, still denoted by $f_{m,i}: X^R_{m,i}=B_R(x^i_mN_m, G_m)\to N_m$ for each $m$ and each $i$. Moreover, we have a sequence of coarse equivalences $(f_{m,i}:  X^R_{m,i} \to N_m)_{(m,i)\in I}$.

Let $(\psi_m: N_m\to U_m)_{m \in \mathbb{N}}$ be a sequence of coarse embeddings, where each $U_m$ is an even-dimensional Euclidean space. Composing with the coarse equivalences $(f_{m,i}:X^R_{m,i}\to N_m)_{(m,i)\in I}$, we obtain a sequence of coarse embeddings, denoted by
$$
\left(\psi_{m,i}:X_{m,i}^R\to U_m \right)_{(m,i)\in I}.
$$
Let us define
$$
H_{d,B_{m,i},U_m}=\ell^2(X^{d,R}_{m,i})\otimes H \otimes \mathcal{L}^2_{B_{m,i}}\otimes \mathcal{L}^2_{U_m}
$$
where $\mathcal{L}^2_{U_m}$ is the Hilbert space of all the square-integrable functions from $U_m$ to the Hilbert space of complexified Clifford algebra ${\rm Cliff}_{\mathbb{C}}(U_m)$, and $\mathcal{L}^2_{B_{m,i}}$ is the Hilbert space of square-integrable functions from $B_{m,i}$ to the complexified Clifford algebra ${\rm Cliff}_{\mathbb{C}}(V_m)$.

Denote $U_{\lambda}=U_{m}$ when $\lambda=(m,i)$ and denote by $(\psi_{\lambda}:X^R_{\lambda}\to U_{\lambda})_{\lambda\in I}$ the obtained sequence of coarse embeddings. For each $\lambda\in I$ and $d>0$, the Hilbert space $H_{d,B_{\lambda},U_{\lambda}}$ is an ample $P_d(X_{\lambda}^R)$-module, an ample $U_{\lambda}$-module and an ample $B_{\lambda}$-module. Therefore, for each bounded linear operator $T\in B(H_{d,B_{\lambda},U_{\lambda}})$, we can consider $P_d(X^R_{\lambda})$-propagation, $U_{\lambda}$-propagation and $B_{\lambda}$-propagation.

For each $\lambda$, an operator $T\in B(\mathcal{L}^2_{U_{\lambda}})$ gives rise to an operator $T^{U}=(T^U_{x,y})_{x,y\in X^{d,R}_{\lambda}}$ on $H_{\lambda,B_{\lambda},U_{\lambda}}$ by
\begin{equation}\label{constr:spatial operator of subgroups geometry}
T^{U}_{x,y}= \left\{
        \begin{array}{ll}
            {\rm Id}_{\ell^2(X^{d,R}_{\lambda})\otimes H \otimes \mathcal{L}^2_{B_{\lambda}}}\otimes V_{\psi_{\lambda}(x)}TV^*_{\psi_{\lambda}(x)}, & \quad y=x \\
            0 & \mbox{otherwise}.
        \end{array}
    \right.
\end{equation}
For each $R'>0$ and $\lambda\in I$, denote by $\chi_{U_{\lambda},R}$ to be the characteristic function of the closed ball $B_{R'}(0, U_{\lambda})$.

Now we are ready to define the twisted Roe algebras for the sequence $(P_d(X^R_{\lambda}))_{\lambda\in I}$ using the above sequence of coarse embeddings.

For each $R>0$, $d>0$, and $\lambda\in I$, denote by $C^*(P_d(X^R_{\lambda});H_{d,B_{\lambda},U_{\lambda}})$ the Roe algebra of $P_d(X^R_{\lambda})$ with respect to the ample $P_d(X^R_{\lambda})$-module $H_{d,B_{\lambda},U_{\lambda}}$. let $\prod_{\lambda\in I} C_b([1,\infty) \times [1, \infty), C^*(P_d(X^R_{\lambda}); H_{d,B_{\lambda},U_{\lambda}}))$ be the product $C^*$-algebra of all bounded norm-continuous maps from $[1,\infty)\times [1,\infty)$ to the Roe algebra $C^*(P_d(X_{\lambda}); H_{d,B_{\lambda},U_{\lambda}})$ equipped with the supreme norm. Each element of the product $C^*$-algebra can be expressed as a tuple $(T^{\lambda}_{s,s'})_{\lambda\in I, s, s' \in [1,\infty)}$ where for each $\lambda$, $(s,s') \mapsto T^{\lambda}_{s, s'}$ is a bounded continuous function from $[1,\infty)\times [1,\infty)$ to $C^*(P_d(X^R_{\lambda});H_{d,B_{\lambda}, U_{\lambda}})$.

\begin{definition}\label{Def:twisted Roe of subgroups}
	For each $d>0$ and $R>0$, the algebraic twisted Roe algebra $\mathbb{A}((P_d(X^R_{\lambda}); B_{\lambda}, U_{\lambda})_{\lambda\in I})$ is defined to be the $*$-subalgebra of the product $\prod_{\lambda\in I} C_b([1,\infty)\times [1,\infty), C^*(P_d(X_{\lambda}); H_{d,B_{\lambda}, U_{\lambda}}))$ consisting of elements $(T^{\lambda}_{s,s'})_{\lambda\in I, s, s' \in [1,\infty)}$ satisfying the following properties:
\begin{enumerate}[(1)]
		\item $\sup\limits_{\lambda\in I, s, s' \in [1,\infty)} {\rm Prop}_{P_d(X^R_{\lambda})}(T^{\lambda}_{s, s'})<\infty$;
		
		\item $\lim\limits_{s \to \infty}\sup\limits_{\lambda\in I, s'\in [1.\infty)}  {\rm Prop}_{B_{\lambda}}(T^{\lambda}_{s, s'})=0$;
		
		\item $\lim\limits_{s' \to \infty}\sup\limits_{\lambda\in I, s\in [1,\infty)}  {\rm Prop}_{U_{\lambda}}(T^{\lambda}_{s,s'})=0$;
	
		\item $\lim\limits_{R' \to \infty}\sup\limits_{\lambda \in I, s,s' \in[1,\infty)} \|\chi^{U}_{U_{\lambda},R'} T^{\lambda}_{s, s'}-T^{\lambda}_{s, s'}\|=\lim\limits_{R' \to \infty}\sup\limits_{\lambda\in I, s,s' \in[1,\infty)} \|T^{\lambda}_{s, s'}\chi^{U}_{U_{\lambda},R'}-T^{\lambda}_{s,s'}\|=0$.
\end{enumerate}
The twisted Roe algebra $A((P_d(X^R_{\lambda});B_{\lambda}, U_{\lambda})_{\lambda \in I})$ of the sequence $(P_d(X^R_{\lambda}))_{\lambda\in I}$ is defined to be the closure of $\mathbb{A}((P_d(X_{\lambda}); B_{\lambda}, U_{\lambda})_{\lambda \in I})$ in $\prod_{\lambda\in I} C_b([1,\infty)\times [1,\infty), C^*(P_d(X_{\lambda});H_{d,B_{\lambda}, U_{\lambda}})_{\lambda\in I})$.
\end{definition}

We can define the algebraic twisted localization algebra $\mathbb{A}_L((P_d(X_{\lambda}); B_{\lambda}, U_{\lambda}))_{\lambda\in I})$ to be the $*$-algebra of uniformly bounded and uniformly continuous functions $T:[1, \infty) \to A((P_d(X^R_{\lambda}); B_{\lambda}, U_{\lambda})_{\lambda \in I})$ such that
$$
\lim\limits_{t \to \infty} \sup_{\lambda\in I} {\rm Prop}_{P_d(X^R_\lambda)}(T^{\lambda}(t))=0.
$$
The twisted localization algebra $A_L((P_d(X^R_{\lambda}); B_{\lambda}, U_{\lambda}))_{\lambda\in I})$ is the completion of $\mathbb{A}_L((P_d(X^R_{\lambda}); B_{\lambda}, U_{\lambda}))_{\lambda\in I})$ under the norm
$$\|T\|=\sup\|T(t)\|_{A((P_d(X^R_{\lambda}); B_{\lambda}, U_{\lambda}))_{\lambda\in I})}.$$
Naturally, there is a evaluation-at-one map
$$
e: A_L((P_d(X^R_{\lambda}); B_{\lambda}, U_{\lambda}))_{\lambda\in I})\to A((P_d(X^R_{\lambda}); B_{\lambda}, U_{\lambda}))_{\lambda\in I}).
$$
\begin{proposition}\label{prop:twisted CBC for subgroups}
For each $R>0$, the map
$$
e_*:\lim\limits_{d \to \infty} K_*(A_L((P_d(X^R_{\lambda}); B_{\lambda}, U_{\lambda}))_{\lambda\in I}))\to \lim\limits_{d \to \infty}K_*(A((P_d(X^R_{\lambda}); B_{\lambda}, U_{\lambda}))_{\lambda\in I}))
$$
induced by the evaluation map on $K$-theory is an isomorphism.
\end{proposition}

The above result can be proved by the cutting-and-pasting method. We shall sketch the proof in the following.

Let us introduce some notations before the proof. For a sequence of closed subsets $D=(D_{\lambda})_{\lambda \in I}$ with $D_{\lambda}\subseteq U_{\lambda}$, we define
$$
A(P_d(X^R_{\lambda});B_{\lambda},D_{\lambda})=(1\otimes \chi_D)A(P_d(X^R_{\lambda});B_{\lambda},U_{\lambda})(1\otimes \chi_D)
$$
and
$$
A_L(P_d(X^R_{\lambda});B_{\lambda},D_{\lambda})=(1\otimes \chi_D)A_L(P_d(X^R_{\lambda});B_{\lambda},U_{\lambda})(1\otimes \chi_D),
$$
where $1\otimes \chi_D=({\rm Id}_{\ell^2(X^{d,R}_{\lambda})\otimes H\otimes \mathcal{L}^2_{B_{\lambda}}}\otimes \chi_{D_{\lambda}})_{\lambda\in I}$ and each $\chi_{D_{\lambda}}$ is the characteristic function of $D_{\lambda}$ viewed as a projection on $\mathcal{L}^2_{U_{\lambda}}$.

Let $K=(K_{\lambda})_{\lambda\in I}$ be a sequence of closed subsets with each $K_{\lambda}\subseteq P_d(X^R_{\lambda})$. Define
$$
A((K_{\lambda};B_{\lambda},D_{\lambda})_{\lambda\in I})=(\chi_K \otimes 1)A((P_d(X^R_{\lambda});B_{\lambda},U_{\lambda})_{\lambda\in I})(\chi_K\otimes 1)
$$
and
$$
A_L((K_{\lambda};B_{\lambda},D_{\lambda})_{\lambda\in I})=(\chi_K \otimes 1)A_L((P_d(X^R_{\lambda});B_{\lambda},U_{\lambda})_{\lambda\in I})(\chi_K\otimes 1)
$$
where $\chi_K\otimes 1=(\chi_{K_{\lambda}}\otimes {\rm Id}_{H\otimes \mathcal{L}^2_{B_{\lambda}}\otimes \mathcal{L}^2_{U_{\lambda}}})_{\lambda\in I}$.

The following lemma is crucial to the prove Proposition \ref{Def:twisted Roe of subgroups}.
\begin{lemma}\label{lem:subgroup local iso}
Let $r'>0$ and let $D=(D_{\lambda})_{\lambda}$ be a sequence of closed subsets with $D_{\lambda}\subseteq U_{\lambda}$ such that each $D_{\lambda}$ decomposes as a disjoint union $D_{\lambda}=\bigsqcup_{j\in J_{\lambda}} D_{\lambda,j}$ satisfying
$$
D_{\lambda,j}\subseteq B_{r'}(\psi_{\lambda}(x_{\lambda,j}), U_{\lambda})
$$
for some collection $\{x_{\lambda,j}\}_{j\in J_{\lambda}}\subseteq X^R_{\lambda}$ with $d(\psi_{\lambda}(x_{\lambda,j}),\psi_{\lambda}(x_{\lambda,j'}))>2r'$ for all $j\neq j'$. Then the map
$$
e_*:\lim\limits_{d\to \infty}K_*(A_L((P_d(X^R_{\lambda});B_{\lambda},D_{\lambda})_{\lambda})) \to \lim\limits_{d\to \infty}K_*(A((P_d(X^R_{\lambda});B_{\lambda},D_{\lambda})_{\lambda}))
$$
is an isomorphism.
\end{lemma}

To prove Lemma \ref{lem:subgroup local iso}, we need some basic facts in the $K$-theory of $C^*$-algebras. Recall that a $C^*$-algebra $\mathcal{A}$ is said to be quasi-stable if for all $n$ there exists an isometry $v$ in the multiplier algebra of the matrix algebra $M_n(\mathcal{A})$ such that $vv^*$ is the top left matrix unit $e_{11}$.
The following lemma will be useful in the proof of Lemma \ref{lem:subgroup local iso}. The proof can be found in \cite[Lemma 12.4.3]{Willett-Yu-Book}, thus omitted.
\begin{lemma}\label{lem:alg-evaluation-iso}
Let $\mathcal{A}$ be a qusi-stable $C^*$-algebras and let $C_b([1,\infty), \mathcal{A})$ be the $C^*$-algebras of all uniformly bounded and uniformly norm-continuous functions from $[1,\infty)$ to $\mathcal{A}$. Then the evaluation map
$$C_b\left([1,\infty),\mathcal{A}\right)\to  \mathcal{A},~~~~(f(t))_{t \in [1, \infty)} \mapsto f(1)$$
 induces an isomorphism on $K$-theory.
\end{lemma}

\begin{proof}[Proof of Lemma \ref{lem:subgroup local iso}]
Denote by $H_{d,B_{\lambda},D_{\lambda,j}}=({\rm Id}_{\ell^2(X^{d,R}_{\lambda})\otimes H \otimes \mathcal{L}_{B_{\lambda}}}\otimes \chi_{D_{\lambda,j}})H_{d,B_{\lambda},U_{\lambda}}$ for each $\lambda\in I$ and each $j \in J_{\lambda}$. Let $\mathbb{A}((P_d(X^R_{\lambda});B_{\lambda}, D_{\lambda,j})_{\lambda\in I,j\in J_{\lambda}})$ be the $*$-subalgebra of the product
$$\prod_{\lambda\in I, j
\in J_{\lambda}}C_b([1, \infty), C^*(P_d(X^R_{\lambda});H_{d,B_{\lambda}, D_{\lambda,j}}))$$ consisting of all tuples $(T_{s,s'}^{\lambda,j})_{\lambda\in I, j\in J_{\lambda}, s,s'\in [\infty)}$ such that \begin{itemize}
    	\item $\sup\limits_{\lambda\in I,j \in J_{\lambda}, s, s' \in [1,\infty)} {\rm Prop}_{P_d(X^R_{\lambda})}(T^{\lambda,j}_{s, s'})<\infty$;
		
		\item $\lim\limits_{s \to \infty}\sup\limits_{\lambda\in I,j \in J_{\lambda}, s'\in [1.\infty)}  {\rm Prop}_{B_{\lambda}}(T^{\lambda,j}_{s, s'})=0$;
		
		\item $\lim\limits_{s' \to \infty}\sup\limits_{\lambda\in I, s\in [1,\infty)}  {\rm Prop}_{U_{\lambda}}(T^{\lambda,j_{\lambda}}_{s,s'})=0$;
	
		\item $\lim\limits_{R' \to \infty}\sup\limits_{\lambda \in I,j \in J_{\lambda}, s,s' \in[1,\infty)} \|\chi^{U}_{U_{\lambda},R'} T^{\lambda,j}_{s, s'}-T^{\lambda,j}_{s, s'}\|=\lim\limits_{R' \to \infty}\sup\limits_{\lambda\in I,j \in J_{\lambda}, s,s' \in[1,\infty)} \|T^{\lambda,j}_{s, s'}\chi^{U}_{U_{\lambda},R'}-T^{\lambda,j}_{s,s'}\|=0$.
\end{itemize}
Let $A((P_d(X^R_{\lambda});B_{\lambda}, D_{\lambda,j})_{\lambda\in I,j\in J_{\lambda}})$ be the completion of $\mathbb{A}((P_d(X^R_{\lambda});B_{\lambda}, D_{\lambda,j})_{\lambda\in I,j\in J_{\lambda}})$ under the operator norm on $\oplus_{\lambda\in I}H_{d,B_{\lambda},U_{\lambda}}$. Similarly, we can define the localized algebra, $A_L((P_d(X^R_{\lambda});B_{\lambda}, D_{\lambda,j})_{\lambda\in I,j\in J_{\lambda}})$, to be $C^*$-algebra generated by the uniformly bounded and uniformly continuous paths $(t \mapsto T(t))$ from $[1,\infty) $ to $\mathbb{A}((P_d(X^R_{\lambda});B_{\lambda}, D_{\lambda,j})_{\lambda\in I,j\in J_{\lambda}})$ satisfying ${\rm Prop}_{P_d(X^R_{\lambda}}(T(t))\to 0$ as $t \to \infty$.

Let $R'>0$ be any positive number. Consider the sequence of closed balls $B_{R'}=(B_{R'}(x_{\lambda,j}))_{\lambda\in I, j \in J_{\lambda}}$ where each $B_{R'}(x_{\lambda,j})$ is the ball of radius $R'$ centered at $x_{\lambda,j}$ in $X^R_{\lambda}$. We define
$$
A((P_d(B_{R'}(x_{\lambda,j});B_{\lambda}, D_{\lambda,j})_{\lambda\in I,j\in J_{\lambda}})=
(\chi_{B_{R'}}\otimes 1)A((P_d(X^R_{\lambda});B_{\lambda}, D_{\lambda,j})_{\lambda\in I,j\in J_{\lambda}})(\chi_{B_{R'}}\otimes 1)
$$
and
$$
A_L((P_d(B_{R'}(x_{\lambda,j});B_{\lambda}, D_{\lambda,j})_{\lambda\in I,j\in J_{\lambda}})=
(\chi_{B_{R'}}\otimes 1)A_L((P_d(X^R_{\lambda});B_{\lambda}, D_{\lambda,j})_{\lambda\in I,j\in J_{\lambda}})(\chi_{B_{R'}}\otimes 1)
$$
where $\chi_{B_{R'}}\otimes 1=(\chi_{B_{R'}(x_{\lambda,j})}\otimes {\rm Id}_{H\otimes \mathcal{L}^2_{B_{\lambda}}\otimes \mathcal{L}^2_{U_{\lambda}}})_{\lambda\in I, j \in J_{\lambda}}$ is a sequence of projections.

By the definition of twisted Roe algebras, we have that
$$
A_L((P_d(X^R_{\lambda});B_{\lambda}, D_{\lambda,j})_{\lambda\in I,j\in J_{\lambda}})=\lim\limits_{R'\to \infty}A_L((P_d(B_{R'}(x_{\lambda,j});B_{\lambda}, D_{\lambda,j})_{\lambda\in I,j\in J_{\lambda}})
$$
and
$$
A_L((P_d(X^R_{\lambda});B_{\lambda}, D_{\lambda,j})_{\lambda\in I,j\in J_{\lambda}})=\lim\limits_{R'\to \infty}A_L((P_d(B_{R'}(x_{\lambda,j});B_{\lambda}, D_{\lambda,j})_{\lambda\in I,j\in J_{\lambda}}).
$$
It suffices to show that
$$
e_*: \lim\limits_{d \to \infty} A_L((P_d(B_{R'}(x_{\lambda,j});B_{\lambda}, D_{\lambda,j})_{\lambda\in I,j\in J_{\lambda}}) \to \lim\limits_{d \to \infty} A((P_d(B_{R'}(x_{\lambda,j});B_{\lambda}, D_{\lambda,j})_{\lambda\in I,j\in J_{\lambda}})
$$
is an isomorphism for each fixed $R'>0$.
Since the sequence of metric spaces $(X^R_{\lambda})_{\lambda \in I}$ has bounded geometry, the dimensions of the sequence of simplices $(B_{R'}(x_{\lambda,j}))_{\lambda\in I,j \in J_{\lambda}}$ are uniformly bounded.
Therefore the inclusions $(x_{\lambda,j} \to P_d(B_{R'}(x_{\lambda,j})))_{\lambda\in I,j \in J_{\lambda}}$ is a sequence of strongly Lipschitz homotopy equivalences. As a result, this sequence of inclusions induces isomorphisms on the $K$-theory of both localization algebras and the Roe algebras. It is easy to check that the following  diagram commutes:
\begin{equation*}
    \begin{tikzcd}
    K_*(A_L((P_d(B_{R'}(x_{\lambda,j});B_{\lambda}, D_{\lambda,j})_{\lambda\in I,j\in J_{\lambda}})) \arrow[r,"e_*"]&  K_*(A((P_d(B_{R'}(x_{\lambda,j});B_{\lambda}, D_{\lambda,j})_{\lambda\in I,j\in J_{\lambda}}))\\
    K_*( A_L((x_{\lambda,j};B_{\lambda}, D_{\lambda,j})_{\lambda\in I,j\in J_{\lambda}})) \ar[u,"\cong"]\arrow[r,"e_*"]& K_*(A((x_{\lambda,j};B_{\lambda}, D_{\lambda,j})_{\lambda\in I,j\in J_{\lambda}}))\ar[u,"\cong"].
    \end{tikzcd}
\end{equation*}
where the vertical maps are isomorphisms for any suitable large $d$. Thus, it suffices to show that the map
$$
e_*:K_*(A_L((x_{\lambda,j};B_{\lambda}, D_{\lambda,j})_{\lambda\in I,j\in J_{\lambda}}))\to K_*(A((x_{\lambda,j};B_{\lambda}, D_{\lambda,j})_{\lambda\in I,j\in J_{\lambda}}))
$$
induced by the evaluation map on $K$-theory is isomorphism. Since the condition that the propagation on the sequence of points $(x_{\lambda, j})_{\lambda,j\in J_{\lambda}}$ defining the localization version on the left is vacuous, the algebra on the left-hand side $A_L((x_{\lambda,j};B_{\lambda}, D_{\lambda,j})_{\lambda\in I,j\in J_{\lambda}})$ is the $C^*$-algebra of all uniformly continuous and uniformly bounded functions from $[1,\infty)$ to the algebras on the right, $A((x_{\lambda,j};B_{\lambda}, D_{\lambda,j})_{\lambda\in I,j\in J_{\lambda}})$. By Lemma \ref{lem:alg-evaluation-iso}, the evaluation-at-one map induces an isomorphism on $K$-theory. This finishes the proof.

\end{proof}
Now, we are ready to apply the Mayer--Viotoris sequence and the five lemma to prove Proposition \ref{prop:twisted CBC for subgroups}.

\begin{proof}[Proof of Proposition \ref{prop:twisted CBC for subgroups}.]
For each $r'>0$, let $B_{\lambda}(r')=\bigcup_{x \in X^R_{\lambda}} B_{r'}(\psi_{\lambda}(x), U_{\lambda})$. By Condition $(4)$ of Definition \ref{Def:twisted Roe of subgroups}, we have that
$$
A((P_d(X^R_{\lambda});B_{\lambda},U_{\lambda})_{\lambda \in I})=\lim\limits_{r'\to \infty}A((P_d(X^R_{\lambda});B_{\lambda},B_{\lambda}(r'))_{\lambda\in I})
$$
and
$$
A_L((P_d(X^R_{\lambda});B_{\lambda},U_{\lambda})_{\lambda\in I})=\lim\limits_{r'\to \infty}A_L((P_d(X^R_{\lambda});B_{\lambda},B_{\lambda}(r')))_{\lambda\in I}).
$$
Therefore, it suffices to show that
$$e_*:\lim\limits_{d\to \infty}K_*(A_L((P_d(X^R_{\lambda});B_{\lambda},B_{\lambda}(r'))_{\lambda\in I}))\to \lim\limits_{d\to \infty}K_*(A((P_d(X^R_{\lambda});B_{\lambda},B_{\lambda}(r'))_{\lambda\in I}))$$
is an isomorphism for any $r'>0$.

For any fixed $r'>0$, since the sequence $\{X^R_{\lambda}\}_{\lambda\in I}$ has bounded geometry and
 $(\psi_{\lambda}: X^R_{\lambda}\to U_{\lambda})_{\lambda\in I}$ is a sequence of coarse embeddings, there exists a positive integer $M$ such that each $X^R_{\lambda}$ decomposes as a disjoint union $X^R_{\lambda}=\bigsqcup^M_{i=1} X_{\lambda,i}$ such that
 \begin{center}
 $B_{r'}(\psi_{\lambda}(x), U_{\lambda})\cap B_{r'}(\psi_{\lambda}(x'), U_{\lambda})=\emptyset$, if $x,x'\in X_{\lambda, i}$ with $x\neq x'$, for all $1\leq i \leq M$ and $\lambda \in I$.
\end{center}
Denote by $D_{\lambda, i}(r')=\bigsqcup_{x \in X_{\lambda,i}} B_{r'}(\varphi_{\lambda}(x), U_{\lambda})$ for each $1\leq i\leq M$.
By Lemma \ref{lem:subgroup local iso}, we know that the map
$$e_*:\lim\limits_{d\to \infty}K_*(A_L((P_d(X^R_{\lambda});B_{\lambda},D_{\lambda,i}(r'))_{\lambda\in I}))\to \lim\limits_{d\to \infty}K_*(A((P_d(X^R_{\lambda});B_{\lambda, i},D_{\lambda,i}(r'))_{\lambda\in I}))$$
is an isomorphism for each $1\leq i\leq M$ and each $r'>0$. It follows from the five lemma and the six-term exact sequence in $K$-theory that the map
$$e_*:\lim\limits_{d\to \infty}K_*(A_L((P_d(X^R_{\lambda});B_{\lambda},B_{\lambda}(r'))_{\lambda\in I}))\to \lim\limits_{d\to \infty}K_*(A((P_d(X^R_{\lambda});B_{\lambda},B_{\lambda}(r'))_{\lambda\in I}))$$
induced by the evaluation-at-one map is an isomorphism for each $r'$. This finshes the proof.
\end{proof}

Now, we still need to construct an index map which enables us to reduce Proposition \ref{prop:local coefficients} to Proposition \ref{prop:twisted CBC for subgroups}.  We shall follow the steps of the constructions in Section \ref{Sect: Index map} to construct the index maps
$${\rm Ind}_{F}: K_*(A((P_d(X^R_{\lambda}); B_{\lambda})_{\lambda\in I}))\to K_*(A((P_d(X^R_{\lambda});B_{\lambda}, U_{\lambda})_{\lambda\in I}))$$
and
$${\rm Ind}^L_{F}: K_*(A_L((P_d(X_{\lambda}); B_{\lambda})_{\lambda\in I}))\to K_*(A_L((P_d(X_{\lambda}); B_{\lambda},U_{\lambda})_{\lambda\in I}))$$
for each $d>0$.

Let us now sketch the construction of index maps in the following. For each $\lambda \in I$, we denote the Bott and Dirac map on $\mathcal{L}^2_{U_{\lambda}}$ by $D^{\lambda}$ and $C^{\lambda}$, respectively. Let $B^{\lambda}_{s',x}=(s')^{-1}D^{\lambda}+C^{\lambda}-V_{x}$ be Bott--Dirac map associated to $(s',x)$ for $s' \in [1, \infty)$ and $x \in U_{\lambda}$.
Define the operator
$$F^{\lambda}_{s',x}=B^{\lambda}_{s',x}(1+(B^{\lambda}_{s',x})^2)^{-1/2}
$$
on $\mathcal{L}^2_{U_{\lambda}}$. It is an unbounded and self-adjoint operator.
We define a sequence of operators, denoted by $F_{s'}=(F^{\lambda}_{s'})_{\lambda\in I, s'\in [1,\infty)}$, where for each $\lambda$, $F^{\lambda}_{s'}=F^{\lambda,U}_{s'+2{\rm dim}(U_{\lambda
}),0}: H_{d,B_{\lambda},U_{\lambda}}\to H_{d,B_{\lambda},U_{\lambda}}$ is the operator defined as in Construction (\ref{constr:spatial operator of subgroups geometry}). For each $\lambda\in I$, let $F^{\lambda}: L^2([1, \infty),H_{d,B_{\lambda},U_{\lambda}}) \to L^2([1, \infty),H_{d,B_{\lambda},U_{\lambda}})$ be the operator defined by $(F^{\lambda}u)(s'):=F^{\lambda}_{s'}u(s')$ for all $u\in L^2([1, \infty),H_{d,B_{\lambda},U_{\lambda}})$.

Following the arguments in Lemma \ref{lem: F as self-adjoint, odd in multiplier}, the operator $F$ is a self-adjoint, norm one, odd operator in
the multiplier algebra of $A((P_d(X^R_{\lambda});B_{\lambda},U_{\lambda})_{\lambda\in I})$.

Represented on $\oplus_{\lambda}L^2([1, \infty),H_{d,B_{\lambda},U_{\lambda}})$ via the amplification of the identity representation on $H_{d,B_{\lambda}}$, the twisted Roe algebra $A((P_d(X^R_{\lambda}); B_{\lambda})_{\lambda
\in I})$ is a subalgebra of the multiplier algebra of the twisted Roe algebra $A((P_d(X^R_{\lambda});B_{\lambda},U_{\lambda})_{\lambda\in I})$ by the same arguments in Lemma \ref{lem:twisted Roe subalgebra}.

Following Lemma \ref{lem:projections in multiplier}, we also have that for each projection  $p=(p^{\lambda})_{\lambda \in I} \in A((P_d(X^R_{\lambda}); B_{\lambda})_{\lambda
\in I})$, the function
$$p \mapsto (pFp)^2-p$$
is in the corner $pA((P_d(X^R_{\lambda});B_{\lambda},U_{\lambda})_{\lambda\in I})p$.
The results in the case of localization algebras can be proved using the arguments pointwise.
By the construction of index maps in $K$-theory, we can define the index maps
$${\rm Ind}_{F}: K_*(A((P_d(X^R_{\lambda}); B_{\lambda})_{\lambda\in I}))\to K_*(A((P_d(X^R_{\lambda});B_{\lambda}, U_{\lambda})_{\lambda\in I}))$$
and
$${\rm Ind}^L_{F}: K_*(A_L((P_d(X_{\lambda}); B_{\lambda})_{\lambda\in I}))\to K_*(A_L((P_d(X_{\lambda}); B_{\lambda},U_{\lambda})_{\lambda\in I}))$$
for each $d>0$.

To study the index maps, we need the twisted Roe algebras in the following. Let $A^*((P_d(X^R_{\lambda});B_{\lambda})_{\lambda\in I})$ be the $C^*$-subalgebra of $\prod_{\lambda\in I} C_b([1,\infty),C^*(P_d(X^R_{\lambda}); H_{d,B_{\lambda},U_{\lambda}}))$ generated by the paths $T=(T^{\lambda}_s)_{s\in[1,\infty), \lambda\in I}$ satisfying:
\begin{enumerate}[(1)]
    \item $\sup_{\lambda\in I, s\in [1,\infty)} {\rm Prop}_{P_d(X^R_{\lambda})}(T^{\lambda}_s)<\infty$;
    \item $\lim\limits_{s \to \infty}\sup_{\lambda\in I} {\rm Prop}_{B_{\lambda}}(T^{\lambda}_s)=0$.
\end{enumerate}
We can also define the twisted localization algebra, $A_L^*((P_d(X^R_{\lambda});B_{\lambda})_{\lambda\in I})$, to be the $C^*$-algebra of all the uniformly bounded and uniformly continuous paths $(t \mapsto T(t)=(T^{\lambda}_s(t))_{s\in[1,\infty), \lambda\in I})$ on the interval $[1,\infty)$ such that
$$
\lim\limits_{t \to \infty} \sup_{\lambda\in I, s\in [1,\infty)} {\rm Prop}_{P_d(X^R_{\lambda})}(T_s^{\lambda}(t))=0.
$$
There is a natural evaluation-at-one map
$$e: A_L^*((P_d(X^R_{\lambda});B_{\lambda})_{\lambda\in I}) \to A^*((P_d(X^R_{\lambda});B_{\lambda})_{\lambda\in I}).$$
For any fixed $s'_0$, consider the evaluation maps:
$$
\imath^{s'_0}:A((P_d(X^R_{\lambda});B_{\lambda}, U_{\lambda})_{\lambda\in I}) \to A^*((P_d(X^R_{\lambda}); B_{\lambda})_{\lambda\in I})
$$
by
$$
(T_{s,s'}^{\lambda})_{s,s'\in [1,\infty),\lambda\in I} \mapsto (T_{s,s'_0}^{\lambda})_{s \in [1,\infty),\lambda\in I}
$$
and
$$
\imath^{s'_0}_{L}:A_L((P_d(X^R_{\lambda});B_{\lambda}, U_{\lambda})_{\lambda\in I}) \to A_L^*((P_d(X^R_{\lambda});  B_{\lambda})_{\lambda\in I})
$$
by
$$
(T_{s,s'}^{\lambda}(t))_{s,s',t\in [1,\infty),\lambda\in I} \mapsto (T_{s,s'_0}^{\lambda}(t))_{s,t\in [1,\infty),\lambda\in I}.
$$
Following from the proof of Proposition \ref{prop:index map iso}, we have that the compositions $\imath^{s'_0}_*\circ {\rm Ind}_F$ and $\imath^{s'_0}_{L,*}\circ {\rm Ind}^L_F$ are isomorphisms, where $\imath^{s'_0}_*$ and $\imath^{s'_0}_{*}$ are the maps induced by the evaluation maps on $K$-theory.

Moreover, we have the following commutative diagram:
\begin{equation*}\label{diag:diagram of twisted Roe algebras}
\begin{tikzcd}
    K_*(A_{L}(P_d(X^R_{\lambda});B_{\lambda})_{\lambda\in I}))\ar[d, "{\rm Ind}^L_F"]\ar[r,"e_*"]& K_*(A(P_d(X^R_{\lambda});B_{\lambda})_{\lambda\in I}))\ar[d,"{\rm Ind}_F"]\\
    K_*(A_L(P_d(G_m); B_{\lambda},U_{\lambda})_{\lambda\in I}))\ar[r,"e_*"]\ar[d,"\imath^{s'_0}_{L,*}"]& K_*(A(P_d(G_m); B_{\lambda},U_{\lambda})_{\lambda\in I}))\ar[d, "\imath^{s'_0}_*"]\\
    K_*(A^*_{L}(P_d(X^R_{\lambda});B_{\lambda})_{\lambda\in I}))\ar[r,"e_*"]&K_*(A^*(P_d(X^R_{\lambda});B_{\lambda})_{\lambda\in I})).
\end{tikzcd}
\end{equation*}
Since the horizontal map in the middle and the vertical composition maps are isomorphisms when $d$ passing to infinity, Proposition \ref{prop:local coefficients} follows from the diagram chasing.

\section{Proof of the main theorem}

In this section, we shall prove Theorem \ref{theorem-CBC-for-sequence}, and the main result Theorem \ref{thm:main result} follows from Theorem \ref{theorem-CBC-for-sequence}.
\begin{proof}[Proof of \ref{theorem-CBC-for-sequence}.]
For a fixed $s>0$, we have the following commutative diagram:
\begin{equation*}
\begin{tikzcd}
    \lim\limits_{d \to \infty}K_*(C^*_{u,L}(P_d(G_m);H_{d,m})_{m \in \mathbb{N}}))\ar[d, "{\rm Ind}^L_F"]\ar[r,"e^1_*"]& \lim\limits_{d \to \infty}K_*(C^*_{u}(P_d(G_m);H_{d, m})_m))\ar[d,"{\rm Ind}_F"]\\
    \lim\limits_{d \to \infty}K_*(A_L(P_d(G_m); V_m)_{m \in \mathbb{N}}))\ar[r,"e^2_*"]\ar[d,"\imath^{s}_{L,*}"]& \lim\limits_{d \to \infty}K_*(A(P_d(G_m); V_m)_{m \in \mathbb{N}}))\ar[d, "\imath^s_*"]\\
    \lim\limits_{d \to \infty}K_*(C_{u,L}((P_d(G_m);H_{d,V_m})_{m \in \mathbb{N}}))\ar[r,"e^3_*"]& \lim\limits_{d \to \infty}K_*(C_{L}((P_d(G_m);H_{d,V_m})_{m \in \mathbb{N}})),
\end{tikzcd}
\end{equation*}
where we use $e^1_*$, $e^2_*$ and $e^3_*$ to distinguish the maps induced by the evaluation-at-one maps on $K$-theory.
By Proposition \ref{prop:index map iso}, we know that the vertical compositions are isomorphisms. By Proposition \ref{prop:twistedCBC}, we know that
the map
$$e^2_*:\lim\limits_{d \to \infty}K_*(A_L(P_d(G_m); V_m)_{m \in \mathbb{N}}))\to \lim\limits_{d \to \infty}K_*(A(P_d(G_m); V_m)_{m \in \mathbb{N}}))
$$
is an isomorphism.

Now, let us prove that the map
$$
 e^1_*: \lim\limits_{d \to \infty}K_*(C^*_{u,L}(P_d(G_m);H_{d,m})_{m \in \mathbb{N}}))\to \lim\limits_{d \to \infty}K_*(C^*_{u}(P_d(G_m);H_{d, m})_{m\in \mathbb{N}}))
$$
is an isomorphism.

Let $x\in \ker(e^1_*)$. Then $e^2_* \circ {\rm Ind}_F^{L} (x)={\rm Ind}_F \circ e^1_* (x)=0$. By the injectivity of $e^2_*$ in the middle, we have that ${\rm Ind}_F^{L} (x)=0$ which implies that $\imath^s_{L,*}\circ{\rm Ind}_F^L(x) =0$. In addition, $\imath^s_{L,*}\circ{\rm Ind}_F^L$ is an isomorphism, therefore $x=0$. Thus, $e^1_*$ is injective.

Let $y\in K_*(C^*_{u}(P_d(G_m);H_{d, m})_{m\in \mathbb{N}}))$. Since the map $e^2_*$ in the middle is an isomorphism, there exists $z \in \lim\limits_{d \to \infty} K_*(A_L(P_d(G_m); V_m)_{m \in \mathbb{N}}))$ such that $e^2_*(z)={\rm Ind}_F(y)$. Since the composition $\imath^s_{L,*}\circ {\rm Ind}_F^L$ is an isomorphism, there exists a unique $z'\in K_*(C^*_{u,L}(P_d(G_m);H_{d, m})_{m\in \mathbb{N}}))$ such that
$$
\imath^s_{L,*}\circ {\rm Ind}^L_F(z')=\imath^s_{L,*}(z).
$$
By the commutativity of the diagram and the injectivity of the composition $\imath^s_{*}\circ {\rm Ind}_F$, we know that $e^1_*(z')=y$. Thus, $e^1_*$ is surjective. This finishes the proof.
\end{proof}
\subsection*{Acknowledgement} The authors would like to thank Rufus Willett for helpful communications. The authors also thank the anonymous referees
whose comments and suggestions helped to improve the paper. The first author would like to thank Professor Matthew Kennedy for his support.


%

\vskip 1.5cm

\begin{itemize}
\item[] Jintao Deng \\
Department of Mathematics, University of Waterloo, Waterloo, ON, Canada. \\
E-mail: jintao.deng@uwaterloo.ca

\item[] Qin Wang \\
Research Center for Operator Algebras,  and Shanghai Key Laboratory of Pure Mathematics and Mathematical Practice, School of Mathematical Sciences, East China Normal University, Shanghai, 200241, P. R. China.\\
E-mail: qwang@math.ecnu.edu.cn

\item[] Guoliang Yu \\
Department of Mathematics, Texas A\&M University, College Station, TX 77843-3368, USA.\\
E-mail: guoliangyu@math.tamu.edu

\end{itemize}

\end{document}